\providecommand{\U}[1]{\protect\rule{.1in}{.1in}}
\theoremstyle{break}
\newtheorem{theorem}{Theorem}[section]
\newtheorem{corollary}[theorem]{Corollary}
\newtheorem{lemma}[theorem]{Lemma}
\newtheorem{proposition}[theorem]{Proposition}
\newtheorem{remark}[theorem]{Remark}
\newenvironment{proof}[1][Proof]{\noindent\textbf{#1.} }{\ \rule{0.5em}{0.5em}}
\numberwithin{equation}{section}
\begin{document}

\title{An iterative construction of solutions of the TAP equations for the
Sherrington-Kirkpatrick model}
\author{Erwin Bolthausen\thanks{Institute of Mathematics, University of Z\"{u}rich,
e-mail: eb@math.uzh.ch} \thanks{Supported by an SNF grant No 200020-125247,
and by the Humboldt Society.}
\and Universit\"{a}t Z\"{u}rich}
\date{}
\maketitle

\section{Introduction}

The TAP equations \cite{TAP} for the Sherrington-Kirkpatrick model describe
the quenched expectations of the spin variables in a large system.

The standard SK-model has the random Hamiltonian on $\Sigma_{N}\overset
{\mathrm{def}}{=}\left\{  -1,1\right\}  ^{N},\ N\in\mathbb{N},$%
\[
H_{N,\beta,h,\omega}\left(  \mathbf{\sigma}\right)  \overset{\mathrm{def}}%
{=}-\beta\sum_{1\leq i<j\leq N}g_{ij}^{\left(  N\right)  }\left(
\omega\right)  \sigma_{i}\sigma_{j}-h\sum_{i=1}^{N}\sigma_{i},
\]
where $\mathbf{\sigma}=\left(  \sigma_{1},\ldots,\sigma_{N}\right)  \in
\Sigma_{N},$ $\beta>0,\ h\geq0,$ and where the $g_{ij}^{\left(  N\right)  }$,
$1\leq i<j\leq N,$ are i.i.d. centered Gaussian random variables with variance
$1/N,$ defined on a probability space $\left(  \Omega,\mathcal{F}%
,\mathbb{P}\right)  .$ We extend this matrix to a symmetric one, by putting
$g_{ij}\overset{\mathrm{def}}{=}g_{ji}$ for $i>j,$ and $g_{ii}\overset
{\mathrm{def}}{=}0.$ The quenched Gibbs measure on $\Sigma_{N}$ is%
\[
\frac{1}{Z_{N,\beta,h,\omega}}\exp\left[  -H_{N,\beta,h,\omega}\left(
\mathbf{\sigma}\right)  \right]  ,
\]
where $Z_{N,\beta,h,\omega}\overset{\mathrm{def}}{=}\sum_{\mathbf{\sigma}}%
\exp\left[  -H_{N,\beta,h,\omega}\left(  \mathbf{\sigma}\right)  \right]  .$

We write $\left\langle \cdot\right\rangle _{N,\beta,h,\omega}$ for the
expectation under this measure. We will often drop the indices $N,\beta
,h,\omega$ if there is no danger of confusion. We set%
\[
m_{i}\overset{\mathrm{def}}{=}\left\langle \sigma_{i}\right\rangle .
\]

The TAP equations state that%
\begin{equation}
m_{i}=\tanh\left(  h+\beta\sum\nolimits_{j=1}^{N}g_{ij}m_{j}-\beta^{2}\left(
1-q\right)  m_{i}\right)  ,\label{TAP}%
\end{equation}
which have to be understood in a limiting sense, as $N\rightarrow\infty.$
$q=q\left(  \beta,h\right)  $ is the solution of the equation%
\begin{equation}
q=\int\mathrm{tanh}^{2}\left(  h+\beta\sqrt{q}z\right)  \phi\left(  dz\right)
,\label{fixed_point}%
\end{equation}
where $\phi\left(  dz\right)  $ is the standard normal distribution. It is
known that this equation has a unique solution $q>0$ for $h>0$ (see
\cite{TalaBuch} Proposition 1.3.8)$.$ If $h=0,$ then $q=0$ is the unique
solution if $\beta\leq1,$ and there are two other (symmetric) solutions when
$\beta>1,$ which are supposed to be the relevant ones. Mathematically, the
validity of the TAP equations has only been proved in the high temperature
case, i.e. when $\beta$ is small, although in the physics literature, it is
claimed that they are valid also at low temperature, but there they have many
solutions, and the Gibbs expectation has to be taken inside \textquotedblleft
pure states\textquotedblright. For the best mathematical results, see
\cite{TalaBuch} Chap. 1.7.

The appearance of the so-called Onsager term $\beta^{2}\left(  1-q\right)
m_{i}$ is easy to understand. From standard mean-field theory, one would
expect an equation%
\[
m_{i}=\tanh\left(  h+\beta\sum\nolimits_{j=1}^{N}g_{ij}m_{j}\right)  ,
\]
but one has to take into account the stochastic dependence between the random
variables $m_{j}$ and $g_{ij}.$ In fact, it turns out that the above equation
should be correct when one replaces $m_{j}$ by $m_{j}^{\left(  i\right)  }$
where the latter is computed under a Gibbs average dropping the interactions
with the spin $i.$ Therefore $m_{j}^{\left(  i\right)  }$ is independent of
the $g_{ik},\ 1\leq k\leq N,$ and one would get%
\begin{equation}
m_{i}=\tanh\left(  h+\beta\sum\nolimits_{j=1}^{N}g_{ij}m_{j}^{\left(
i\right)  }\right)  .\label{Stratonovich}%
\end{equation}
The Onsager term is an It\^{o}-type correction expanding the dependency of
$m_{j}$ on $g_{ji}=g_{ij},$ and replacing $m_{j}^{\left(  i\right)  }$ on the
right hand side by $m_{j}.$ The correction term is non-vanishing because of%
\[
\sum_{j}g_{ij}^{2}\approx1,
\]
i.e. exactly for the same reason as in the It\^{o}-correction in stochastic
calculus. We omit the details which are explained in \cite{MePaVi}.

In the present paper, there are no results about SK itself. We introduce an
iterative approximation scheme for solutions of the TAP equations which is
shown to converge below and at the de Almayda-Thouless line, i.e. under
condition (\ref{AT}) below (see \cite{AT}). This line is supposed to separate
the high-temperature region from the low-temperature one, but although the
full Parisi formula for the free energy of the SK-model has been proved by
Talagrand \cite{TalaAM}, there is no proof yet that the AT line is the correct
phase separation line.

The iterative scheme we propose reveals, we believe, an interesting structure
of the dependence of the $m_{i}$ on the family $\left\{  g_{ij}\right\}  ,$
even below the AT line. The main technical result, Proposition
\ref{Prop_Mhat_main} is proved at all temperatures, but beyond the AT-line, it
does not give much information.

We finish the section by introducing some notations.

If $\mathbf{x,\mathbf{y}}\in\mathbb{R}^{N},$ we write%
\[
\left\langle \mathbf{x},\mathbf{y}\right\rangle \overset{\mathrm{def}}{=}%
\frac{1}{N}\sum_{i=1}^{N}x_{i}y_{i},\ \left\Vert \mathbf{x}\right\Vert
\overset{\mathrm{def}}{=}\sqrt{\left\langle \mathbf{x},\mathbf{x}\right\rangle
}.
\]
As mentioned above, we suppress $N$ in notations as far as possible, but this
parameter is present everywhere.

We also define the $N\times N$-matrix $\mathbf{x}\otimes_{\mathrm{s}%
}\mathbf{y}$,\ $\mathbf{x}\otimes\mathbf{y,}$ by%
\begin{equation}
\left(  \mathbf{x}\otimes_{\mathrm{s}}\mathbf{y}\right)  _{i,j}\overset
{\mathrm{def}}{=}\frac{1}{N}\left(  x_{i}y_{j}+x_{j}y_{i}\right)
,\ \mathbf{x}\otimes\mathbf{y}\overset{\mathrm{def}}{=}\frac{x_{i}y_{j}}%
{N}.\label{Def_tens}%
\end{equation}

If $\mathbf{A}$ is an $N\times N$-matrix and $\mathbf{x}\in\mathbb{R}^{N},$
the vector $\mathbf{Ax}$ is defined in the usual way (interpreting vectors in
$\mathbb{R}^{N}$ as column matrices). If $f:\mathbb{R\rightarrow R}$ is a
function and $\mathbf{x}\in\mathbb{R}^{N}$ we simply write $f\left(
\mathbf{x}\right)  $ for the vector obtained by applying $f$ to the coordinates.

$\mathbf{g}=\left(  g_{ij}\right)  $ is a Gaussian $N\times N$-matrix where
the $g_{ij}$ for $i<j$ are independent centered Gaussians with variance $1/N$,
and where $g_{ij}=g_{ji},\ g_{ii}=0.$ We will exclusively reserve the notation
$\mathbf{g}$ for such a Gaussian matrix.

We will use $Z,Z^{\prime},Z_{1},Z_{2},\ldots$ as generic standard Gaussians.
Whenever several of them appear in the same formula, they are assumed to be
independent, without special mentioning. We then write $E$ when taking
expectations with respect to them. (This notation is simply an outflow of the
abhorrence probabilists have of using integral signs, as John Westwater once
put it).

If $\left\{  X_{N}\right\}  ,\left\{  Y_{N}\right\}  $ are two sequences of
real random variables, defined on $\left(  \Omega,\mathcal{F},\mathbb{P}%
\right)  $, we write%
\[
X_{N}\simeq Y_{N}%
\]
provided there exists a constant $C>0$ such that%
\[
\mathbb{P}\left(  \left\vert X_{N}-Y_{N}\right\vert \geq t\right)  \leq
C\exp\left[  -t^{2}N/C\right]
\]
for all $N\in\mathbb{N},\ 0<t\leq1.$

Clearly, if $X_{N}\simeq Y_{N},$ and $X_{N}^{\prime}\simeq Y_{N}^{\prime}$,
then $X_{N}+X_{N}^{\prime}\simeq Y_{N}+Y_{N}^{\prime}.$

If $\mathbf{X}^{\left(  N\right)  }=\left(  X_{i}^{\left(  N\right)  }\right)
_{i\leq N},\ \mathbf{Y}^{\left(  N\right)  }=\left(  Y_{i}^{\left(  N\right)
}\right)  _{i\leq N}$ are two sequences of random vectors in $\mathbb{R}^{N},$
we write $\mathbf{X}^{\left(  N\right)  }\approx\mathbf{Y}^{\left(  N\right)
}$ if%
\[
\frac{1}{N}\sum_{i=1}^{N}\left\vert X_{i}^{\left(  N\right)  }-Y_{i}^{\left(
N\right)  }\right\vert \simeq0.
\]

We will use $C>0$ as a generic positive constant, not necessarily the same at
different occurrences. It may depend on $\beta,h,$ and on the level $k$ of the
approximation scheme appearing in the next section, but on nothing else,
unless stated otherwise.

In order to avoid endless repetitions of the parameters $h$ and $\beta,$ we
use the abbreviation%
\[
\operatorname*{Th}\left(  x\right)  =\tanh\left(  h+\beta x\right)  .
\]

We always assume $h\neq0,$ and as there is a symmetry between the signs, we
assume $h>0.$ $q=q\left(  \beta,h\right)  $ will exclusively be used for the
unique solution of (\ref{fixed_point}). In the case $h=0,\ \beta>1,$ there is
a unique solution of (\ref{fixed_point}) which is positive. Proposition
\ref{Prop_Mhat_main} is valid in this case, too, but this does not lead to a
useful result. So, we stick to the $h>0$ case.

Gaussian random variables are always assumed to be centered.

\section{The recursive scheme for the solutions of the TAP equations}

We recursively define a double sequences $\mathbf{m}^{\left(  k\right)
}=\left\{  m_{i}^{\left(  k\right)  }\right\}  _{1\leq i\leq N,\ k\in
\mathbb{N}} $ of random variables by putting%
\[
\mathbf{m}^{\left(  0\right)  }\overset{\mathrm{def}}{=}0,\ \mathbf{m}%
^{\left(  1\right)  }\overset{\mathrm{def}}{=}\sqrt{q}\mathbf{1},\ \forall
i\leq N,
\]
$\mathbf{1}$ here the vector with coordinates all $1,$ and $q=q\left(
\beta,h\right)  $ is the unique solution of (\ref{fixed_point}). We define%
\[
\mathbf{m}^{\left(  k+1\right)  }\overset{\mathrm{def}}{=}\operatorname*{Th}%
\left(  \mathbf{g\mathbf{m}}^{\left(  k\right)  }-\beta\left(  1-q\right)
\mathbf{m}^{\left(  k-1\right)  }\right)  ,~k\geq1.
\]
$k$ will exclusively been used to number this level of the iteration. Our main
result is

\begin{theorem}
\label{Th_Main1}Assume $h>0.$ If $\beta>0$ is below the AT-line, i.e. if%
\begin{equation}
\beta^{2}E\cosh^{-4}\left(  h+\beta\sqrt{q}Z\right)  \leq1,\label{AT}%
\end{equation}
then%
\[
\lim_{k,k^{\prime}\rightarrow\infty}\limsup_{N\rightarrow\infty}%
\mathbb{E}\left\Vert \mathbf{m}^{\left(  k\right)  }-\mathbf{m}^{\left(
k^{\prime}\right)  }\right\Vert ^{2}=0.
\]
If there is strict inequality in (\ref{AT}), then there exist $0<\lambda
\left(  \beta,h\right)  <1,$ and $C>0,$ such that for all $k$%
\[
\limsup_{N\rightarrow\infty}\mathbb{E}\left\Vert \mathbf{m}^{\left(
k+1\right)  }-\mathbf{m}^{\left(  k\right)  }\right\Vert ^{2}\leq C\lambda
^{k}.
\]

\end{theorem}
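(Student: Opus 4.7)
The plan is to use Proposition~\ref{Prop_Mhat_main} to reduce the stochastic assertion to a scalar deterministic recursion on the overlaps
\[
\gamma_{k,l}:=\lim_{N\to\infty}\langle\mathbf{m}^{(k)},\mathbf{m}^{(l)}\rangle,
\]
and to analyse that recursion using convexity together with the identification of the AT quantity as its derivative at the fixed point. What I will assume the proposition supplies is, in AMP language, a state-evolution statement: the coordinates of $\mathbf{X}^{(k)}:=\mathbf{g}\mathbf{m}^{(k)}-\beta(1-q)\mathbf{m}^{(k-1)}$ are asymptotically jointly Gaussian across iteration indices, the Onsager subtraction being tuned so that the limiting Gaussians $Z_k$ satisfy $\mathrm{Var}(Z_k)=q$ and $\mathrm{Cov}(Z_k,Z_l)=\gamma_{k,l}$, without any residual contamination by earlier iterates.

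From this input, the polarization identity together with the self-propagating fact $\gamma_{k,k}=q$ (induction from $\|\mathbf{m}^{(1)}\|^{2}=q$ and (\ref{fixed_point})) yields
\[
\limsup_{N\to\infty}\mathbb{E}\|\mathbf{m}^{(k)}-\mathbf{m}^{(k')}\|^{2}=2q-2\gamma_{k,k'},
\]
with the passage from the $\simeq$-concentration to convergence in $\mathbb{E}\|\cdot\|^{2}$ automatic since $\|\mathbf{m}^{(k)}\|\le 1$ deterministically. So the theorem reduces to $\gamma_{k,k'}\to q$ as $\min(k,k')\to\infty$ (with geometric rate under strict AT). Writing
\[
\psi(\rho):=E[\mathrm{Th}(Z)\mathrm{Th}(Z')],\qquad (Z,Z')\sim\mathcal{N}\!\left(0,\begin{pmatrix}q&\rho\\\rho&q\end{pmatrix}\right),
\]
state evolution produces the recursion $\gamma_{k+1,l+1}=\psi(\gamma_{k,l})$ for $k,l\ge 1$, consistent at $\psi(q)=q$. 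Boundary conditions are $\gamma_{1,0}=0$ and, for $l\ge 2$, $\gamma_{1,l}\to\alpha:=\sqrt{q}\,E[\tanh(h+\beta\sqrt{q}Z)]\in[0,q]$, so iteration gives $\gamma_{k+1,k}=\psi^{k}(0)$ and, for $1<k<l$, $\gamma_{k,l}=\psi^{\min(k,l)-1}(\alpha)$.

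Next I would study the scalar map $\psi$ on $[0,q]$. A Hermite expansion of $x\mapsto\tanh(h+\beta\sqrt{q}x)$ yields $\psi(\rho)=\sum_{n\ge 0}b_{n}(\rho/q)^{n}$ with $b_{n}\ge 0$, so $\psi$ is real-analytic, strictly increasing, and convex on $[0,q]$; strict convexity holds since $\tanh(h+\beta\sqrt{q}x)$ is genuinely non-affine for $h>0$. Differentiating the Gaussian density in the covariance parameter (Price's identity / Gaussian integration by parts) gives
\[
\psi'(\rho)=\beta^{2}E\bigl[\cosh^{-2}(h+\beta Z)\cosh^{-2}(h+\beta Z')\bigr],
\]
and at $\rho=q$ the pair degenerates ($Z=Z'$ a.s.), producing $\psi'(q)=\beta^{2}E\cosh^{-4}(h+\beta\sqrt{q}Z)$, precisely the AT quantity of (\ref{AT}). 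Convexity then gives $\psi'(\rho)\le\psi'(q)$ on $[0,q]$, hence the contraction-type bound $q-\psi(\rho)\le\psi'(q)(q-\rho)$.

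Iterating: under strict (\ref{AT}), $q-\gamma_{k,l}\le q\lambda^{\min(k,l)-1}$ with $\lambda:=\psi'(q)<1$, giving the quantitative second assertion with $C=2q$. Under the weak AT inequality $\psi'(q)\le 1$, strict convexity together with $\psi(q)=q$ still forces $\psi(\rho)>\rho$ on $[0,q)$ (otherwise $\psi-\mathrm{id}$ would have two zeros on $[0,q]$, so by convexity would be $\le 0$ on the interval between them, contradicting the local expansion of $\psi-\mathrm{id}$ around $q$), so the monotone sequence $\psi^{n}(\alpha)$ increases to the unique fixed point $q$, proving the Cauchy assertion. The principal obstacle I anticipate is not in this deterministic dynamical analysis but in the probabilistic input: verifying that the joint Gaussian limit delivered by Proposition~\ref{Prop_Mhat_main} has the clean covariance $\mathrm{Cov}(Z_k,Z_l)=\gamma_{k,l}$, with the Onsager correction cancelling all memory contamination arising from the symmetry $g_{ij}=g_{ji}$, is the true heart of the technical work. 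A secondary subtlety is the equality case of (\ref{AT}), handled as above by ruling out spurious fixed points via strict convexity.
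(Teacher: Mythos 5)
Your reduction of the theorem to the convergence $\gamma_{k,k'}\to q$ of the limiting overlaps, followed by the analysis of the scalar map $\psi$ (monotone, strictly convex, $\psi'(q)$ equal to the AT quantity, hence a unique attracting fixed point at $q$), is exactly the paper's route. The paper encodes the overlap limit as $\rho_j=\lim_N\langle\mathbf{m}^{(j)},\mathbf{m}^{(k)}\rangle$ for $j<k$, satisfying $\rho_j=\psi(\rho_{j-1})$, proves $\psi'>0$, $\psi''\geq0$ and $\psi'(q)=\beta^2E\cosh^{-4}(h+\beta\sqrt qZ)$ by Gaussian integration by parts (Lemma \ref{Le_psi}) where you use a Hermite expansion, and then invokes Lemma \ref{Le_ConvergenceConstants}; the two derivations of $\psi$'s properties are equivalent.

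There is one slip in your boundary bookkeeping. You claim $\gamma_{k+1,k}=\psi^{k}(0)$ from the seed $\gamma_{1,0}=0$, but the state-evolution recursion $\gamma_{k+1,l+1}=\psi(\gamma_{k,l})$ does not apply at $l=0$: since $\mathbf{m}^{(1)}=\sqrt q\,\mathbf{1}$ is deterministic (not of the form $\operatorname*{Th}(\text{Gaussian})$), the step from $(\mathbf{m}^{(1)},\mathbf{m}^{(0)})$ to $(\mathbf{m}^{(2)},\mathbf{m}^{(1)})$ is not governed by $\psi$. A direct computation gives $\gamma_{2,1}=\sqrt q\,E\operatorname*{Th}(\sqrt q Z)=\rho_1$ (your $\alpha$), whereas $\psi(0)=\bigl(E\operatorname*{Th}(\sqrt q Z)\bigr)^2$; these differ whenever $E\operatorname*{Th}(\sqrt qZ)<\sqrt q$, i.e.\ always. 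Hence the correct formula is $\gamma_{k+1,k}=\psi^{k-1}(\rho_1)=\rho_k$, which in fact coincides with your other expression $\gamma_{k,l}=\psi^{\min(k,l)-1}(\alpha)$; the two formulas you give are mutually inconsistent, and the $\psi^k(0)$ one is the wrong one. The slip is harmless for the conclusion, since under (\ref{AT}) any orbit $\psi^{n}(x)$ with $x\in[0,q]$ converges to $q$, and under strict (\ref{AT}) the contraction bound $q-\psi(x)\leq\psi'(q)(q-x)$ gives the same geometric rate $\lambda=\psi'(q)$ regardless of the starting point.
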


The theorem is a straightforward consequence of a computation of the inner
products $\left\langle \mathbf{m}^{\left(  i\right)  },\mathbf{m}^{\left(
j\right)  }\right\rangle .$ We explain that first. The actual computation of
these inner products will be quite involved and will depend on clarifying the
structural dependence of $\mathbf{m}^{\left(  k\right)  }$ on $\mathbf{g.}$

As we assume $h>0,$ we have $q>0.$ We define a function $\psi:\left[
0,q\right]  \rightarrow\mathbb{R}$ by%
\[
\psi\left(  t\right)  \overset{\mathrm{def}}{=}E\operatorname*{Th}\left(
\sqrt{t}Z+\sqrt{q-t}Z^{\prime}\right)  \operatorname*{Th}\left(  \sqrt
{t}Z+\sqrt{q-t}Z^{\prime\prime}\right)  ,
\]
where $Z,Z^{\prime},Z^{\prime\prime},$ as usual, are independent standard
Gaussians. Remember that $\operatorname*{Th}\left(  x\right)  =\tanh\left(
h+\beta x\right)  .$

Let $\alpha\overset{\mathrm{def}}{=}E\operatorname*{Th}\left(  \sqrt
{q}Z\right)  >0. $

\begin{lemma}
\label{Le_psi}

\begin{enumerate}
\item[a)] $\psi$ satisfies $0<\psi\left(  0\right)  =\alpha^{2}<\psi\left(
q\right)  =q,$ and is strictly increasing and convex on $\left[  0,q\right]
.$

\item[b)]
\[
\psi^{\prime}\left(  q\right)  =\beta^{2}E\cosh^{-4}\left(  h+\beta\sqrt
{q}Z\right)  .
\]

\end{enumerate}
\end{lemma}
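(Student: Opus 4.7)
The key is to recognize $\psi(t)$ as an inner product of two copies of $\operatorname{Th}$ evaluated at a jointly Gaussian pair. The vector $\left(\sqrt{t}Z+\sqrt{q-t}Z',\,\sqrt{t}Z+\sqrt{q-t}Z''\right)$ is centered Gaussian with both variances equal to $q$ and covariance $t$. I would therefore rescale, setting $U = X/\sqrt q$, $V = Y/\sqrt q$ so that $(U,V)$ is a standard Gaussian pair with correlation $\rho = t/q \in [0,1]$, and set $f(u) = \operatorname{Th}(\sqrt q\, u)$. Since $f$ is bounded, expand it in probabilists' Hermite polynomials, $f = \sum_{n\geq 0} c_n H_n/\sqrt{n!}$. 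Mehler's formula, $E[H_m(U)H_n(V)] = n!\,\rho^{n}\delta_{mn}$, then gives the crucial identity
\[
\psi(t) \;=\; E[f(U)f(V)] \;=\; \sum_{n=0}^{\infty} c_n^{2}\left(\tfrac{t}{q}\right)^{n}.
\]

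\textbf{Part (a).} From this power series, every claim in (a) is transparent. Setting $t=0$ yields $\psi(0) = c_0^2 = (Ef(Z))^2 = \alpha^2$, while $\psi(q) = \sum c_n^2 = \|f\|_{L^2(\phi)}^2 = \int \tanh^2(h+\beta\sqrt q z)\,\phi(dz) = q$ by the fixed-point equation \eqref{fixed_point}. Cauchy--Schwarz gives $\alpha^2 \leq q$ with strict inequality since $f$ is non-constant. Positivity of $\alpha$ follows from the pairing trick: by symmetry of $\phi$, $2\alpha = \int[\tanh(h+\beta\sqrt q z)+\tanh(h-\beta\sqrt q z)]\phi(dz) > 0$, because for $h>0$ the integrand is strictly positive (for $x>0$, $|h-x|<h+x$ forces $\tanh(h-x)+\tanh(h+x)>0$ by oddness and strict monotonicity of $\tanh$). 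Term-by-term differentiation of the power series gives strict monotonicity (since $c_1 = E[Z f(Z)] = \beta\sqrt q\,E[\cosh^{-2}(h+\beta\sqrt q Z)] > 0$) and strict convexity (since some $c_n$ with $n\geq 2$ is nonzero, as $f$ is not affine).

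\textbf{Part (b).} The cleanest derivation of $\psi'(q)$ is via Gaussian integration by parts (Price's theorem): for jointly Gaussian $(X,Y)$ with fixed variances and covariance $t$,
\[
\psi'(t) \;=\; \frac{d}{dt}E[\operatorname{Th}(X)\operatorname{Th}(Y)] \;=\; E[\operatorname{Th}'(X)\operatorname{Th}'(Y)].
\]
At $t=q$ the correlation is $1$, so $X=Y$ a.s., and since $\operatorname{Th}'(x) = \beta/\cosh^{2}(h+\beta x)$,
\[
\psi'(q) \;=\; E[\operatorname{Th}'(X)^2] \;=\; \beta^{2}\,E\cosh^{-4}\!\left(h+\beta\sqrt q\,Z\right),
\]
as claimed. (Alternatively, the Hermite identity $\sum_{n\geq 1} n c_n^2 = \|f'\|_{L^2(\phi)}^{2} = \beta^2 q\,E\cosh^{-4}(h+\beta\sqrt q Z)$ gives the same value after dividing by $q$.)

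\textbf{Main obstacle.} Nothing here is deep; the substantive step is recognising the correct jointly Gaussian structure and invoking Mehler's formula, after which everything reduces to inspecting a power series with nonnegative coefficients. The only point requiring a touch of care is the strict inequality $\alpha^2 < q$ and the strict convexity, both of which come down to observing that $f = \tanh(h+\beta\sqrt q\,\cdot)$ is neither constant nor affine.
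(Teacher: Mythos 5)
Your proposal is correct, and for part (a) it follows a genuinely different route from the paper. The paper computes $\psi'$ directly by two Gaussian partial integrations, obtaining $\psi'(t)=E\operatorname{Th}'\bigl(\sqrt{t}Z+\sqrt{q-t}Z'\bigr)\operatorname{Th}'\bigl(\sqrt{t}Z+\sqrt{q-t}Z''\bigr)$ and the analogous formula for $\psi''$ with $\operatorname{Th}''$, then integrates out the independent copies $Z',Z''$ to exhibit each as $\int\bigl[\int\operatorname{Th}'(\sqrt{t}x+\sqrt{q-t}y)\,\phi(dy)\bigr]^{2}\phi(dx)>0$, which gives strict increase and convexity with no machinery beyond Gaussian integration by parts. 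You instead diagonalize: rescaling to a standard pair with correlation $t/q$ and expanding $f=\operatorname{Th}(\sqrt{q}\,\cdot)$ in Hermite polynomials, Mehler's formula gives $\psi(t)=\sum_{n}c_{n}^{2}(t/q)^{n}$, from which the endpoint values, strict monotonicity ($c_{1}>0$) and convexity (indeed strict convexity, slightly more than the paper claims) are read off from the nonnegative coefficients; your Cauchy--Schwarz argument for $\alpha^{2}<q$ and the pairing argument for $\alpha>0$ supply details the paper treats as evident. For part (b) the two arguments essentially coincide: your appeal to Price's theorem is exactly the identity $\psi'(t)=E\operatorname{Th}'(X)\operatorname{Th}'(Y)$ that the paper derives by hand, evaluated at $t=q$ where $X=Y=\sqrt{q}Z$ a.s.; your Hermite identity $\psi'(q)=\sum_{n}nc_{n}^{2}/q=\beta^{2}E\cosh^{-4}(h+\beta\sqrt{q}Z)$ is a nice independent check and also settles the one-sided derivative at the endpoint. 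In short, the paper's proof is more elementary and self-contained, while yours buys a stronger structural statement (a power series in $t/q$ with nonnegative coefficients, hence positivity of all derivatives) at the cost of invoking the Hermite--Mehler apparatus.
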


\begin{proof}
$\psi\left(  0\right)  =\alpha^{2},$ and $\psi\left(  q\right)  =q$ are
evident by the definition of $\alpha,q.$ We compute the first two derivatives
of $\psi:$%
\begin{align*}
\psi^{\prime}\left(  t\right)   & =\frac{1}{\sqrt{t}}%
E\Big [Z\operatorname*{Th}\nolimits^{\prime}\left(  \sqrt{t}Z+\sqrt
{q-t}Z^{\prime}\right)  \operatorname*{Th}\left(  \sqrt{t}Z+\sqrt
{q-t}Z^{\prime\prime}\right)  \Big ]\\
& -\frac{1}{\sqrt{q-t}}E\Big [Z^{\prime}\operatorname*{Th}\nolimits^{\prime
}\left(  \sqrt{t}Z+\sqrt{q-t}Z^{\prime}\right)  \operatorname*{Th}\left(
\sqrt{t}Z+\sqrt{q-t}Z^{\prime\prime}\right)  \Big ]\\
& =E\operatorname*{Th}\nolimits^{\prime\prime}\left(  \sqrt{t}Z+\sqrt
{q-t}Z^{\prime}\right)  \operatorname*{Th}\left(  \sqrt{t}Z+\sqrt
{q-t}Z^{\prime\prime}\right) \\
& +E\operatorname*{Th}\nolimits^{\prime}\left(  \sqrt{t}Z+\sqrt{q-t}Z^{\prime
}\right)  \operatorname*{Th}\nolimits^{\prime}\left(  \sqrt{t}Z+\sqrt
{q-t}Z^{\prime\prime}\right) \\
& -E\operatorname*{Th}\nolimits^{\prime\prime}\left(  \sqrt{t}Z+\sqrt
{q-t}Z^{\prime}\right)  \operatorname*{Th}\left(  \sqrt{t}Z+\sqrt
{q-t}Z^{\prime\prime}\right) \\
& =E\operatorname*{Th}\nolimits^{\prime}\left(  \sqrt{t}Z+\sqrt{q-t}Z^{\prime
}\right)  \operatorname*{Th}\nolimits^{\prime}\left(  \sqrt{t}Z+\sqrt
{q-t}Z^{\prime\prime}\right)  .
\end{align*}
the second equality by Gaussian partial integration.

Differentiating once more, we get%
\[
\psi^{\prime\prime}\left(  t\right)  =E\left(  \operatorname*{Th}%
\nolimits^{\prime\prime}\left(  \sqrt{t}Z+\sqrt{q-t}Z^{\prime}\right)
\operatorname*{Th}\nolimits^{\prime\prime}\left(  \sqrt{t}Z+\sqrt
{q-t}Z^{\prime\prime}\right)  \right)  .
\]

In both expressions, we can first integrate out $Z^{\prime},Z^{\prime\prime},$
getting%
\[
\psi^{\prime}\left(  t\right)  =\int_{-\infty}^{\infty}\left[  \int_{-\infty
}^{\infty}\operatorname*{Th}\nolimits^{\prime}\left(  \sqrt{t}x+\sqrt
{q-t}y\right)  \phi\left(  dy\right)  \right]  ^{2}\phi\left(  dx\right)  >0,
\]
and the similar expression for $\psi^{\prime\prime}$ with $\operatorname*{Th}%
\nolimits^{\prime}$ replaced by $\operatorname*{Th}\nolimits^{\prime\prime}.$
So, we see that $\psi$ is increasing and convex. Furthermore, as%
\begin{align*}
\operatorname*{Th}\nolimits^{\prime}\left(  x\right)   & =\beta\tanh^{\prime
}\left(  \beta x+h\right)  =\beta\left(  1-\tanh^{2}\left(  \beta x+h\right)
\right) \\
& =\frac{\beta}{\cosh^{2}\left(  \beta x+h\right)  },
\end{align*}
we get%
\[
\psi^{\prime}\left(  q\right)  =E\operatorname*{Th}\nolimits^{\prime}\left(
\sqrt{q}Z\right)  ^{2}=\beta^{2}E\cosh^{-4}\left(  h+\beta\sqrt{q}Z\right)  .
\]

\end{proof}

\begin{corollary}
If (\ref{AT}) is satisfied, then $q$ is the only fixed point of $\psi$ in the
interval $\left[  0,q\right]  .$ If (\ref{AT}) is not satisfied then there is
a unique fixed point of $\psi\left(  t\right)  =t$ inside the interval
$\left(  0,q\right)  .$
\end{corollary}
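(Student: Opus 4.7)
The plan is to study the auxiliary function $f(t)\overset{\mathrm{def}}{=}\psi(t)-t$ on $[0,q]$ and read off the fixed points from its sign changes. From Lemma \ref{Le_psi}(a) we have $f(0)=\alpha^{2}>0$ and $f(q)=0$, and $f$ is convex since $\psi$ is. To get a clean picture I first upgrade convexity to \emph{strict} convexity: the computation in Lemma \ref{Le_psi} shows that after partial integration
\[
\psi^{\prime\prime}(t)=\int_{-\infty}^{\infty}\left[\int_{-\infty}^{\infty}\operatorname*{Th}\nolimits^{\prime\prime}\bigl(\sqrt{t}\,x+\sqrt{q-t}\,y\bigr)\,\phi(dy)\right]^{2}\phi(dx),
\]
and since $\operatorname*{Th}\nolimits^{\prime\prime}(u)=-2\beta^{2}\tanh(h+\beta u)/\cosh^{2}(h+\beta u)$ is a nonzero analytic function, its Gaussian convolution in $y$ is not identically zero in $x$, hence $\psi^{\prime\prime}(t)>0$ for all $t\in[0,q]$. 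Thus $f$ is strictly convex on $[0,q]$, and in particular $f'$ is strictly increasing.

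For the first assertion, assume (\ref{AT}), i.e.\ $f^{\prime}(q)=\psi^{\prime}(q)-1\leq0$. Since $f^{\prime}$ is strictly increasing, $f^{\prime}(t)<f^{\prime}(q)\leq 0$ for every $t\in[0,q)$, so $f$ is strictly decreasing on $[0,q]$. Combined with $f(q)=0$ this gives $f(t)>0$ for $t\in[0,q)$, and $q$ is the unique fixed point of $\psi$ in $[0,q]$.

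For the second assertion, assume $f^{\prime}(q)=\psi^{\prime}(q)-1>0$. Then $f(q-\varepsilon)<0$ for all sufficiently small $\varepsilon>0$, while $f(0)=\alpha^{2}>0$, so the intermediate value theorem produces a zero $t^{\ast}\in(0,q)$ of $f$. Uniqueness follows immediately from strict convexity: a strictly convex function on an interval has at most two zeros, and here one of them is $q$, so there is at most one zero in $[0,q)$; since $f(0)>0$, this zero lies in $(0,q)$ and is the unique fixed point there.

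There is no real obstacle in this argument; the only point deserving care is the upgrade from convexity (given by the lemma) to strict convexity, which is what rules out a whole plateau of fixed points touching $q$ in the borderline AT case and which makes the uniqueness in the non-AT case a one-line consequence rather than requiring a separate analysis.
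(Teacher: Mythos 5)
Your proof is correct. The paper gives no explicit argument for this corollary (it is stated immediately after Lemma \ref{Le_psi} with no proof), treating it as an evident consequence of the lemma. You rightly observe that the properties the lemma \emph{states} --- $\psi$ strictly increasing and convex, with $\psi(0)=\alpha^{2}>0$ and $\psi(q)=q$ --- are by themselves not quite sufficient for the borderline case $\psi^{\prime}(q)=1$ of (\ref{AT}): a merely convex $\psi$ could in principle coincide with the identity on a subinterval ending at $q$, producing a continuum of fixed points, and this is not excluded by ``strictly increasing.'' Upgrading to strict convexity, which you read off from the explicit formula for $\psi^{\prime\prime}$ in the lemma's proof, is exactly what closes this gap and makes the sign analysis of $f(t)=\psi(t)-t$ clean in both cases. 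One small imprecision: at $t=0$ the inner integral in your expression for $\psi^{\prime\prime}$ collapses to the constant $\int\operatorname*{Th}^{\prime\prime}(\sqrt{q}\,y)\,\phi(dy)$, and nonvanishing of that constant is not a ``convolution is injective'' statement but instead uses $h>0$ (for $h=0$ the constant vanishes, since $\operatorname*{Th}^{\prime\prime}$ is then odd). This is harmless here: strict convexity on $(0,q]$ already makes $f^{\prime}$ strictly increasing on all of $[0,q]$ and rules out three collinear zeros of $f$, which is all your argument uses.
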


We define sequences $\left\{  \rho_{k}\right\}  _{k\geq1},\ \left\{
\gamma_{k}\right\}  _{k\geq1}$ recursively by $\gamma_{1}\overset
{\mathrm{def}}{=}\alpha,\ \rho_{1}\overset{\mathrm{def}}{=}\gamma_{1}\sqrt
{q},\ $and for $k\geq2$%
\[
\rho_{k}\overset{\mathrm{def}}{=}\psi\left(  \rho_{k-1}\right)  ,
\]%
\[
\gamma_{k}\overset{\mathrm{def}}{=}\frac{\rho_{k}-\Gamma_{k-1}^{2}}%
{\sqrt{q-\Gamma_{k-1}^{2}}},
\]
where%
\[
\Gamma_{m}^{2}\overset{\mathrm{def}}{=}\sum_{j=1}^{m}\gamma_{j}^{2}%
,\ \Gamma_{0}^{2}\overset{\mathrm{def}}{=}0.
\]

\begin{lemma}
\label{Le_ConvergenceConstants}

\begin{enumerate}
\item[a)] For all $k\in\mathbb{N}$%
\[
\Gamma_{k-1}^{2}<\rho_{k}<q.
\]

\item[b)] If (\ref{AT}) is satisfied, then%
\[
\lim_{k\rightarrow\infty}\rho_{k}=q,\ \lim_{k\rightarrow\infty}\Gamma_{k}%
^{2}=q.
\]

\item[c)] If there is strict inequality in (\ref{AT}) , then $\Gamma_{k}^{2} $
and $\rho_{k}$ converge to $q$ exponentially fast.
\end{enumerate}
\end{lemma}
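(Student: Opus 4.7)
The plan is induction on $k$, built on the algebraic identity
\[
q - \Gamma_k^2 \;=\; \frac{(q - \rho_k)\,(q + \rho_k - 2\Gamma_{k-1}^2)}{q - \Gamma_{k-1}^2} \;=\; (q - \rho_k)\Bigl[2 - \frac{q - \rho_k}{q - \Gamma_{k-1}^2}\Bigr],
\]
obtained by direct expansion of the definition of $\gamma_k^2$. Under the working hypothesis $\Gamma_{k-1}^2 < \rho_k < q$ the bracket lies in $[1,2]$, and one reads off both $\Gamma_k^2 < \rho_k$ and $q - \rho_k \leq q - \Gamma_k^2 \leq 2(q - \rho_k)$; in particular $\Gamma_k^2 < q$, so that the induction hypothesis propagates.

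For part (a), the base $k=1$ reduces to the strict Cauchy--Schwarz inequality $\alpha^2 = (E\operatorname*{Th}(\sqrt q Z))^2 < E\operatorname*{Th}^2(\sqrt q Z) = q$ (strict since $\operatorname*{Th}$ is non-constant), giving $0 = \Gamma_0^2 < \alpha \sqrt q = \rho_1 < q$. In the inductive step the upper bound $\rho_{k+1} = \psi(\rho_k) < \psi(q) = q$ is immediate from the strict monotonicity of $\psi$ (Lemma~\ref{Le_psi}(a)). The sticky inequality is $\Gamma_k^2 < \rho_{k+1}$, which I would deduce from the quadratic lower bound
\[
\psi(t) \;\geq\; y + \frac{(t-y)^2}{q - y}, \qquad 0 \leq y \leq t \leq q,
\]
applied with $y = \Gamma_{k-1}^2$, $t = \rho_k$. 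Under the AT condition this bound is easy: convexity of $\psi$ together with $\psi'(q)\leq 1$ gives $\psi(t) \geq q + \psi'(q)(t-q) \geq t$ on $[0,q]$, while the parabola on the right is itself convex through $(y,y)$ and $(q,q)$ and therefore lies below the diagonal chord $t$. Outside the AT regime, the same bound needs the side information $\Gamma_{k-1}^2 \leq q^*$ (the interior fixed point from the Corollary), which I would secure by a parallel induction, with base $\alpha^2 = \psi(0) < \psi(q^*) = q^*$ (strict because $\psi$ is strictly increasing).

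For (b), under AT the bound $\psi(t) \geq t$ makes $\{\rho_k\}$ monotone increasing and bounded above by $q$ via (a); it therefore converges to a fixed point of $\psi$ which, by the Corollary, must equal $q$. The identity then yields $q - \Gamma_k^2 \leq 2(q - \rho_k) \to 0$. For (c), strict inequality $\psi'(q) < 1$ and convexity of $\psi$ with $\psi(q) = q$ make the secant slope $(q - \psi(t))/(q - t)$ non-decreasing in $t$ on $[0,q)$ with supremum $\psi'(q)$; hence $q - \rho_{k+1} \leq \psi'(q)(q - \rho_k)$, which iterates to $q - \rho_k \leq \psi'(q)^{k-1}(q - \rho_1)$, and the identity transfers this geometric rate to $q - \Gamma_k^2$. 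The principal difficulty is the step $\Gamma_k^2 < \rho_{k+1}$ in part (a) outside AT: the quadratic lower bound on $\psi$ is no longer a direct consequence of $\psi(t) \geq t$, and one must bring in the interior fixed point $q^*$ as a barrier preventing the $\Gamma_k^2$ iterates from overshooting the $\rho_k$ iterates.
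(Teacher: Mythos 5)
Within the AT regime your argument is correct and is essentially the paper's proof in a cleaner notation: your identity $q-\Gamma_k^2=(q-\rho_k)\bigl[2-\tfrac{q-\rho_k}{q-\Gamma_{k-1}^2}\bigr]$ encodes the paper's observation $\gamma_k<\sqrt{\rho_k-\Gamma_{k-1}^2}$, your parabola detour ($\psi(t)\ge t\ge y+\tfrac{(t-y)^2}{q-y}$) is just a roundabout way of saying $\rho_{k+1}=\psi(\rho_k)\ge\rho_k>\Gamma_k^2$, which is exactly the step the paper invokes (``as $\rho_{k+1}>\rho_k$''), and in b), c) your bounds $q-\Gamma_k^2\le 2(q-\rho_k)$ and $q-\psi(t)\le\psi'(q)(q-t)$ are tidy quantitative versions of the paper's contradiction argument for $\Gamma_k^2\to q$ and its one-line ``linearization around $q$''.

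The genuine gap is part a) outside the AT regime, which you flag but do not close, and your proposed repair does not work as stated. The inequality $\psi(t)\ge y+\tfrac{(t-y)^2}{q-y}$ for all $0\le y\le t\le q$ is false in general: both sides equal $q$ at $t=q$, but the parabola has slope $2$ there while $\psi$ has slope $\psi'(q)=\beta^2E\cosh^{-4}(h+\beta\sqrt qZ)$, which exceeds $2$ deep in the low-temperature region, so the bound fails for $t$ near $q$ no matter what $y\le q^{*}$ is. Hence the bound could at best be proved at the specific points $(y,t)=(\Gamma_{k-1}^2,\rho_k)$, and for that the side condition $\Gamma_{k-1}^2\le q^{*}$ is neither established (your ``parallel induction'' is only named) nor sufficient on its own; one must simultaneously locate $\rho_k$. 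Note also that the monotonicity $\rho_{k+1}\ge\rho_k$, which both you (under AT) and the paper rely on, is not available beyond the AT line: nothing prevents $\rho_1=\alpha\sqrt q$ from lying above the interior fixed point $q^{*}$ (heuristically, for large $\beta$ and small $h>0$ one has $\rho_1$ of order $h/\beta$ while $q^{*}$ is of order $(h/\beta)^2$), in which case the sequence $\rho_k$ decreases towards $q^{*}$ and the step from $\Gamma_k^2<\rho_k$ to $\Gamma_k^2<\rho_{k+1}$ needs a genuinely different argument tracking both sequences relative to $q^{*}$. So you have correctly identified the delicate point of the unconditional statement a) (a point the paper itself treats very tersely), but your proposal leaves it unproved.
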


\begin{proof}
a) $\rho_{k}<q$ for all $k$ is evident.

We prove by induction on $k$ that $\rho_{k}>\Gamma_{k-1}^{2}.$ For $k=1,$ as
$\rho_{1}=\gamma_{1}\sqrt{q},$ the statement follows.

Assume that it is true for $k.$ Then%
\[
\gamma_{k}=\frac{\rho_{k}-\Gamma_{k-1}^{2}}{\sqrt{q-\Gamma_{k-1}^{2}}}%
<\sqrt{\rho_{k}-\Gamma_{k-1}^{2}},
\]
i.e. $\rho_{k}>\Gamma_{k}^{2}.$ As $\rho_{k+1}>\rho_{k},$ the statement follows.

b) Evidently $\lim_{k\rightarrow\infty}\rho_{k}=q$ if (\ref{AT}) is satisfied.
The sequence $\left\{  \Gamma_{k}^{2}\right\}  $ is increasing and bounded (by
$q$). If $\zeta\overset{\mathrm{def}}{=}\lim_{k\rightarrow\infty}\Gamma
_{k}^{2}<q,$ then $\lim_{k\rightarrow\infty}\gamma_{k}=\sqrt{q-\zeta}>0,$ a
contradiction to the boundedness of $\left\{  \Gamma_{k}^{2}\right\}  .$

c) Linearization of $\psi$ around $q$ easily shows that the convergence is
exponentially fast if $\psi^{\prime}\left(  q\right)  <1.$
\end{proof}

Remark that by a) of the above lemma, one has $\gamma_{k}>0$ for all $k.$

Let $\Pi_{j}$ be the orthogonal projection in $\mathbb{R}^{N}$, with respect
to the inner product $\left\langle \cdot,\cdot\right\rangle ,$ onto
$\operatorname*{span}\left(  \mathbf{m}^{\left(  1\right)  },\ldots
,\mathbf{m}^{\left(  j\right)  }\right)  .$ We set%
\begin{equation}
\mathbf{M}^{\left(  k,j\right)  }\overset{\mathrm{def}}{=}\mathbf{m}^{\left(
k\right)  }-\Pi_{j}\left(  \mathbf{m}^{\left(  k\right)  }\right)
,\ j<k,\label{Def_Mkj}%
\end{equation}
and%
\begin{equation}
\mathbf{M}^{\left(  k\right)  }\overset{\mathrm{def}}{=}\mathbf{M}^{\left(
k,k-1\right)  }.\label{Def_Mk}%
\end{equation}

Let%
\begin{equation}
\mathbf{\phi}^{\left(  k\right)  }\overset{\mathrm{def}}{=}\frac
{\mathbf{M}^{\left(  k\right)  }}{\left\Vert \mathbf{M}^{\left(  k\right)
}\right\Vert }\label{Def_Phik}%
\end{equation}
if $\left\Vert \mathbf{M}^{\left(  k\right)  }\right\Vert \neq0.$ In case
$\mathbf{m}^{\left(  k\right)  }\in\operatorname*{span}\left(  \mathbf{m}%
^{\left(  1\right)  },\ldots,\mathbf{m}^{\left(  k-1\right)  }\right)  ,$ we
define $\mathbf{\phi}^{\left(  k\right)  }\overset{\mathrm{def}}{=}%
\mathbf{1,}$ to have it defined everywhere, but we will see that this happens
only with exponentially small probability. Remark that $\mathbf{\phi}^{\left(
1\right)  }=\mathbf{1}.$

The key result is:

\begin{proposition}
\label{Prop_Mhat_main}

For all $k\in\mathbb{N}$%
\begin{equation}
\left\Vert \mathbf{m}^{\left(  k\right)  }\right\Vert ^{2}\simeq
q,\label{Main1}%
\end{equation}
and for $1\leq j<k$%
\begin{equation}
\left\langle \mathbf{m}^{\left(  j\right)  },\mathbf{m}^{\left(  k\right)
}\right\rangle \simeq\rho_{j},\label{Main2}%
\end{equation}%
\begin{equation}
\left\langle \mathbf{\phi}^{\left(  j\right)  },\mathbf{m}^{\left(  k\right)
}\right\rangle \simeq\gamma_{j}.\label{Main3}%
\end{equation}

\end{proposition}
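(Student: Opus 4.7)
The plan is to prove (\ref{Main1}), (\ref{Main2}), (\ref{Main3}) simultaneously by induction on $k$. The base case $k=1$ is immediate from $\mathbf{m}^{(1)}=\sqrt{q}\mathbf{1}$. For $k=2$, since $\mathbf{m}^{(2)}=\operatorname*{Th}(\sqrt{q}\,\mathbf{g}\mathbf{1})$ and each coordinate of $\mathbf{g}\mathbf{1}$ is a sum of $N-1$ independent $N(0,1/N)$ Gaussians, Gaussian concentration together with the fixed-point equation (\ref{fixed_point}) yields $\|\mathbf{m}^{(2)}\|^{2}\simeq E\operatorname*{Th}(\sqrt{q}Z)^{2}=q$, $\langle\mathbf{m}^{(1)},\mathbf{m}^{(2)}\rangle\simeq\sqrt{q}\alpha=\rho_{1}$, and $\langle\mathbf{\phi}^{(1)},\mathbf{m}^{(2)}\rangle=\langle\mathbf{1},\mathbf{m}^{(2)}\rangle\simeq\alpha=\gamma_{1}$.

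For the inductive step, the main tool is a Gaussian conditioning argument for the symmetric matrix $\mathbf{g}$. Let $\mathcal{G}_{k}$ denote the $\sigma$-algebra generated by $\mathbf{g}\mathbf{\phi}^{(1)},\ldots,\mathbf{g}\mathbf{\phi}^{(k)}$; by construction all of $\mathbf{m}^{(1)},\ldots,\mathbf{m}^{(k+1)}$ and $\mathbf{\phi}^{(1)},\ldots,\mathbf{\phi}^{(k+1)}$ are $\mathcal{G}_{k}$-measurable. The symmetric-matrix conditioning lemma then decomposes $\mathbf{g}=\mathbf{g}_{0}+\widetilde{\mathbf{g}}_{k}$, with $\mathbf{g}_{0}$ being $\mathcal{G}_{k}$-measurable and $\widetilde{\mathbf{g}}_{k}$ an independent symmetric Gaussian matrix supported on the orthogonal complement of $\operatorname*{span}(\mathbf{\phi}^{(1)},\ldots,\mathbf{\phi}^{(k)})$. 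Using the induction hypothesis to write $\mathbf{m}^{(k+1)}\simeq\sum_{j=1}^{k}\gamma_{j}\mathbf{\phi}^{(j)}+\|\mathbf{M}^{(k+1)}\|\mathbf{\phi}^{(k+1)}$ with $\|\mathbf{M}^{(k+1)}\|\simeq\sqrt{q-\Gamma_{k}^{2}}$, one decomposes $\mathbf{g}\mathbf{m}^{(k+1)}$ into an explicit $\mathcal{G}_{k}$-measurable part and a ``fresh'' Gaussian contribution $\widetilde{\mathbf{g}}_{k}\mathbf{M}^{(k+1)}$. The $\mathcal{G}_{k}$-measurable part contains a reflection term, forced by the symmetry of $\mathbf{g}$, which to leading order equals $\beta(1-q)\mathbf{m}^{(k)}$ --- precisely what the Onsager correction $-\beta(1-q)\mathbf{m}^{(k)}$ in the definition of $\mathbf{m}^{(k+2)}$ was designed to cancel.

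After this cancellation, the vector $\mathbf{z}^{(k+1)}:=\mathbf{g}\mathbf{m}^{(k+1)}-\beta(1-q)\mathbf{m}^{(k)}$ has empirical joint distribution with its predecessors approximately centered multivariate Gaussian, with each marginal variance $q$ and covariance $\rho_{j\wedge l}$ between $\mathbf{z}^{(j)}$ and $\mathbf{z}^{(l)}$ for $j\neq l$. Since $\mathbf{m}^{(k+2)}=\operatorname*{Th}(\mathbf{z}^{(k+1)})$, Gaussian concentration applied coordinatewise then yields $\|\mathbf{m}^{(k+2)}\|^{2}\simeq E\operatorname*{Th}(\sqrt{q}Z)^{2}=q$ (via (\ref{fixed_point})), and $\langle\mathbf{m}^{(j+1)},\mathbf{m}^{(k+2)}\rangle\simeq\psi(\rho_{j})=\rho_{j+1}$, giving (\ref{Main1}) and (\ref{Main2}) at the next level. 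Claim (\ref{Main3}) then follows from (\ref{Main1}) and (\ref{Main2}) by a short secondary induction on $l$, using the identity $\rho_{l}\simeq\Gamma_{l-1}^{2}+\langle\mathbf{\phi}^{(l)},\mathbf{m}^{(k+2)}\rangle\sqrt{q-\Gamma_{l-1}^{2}}$ and the definition of $\gamma_{l}$.

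The main obstacle is making the Gaussian conditioning rigorous in the symmetric case: identifying the reflection term precisely, verifying it matches the Onsager correction up to errors negligible in the $\simeq$-sense, and propagating exponential concentration bounds (via Borell's inequality applied to the Lipschitz functions $\operatorname*{Th}(\cdot)$ and polynomial functions of $\mathbf{g}$) through each level of the iteration. A secondary point is non-degeneracy of the Gram-Schmidt step: Lemma \ref{Le_ConvergenceConstants}(a) gives $\Gamma_{k}^{2}<\rho_{k+1}<q$, which guarantees $\|\mathbf{M}^{(k+1)}\|\simeq\sqrt{q-\Gamma_{k}^{2}}$ stays bounded away from $0$, so $\mathbf{\phi}^{(k+1)}$ is well defined with probability exponentially close to one and the induction can be carried through.
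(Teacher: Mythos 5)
Your proposal is correct in outline and follows essentially the same route as the paper: conditioning on the $\sigma$-field generated by the fields of the previous iterates, a symmetric-Gaussian decomposition whose reflection term is shown (via Gaussian integration by parts) to equal $\beta(1-q)\mathbf{m}^{(k)}$ at leading order and is cancelled by the Onsager correction, the resulting approximately Gaussian structure with covariances $\rho_{j\wedge l}$ feeding the $\psi$-recursion $\psi(\rho_j)=\rho_{j+1}$, and non-degeneracy of the Gram--Schmidt step from Lemma \ref{Le_ConvergenceConstants} a). What you defer as ``the main obstacle'' is exactly what the paper spends Sections 3--5 on: implementing the conditioning iteratively via the matrices $\mathbf{g}^{(k)}$ and the filtration $\mathcal{F}_k$, the conditional covariance estimates, and the conditional law of large numbers (Lemma \ref{Le_MainLLN}).
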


\begin{proof}
[Proof of Theorem \ref{Th_Main1} from Proposition \ref{Prop_Mhat_main}]As the
variables $\mathbf{m}^{\left(  k\right)  }$ are bounded, (\ref{Main1}) implies%
\[
\lim_{N\rightarrow\infty}\mathbb{E}\left\Vert \mathbf{m}^{\left(  k\right)
}\right\Vert ^{2}=q,
\]
and similarly for the other statements.%
\[
\mathbb{E}\left\Vert \mathbf{m}^{\left(  k+1\right)  }-\mathbf{m}^{\left(
k\right)  }\right\Vert ^{2}=\mathbb{E}\left\Vert \mathbf{m}^{\left(  k\right)
}\right\Vert ^{2}+\mathbb{E}\left\Vert \mathbf{m}^{\left(  k-1\right)
}\right\Vert ^{2}-2\mathbb{E}\left\langle \mathbf{m}^{\left(  k\right)
},\mathbf{m}^{\left(  k-1\right)  }\right\rangle .
\]
Taking the $N\rightarrow\infty$ limit, using Proposition \ref{Prop_Mhat_main},
this converges to $2q-2\rho_{k-1}.$ From Lemma \ref{Le_ConvergenceConstants},
the claim follows.
\end{proof}

\begin{remark}
Proposition \ref{Prop_Mhat_main} is true for all temperatures. However, beyond
the AT-line, it does not give much information on the behavior of the
$\mathbf{m}^{\left(  k\right)  }$ for large $k.$ It would be very interesting
to know if these iterates satisfy some structural properties beyond the AT-line.
\end{remark}

The main task is to prove the Proposition \ref{Prop_Mhat_main}. It follows by
an involved induction argument. We first remark that (\ref{Main3}) is a
consequence of (\ref{Main1}) and (\ref{Main2}).

If $J\in\mathbb{N}$ let $\operatorname{COND}\left(  J\right)  $ be the
statement that (\ref{Main1}) and (\ref{Main2}) hold for $k\leq J.$
$\operatorname{COND}\left(  1\right)  $ is evidently true.

$\operatorname{COND}\left(  J\right)  $ implies that for all $k\leq J,$ we
have with%
\[
\delta_{k}\overset{\mathrm{def}}{=}\sqrt{q-\Gamma_{k-1}^{2}}/2>0
\]
that%
\[
\mathbb{P}\left(  \left\Vert \mathbf{M}^{\left(  k\right)  }\right\Vert
\leq\delta_{k}\right)  \leq C_{k}\exp\left[  -N/C_{k}\right]  .
\]
If we put%
\begin{equation}
A_{J}\overset{\mathrm{def}}{=}\bigcap_{k=1}^{J}\left\{  \left\Vert
\mathbf{M}^{\left(  k\right)  }\right\Vert >\delta_{k}\right\}
,\label{AJ_Def}%
\end{equation}
then%
\begin{equation}
\mathbb{P}\left(  A_{J}\right)  \geq1-C_{J}\exp\left[  -N/C_{J}\right]
.\label{AJ_est}%
\end{equation}

Evidently, all variables $\mathbf{\phi}^{\left(  k\right)  }$ are bounded by a
constant on $A_{J},$ if $k\leq J.$ The constant may depend on $J,$ of course.
The $\mathbf{m}^{\left(  k\right)  }$ are bounded by $1$ everywhere.

\section{Iterative modifications of the interaction variables}

Let $\mathcal{G}$ be a sub-$\sigma$-field of $\mathcal{F},$ and $\mathbf{y}%
=\left(  y_{ij}\right)  _{1\leq i,j\leq N}$ be a random matrix. We are only
interested in the case where $\mathbf{y}$ is symmetric and $0$ on the
diagonal, but this is not important for the moment. We assume that
$\mathbf{y}$ is jointly Gaussian, conditioned on $\mathcal{G},$ i.e. there is
a positive semidefinite $N^{2}\times N^{2}$- $\mathcal{G}$-m.b. matrix
$\Gamma$ such that%
\[
E\left(  \left.  \exp\left[  i\sum\nolimits_{k,j}t_{kj}y_{kj}\right]
\right\vert \mathcal{G}\right)  =\exp\left[  -\frac{1}{2}\sum
\nolimits_{k,k^{\prime},j,j^{\prime}}t_{kj}\Gamma_{kj,k^{\prime}j^{\prime}%
}t_{k^{\prime}j^{\prime}}\right]  .
\]
(We do not assume that $\mathbf{y}$ is Gaussian, unconditionally). Consider a
$\mathcal{G}$-measurable random vector $\mathbf{x}$, and the linear space of
random variables%
\[
\mathcal{L}\overset{\mathrm{def}}{=}\left\{  \sum\nolimits_{i=1}^{N}%
a_{i}\left(  \mathbf{y\mathbf{x}}\right)  _{i}:a_{1},\ldots,a_{N}%
\ \mathcal{G}-\mathrm{measurable}\right\}  .
\]
We consider the linear projection $\pi_{\mathcal{L}}\left(  \mathbf{y}\right)
$ of $\mathbf{y}$ onto $\mathcal{L},$ which is defined to be the unique matrix
with components $\pi_{\mathcal{L}}\left(  y_{ij}\right)  $ in $\mathcal{L}$
which satisfy%
\[
\mathbb{E}\left(  \left\{  y_{ij}-\pi_{\mathcal{L}}\left(  y_{ij}\right)
\right\}  U%
\vert
\mathcal{G}\right)  =0,\ \forall U\in\mathcal{L}.
\]
As $\mathbf{y}$ is assumed to be conditionally Gaussian, given $\mathcal{G},$
it follows that $\mathbf{y}-\pi_{\mathcal{L}}\left(  \mathbf{y}\right)  $ is
conditionally independent of the variables in $\mathcal{L},$ given
$\mathcal{G}.$

If $\mathbf{y}$ is symmetric, then clearly $\pi_{\mathcal{L}}\left(
\mathbf{y}\right)  $ is symmetric, too.

\begin{remark}
\label{Rem_Proj}If $X$ is a $\mathcal{G}$-measurable random variable then
$\mathbf{y}X$ is conditionally Gaussian as well and%
\[
\pi_{\mathcal{L}}\left(  \mathbf{y}\right)  X=\pi_{\mathcal{L}}\left(
\mathbf{y}X\right)  .
\]

\end{remark}

Remark also that%
\begin{equation}
\left(  \mathbf{y}-\pi_{\mathcal{L}}\left(  \mathbf{y}\right)  \right)
\mathbf{x=\mathbf{y\mathbf{x-}}}\pi_{\mathcal{L}}\left(  \mathbf{y\mathbf{x}%
}\right)  =0,\label{key1}%
\end{equation}
as $\mathbf{y\mathbf{x}}\in\mathcal{L}.$

Using this construction, we define a sequence $\mathbf{g}^{\left(  k\right)
},k\geq1$ of matrices, and a sequence $\left\{  \mathcal{F}_{k}\right\}  $ of
sub-$\sigma$-fields of $\mathcal{F},$ starting with $\mathbf{g}^{\left(
1\right)  }\overset{\mathrm{def}}{=}\mathbf{g,}$ and $\mathcal{F}%
_{-1}=\mathcal{F}_{0}=\left\{  \emptyset,\Omega\right\}  $). The construction
is done in such a way that

\begin{enumerate}
\item[(C1)] $\mathbf{g}^{\left(  k\right)  }$ is conditionally Gaussian, given
$\mathcal{F}_{k-1}.$

\item[(C2)] $\mathbf{m}^{\left(  k\right)  },\ \mathbf{M}^{\left(  k\right)
},$ and $\mathbf{\phi}^{\left(  k\right)  }$ are $\mathcal{F}_{k-1}$-measurable
\end{enumerate}

Using that we define%
\[
\mathbf{g}^{\left(  k+1\right)  }=\mathbf{g}^{\left(  k\right)  }%
-\pi_{\mathcal{L}_{k}}\left(  \mathbf{\mathbf{g}}^{\left(  k\right)  }\right)
,
\]
with%
\[
\mathcal{L}_{k}\overset{\mathrm{def}}{=}\left\{  \sum\nolimits_{i=1}^{N}%
a_{i}\left(  \mathbf{g}^{\left(  k\right)  }\mathbf{M}^{\left(  k\right)
}\right)  _{i}:a_{i}\ \mathcal{F}_{k-1}\mathrm{-measurable}\right\}  ,
\]
i.e. we perform the above construction with $\mathcal{G}=\mathcal{F}_{k-1}$
and $\mathbf{x}=\mathbf{M}^{\left(  k\right)  }$.

Furthermore, we define%
\[
\mathcal{F}_{k+1}\overset{\mathrm{def}}{=}\sigma\left(  \mathcal{F}%
_{k},\mathbf{\xi}^{\left(  k+1\right)  }\right)  ,
\]
where%
\[
\mathbf{\xi}^{\left(  k\right)  }\overset{\mathrm{def}}{=}\mathbf{g}^{\left(
k\right)  }\mathbf{\phi}^{\left(  k\right)  }.
\]

In order that the construction is well defined, we have to inductively prove
the properties \textbf{(C1) }and \textbf{(C2). }We actually prove a condition
which is stronger than \textbf{(C1)}:

\begin{enumerate}
\item[(C1')] Conditionally on $\mathcal{F}_{k-2},$ $\mathbf{g}^{\left(
k\right)  }$ is Gaussian, and conditionally independent of $\mathcal{F}_{k-1}.
$
\end{enumerate}

\textbf{(C1') }implies that $\mathbf{g}^{\left(  k\right)  }$ is conditionally
Gaussian, given $\mathcal{F}_{k-1},$ and the conditional law, given
$\mathcal{F}_{k-1},$ is the same as given $\mathcal{F}_{k-2}.$

\begin{proof}
[Inductive proof of (C1') and (C2)]The case $k=1$ is trivial. We first prove
(C2) for $k\geq2,$ using (C1'), (C2) up to $k-1.$ We claim that%
\begin{equation}
\mathbf{m}^{\left(  k\right)  }=\operatorname*{Th}\left(  \mathbf{g}^{\left(
k-1\right)  }\mathbf{M}^{\left(  k-1\right)  }+\mathbf{R}^{\left(  k-2\right)
}\right)  ,\label{AltRep1}%
\end{equation}
where $\mathbf{R}^{\left(  k-2\right)  }$ stands for a generic $\mathcal{F}%
_{k-2}$-measurable random variable, not necessarily the same at different occurrences.

As $\mathbf{g}^{\left(  k-1\right)  }\mathbf{M}^{\left(  k-1\right)
}=\left\Vert \mathbf{M}^{\left(  k-1\right)  }\right\Vert \mathbf{\xi
}^{\left(  k-1\right)  },$ and $\mathbf{M}^{\left(  k-1\right)  }$ is
$\mathcal{F}_{k-2}$-measurable, by the induction hypothesis, it follows from
(\ref{AltRep1}) that $\mathbf{m}^{\left(  k\right)  }$ is $\mathcal{F}_{k-1}%
$-measurable The statements for $\mathbf{M}^{\left(  k\right)  },\mathbf{\phi
}^{\left(  k\right)  }$ are then trivial consequences.

We therefore have to prove (\ref{AltRep1}). We prove by induction on $j$ that%
\begin{equation}
\mathbf{m}^{\left(  k\right)  }=\operatorname*{Th}\left(  \mathbf{g}^{\left(
j\right)  }\mathbf{M}^{\left(  k-1,j-1\right)  }+\mathbf{R}^{\left(
k-2\right)  }\right)  .\label{AltRep2}%
\end{equation}
The case $j=1$ follows from the definition of $\mathbf{m}^{\left(  k\right)
}, $ and the case $j=k-1$ is (\ref{AltRep1}).

Assume that (\ref{AltRep2}) is true for $j<k-1.$ We replace $\mathbf{g}%
^{\left(  j\right)  }$ by $\mathbf{g}^{\left(  j+1\right)  }$ through the
recursive definition%
\begin{align*}
\mathbf{m}^{\left(  k\right)  }  & =\operatorname*{Th}\left(  \mathbf{g}%
^{\left(  j+1\right)  }\mathbf{M}^{\left(  k-1,j-1\right)  }+\pi
_{\mathcal{L}_{j}}\left(  \mathbf{g}^{\left(  j\right)  }\right)
\mathbf{M}^{\left(  k-1,j-1\right)  }+\mathbf{R}^{\left(  k-2\right)  }\right)
\\
& =\operatorname*{Th}\left(  \mathbf{g}^{\left(  j+1\right)  }\mathbf{M}%
^{\left(  k-1,j-1\right)  }+\mathbf{R}^{\left(  k-2\right)  }\right)  ,
\end{align*}
as $\pi_{\mathcal{L}_{j}}\left(  \mathbf{g}^{\left(  j\right)  }\right)  $ is
$\mathcal{F}_{j}$-measurable and therefore $\pi_{\mathcal{L}_{j}}\left(
\mathbf{g}^{\left(  j\right)  }\right)  \mathbf{M}^{\left(  k-1,j-1\right)  }$
is $\mathcal{F}_{k-2}$-measurable

Using (\ref{key1}), one gets $\mathbf{g}^{\left(  j+1\right)  }\mathbf{M}%
^{\left(  j\right)  }=0,$ and therefore%
\[
\mathbf{g}^{\left(  j+1\right)  }\mathbf{M}^{\left(  k-1,j-1\right)
}=\mathbf{g}^{\left(  j+1\right)  }\mathbf{M}^{\left(  k-1,j\right)  }.
\]
This proves (\ref{AltRep1}), and therefore (C2) for $k.$

We next prove (C1') for $k.$%
\[
\mathbf{g}^{\left(  k\right)  }=\mathbf{g}^{\left(  k-1\right)  }%
-\pi_{\mathcal{L}_{k-1}}\left(  \mathbf{g}^{\left(  k-1\right)  }\right)  .
\]
We condition on $\mathcal{F}_{k-2}.$ By (C2), $\mathbf{M}^{\left(  k-1\right)
}$ is $\mathcal{F}_{k-2}$-measurable As $\mathbf{g}^{\left(  k-1\right)  },$
conditioned on $\mathcal{F}_{k-3}$, is Gaussian, and independent of
$\mathcal{F}_{k-2},$ it has the same distribution also conditioned on
$\mathcal{F}_{k-2}.$ By the construction of $\mathbf{g}^{\left(  k\right)  },$
this variable is, conditioned on $\mathcal{F}_{k-2},$ independent of
$\mathcal{F}_{k-1},$ and conditionally Gaussian.
\end{proof}

\begin{lemma}
\label{Le_Orth1}For $m<k,$ one has%
\[
\mathbf{g}^{\left(  k\right)  }\mathbf{\phi}^{\left(  m\right)  }=0.
\]

\end{lemma}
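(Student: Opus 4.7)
The plan is to prove the identity by induction on $k\geq m+1$, with $m$ fixed. It is understood that on the exceptional event $\{\|\mathbf{M}^{(m)}\|=0\}$, which has exponentially small probability by (\ref{AJ_est}), $\mathbf{\phi}^{(m)}=\mathbf{1}$ is a convention for definiteness and the identity is not meant to hold; on its complement $\mathbf{\phi}^{(m)}$ is a scalar multiple of $\mathbf{M}^{(m)}$. For the base case $k=m+1$, equation (\ref{key1}) applied with $\mathbf{y}=\mathbf{g}^{(m)}$, $\mathbf{x}=\mathbf{M}^{(m)}$ and $\mathcal{L}=\mathcal{L}_m$ yields
\[
\mathbf{g}^{(m+1)}\mathbf{M}^{(m)}=\bigl(\mathbf{g}^{(m)}-\pi_{\mathcal{L}_m}(\mathbf{g}^{(m)})\bigr)\mathbf{M}^{(m)}=0,
\]
and hence $\mathbf{g}^{(m+1)}\mathbf{\phi}^{(m)}=0$.

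For the inductive step, fix $k>m+1$ and assume $\mathbf{g}^{(k-1)}\mathbf{\phi}^{(m)}=0$. The recursive definition $\mathbf{g}^{(k)}=\mathbf{g}^{(k-1)}-\pi_{\mathcal{L}_{k-1}}(\mathbf{g}^{(k-1)})$ reduces the task to showing $\pi_{\mathcal{L}_{k-1}}(\mathbf{g}^{(k-1)})\mathbf{\phi}^{(m)}=0$. By (C2), $\mathbf{\phi}^{(m)}$ is $\mathcal{F}_{m-1}$-measurable, and since $m-1\leq k-2$, each $\phi^{(m)}_j$ is measurable with respect to the conditioning $\sigma$-field $\mathcal{F}_{k-2}$ used in defining $\pi_{\mathcal{L}_{k-1}}$. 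Remark \ref{Rem_Proj} therefore applies entrywise: $\pi_{\mathcal{L}_{k-1}}(g^{(k-1)}_{ij})\phi^{(m)}_j=\pi_{\mathcal{L}_{k-1}}(g^{(k-1)}_{ij}\phi^{(m)}_j)$. Summing over $j$ and using linearity of the projection gives
\[
\bigl(\pi_{\mathcal{L}_{k-1}}(\mathbf{g}^{(k-1)})\mathbf{\phi}^{(m)}\bigr)_i=\sum_j\pi_{\mathcal{L}_{k-1}}(g^{(k-1)}_{ij}\phi^{(m)}_j)=\pi_{\mathcal{L}_{k-1}}\bigl((\mathbf{g}^{(k-1)}\mathbf{\phi}^{(m)})_i\bigr)=\pi_{\mathcal{L}_{k-1}}(0)=0,
\]
by the induction hypothesis, completing the step.

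There is no real obstacle here; the only technical point worth flagging is the use of Remark \ref{Rem_Proj}, which requires $\mathbf{\phi}^{(m)}$ to be measurable with respect to the correct conditioning $\sigma$-field, and this measurability is exactly what property (C2) supplies at every step of the induction. Conceptually, the argument simply turns the matrix-vector product $\pi_{\mathcal{L}_{k-1}}(\mathbf{g}^{(k-1)})\mathbf{\phi}^{(m)}$ into the entrywise projection of the already-vanishing vector $\mathbf{g}^{(k-1)}\mathbf{\phi}^{(m)}$.
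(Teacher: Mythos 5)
Your proof is correct and follows essentially the same route as the paper: (\ref{key1}) supplies the base case, and Remark \ref{Rem_Proj} together with the measurability from (C2) gives the inductive step via $\pi_{\mathcal{L}_{k-1}}(\mathbf{g}^{(k-1)})\mathbf{\phi}^{(m)}=\pi_{\mathcal{L}_{k-1}}(\mathbf{g}^{(k-1)}\mathbf{\phi}^{(m)})=0$. The only differences are organizational — you fix $m$ and induct on $k\ge m+1$ rather than running a single global induction on $k$, and you spell out the entrywise application of the remark — so the content coincides with the paper's argument.
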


\begin{proof}
The proof is by induction on $k.$ For $k=1,$ there is nothing to prove.

Assume that the statement is proved up to $k.$ We want to prove $\mathbf{g}%
^{\left(  k+1\right)  }\mathbf{\phi}^{\left(  m\right)  }=0$ for $m\leq k.$
The case $m=k$ is covered by (\ref{key1}). For $m<k,$ it follows by Remark
\ref{Rem_Proj}, as $\mathbf{\phi}^{\left(  m\right)  }$ is $\mathcal{F}_{k-1}%
$-measurable, that%
\[
\pi_{\mathcal{L}_{k}}\left(  \mathbf{g}^{\left(  k\right)  }\right)
\mathbf{\phi}^{\left(  m\right)  }=\pi_{\mathcal{L}_{k}}\left(  \mathbf{g}%
^{\left(  k\right)  }\mathbf{\phi}^{\left(  m\right)  }\right)  ,
\]
and therefore%
\begin{align*}
\mathbf{g}^{\left(  k+1\right)  }\mathbf{\phi}^{\left(  m\right)  }  &
=\mathbf{g}^{\left(  k\right)  }\mathbf{\phi}^{\left(  m\right)  }%
-\pi_{\mathcal{L}_{k}}\left(  \mathbf{g}^{\left(  k\right)  }\right)
\mathbf{\phi}^{\left(  m\right)  }\\
& =\mathbf{g}^{\left(  k\right)  }\mathbf{\phi}^{\left(  m\right)  }%
-\pi_{\mathcal{L}_{k}}\left(  \mathbf{g}^{\left(  k\right)  }\mathbf{\phi
}^{\left(  m\right)  }\right)  =0,
\end{align*}
as $\mathbf{g}^{\left(  k\right)  }\mathbf{\phi}^{\left(  m\right)  }=0$ by
the symmetry of $\mathbf{g}^{\left(  k\right)  }$ and the induction hypothesis.
\end{proof}

\begin{lemma}
\label{Li_Orth2}If $m<k,$ then%
\[
\sum_{i}\xi_{i}^{\left(  k\right)  }\phi_{i}^{\left(  m\right)  }=0.
\]

\end{lemma}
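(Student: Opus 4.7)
The plan is to unfold the definition $\boldsymbol{\xi}^{(k)} = \mathbf{g}^{(k)} \boldsymbol{\phi}^{(k)}$ and use symmetry of $\mathbf{g}^{(k)}$ to shift the action of the matrix onto $\boldsymbol{\phi}^{(m)}$, after which Lemma \ref{Le_Orth1} finishes the job immediately.

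More explicitly, I would write
\[
\sum_{i} \xi_{i}^{(k)} \phi_{i}^{(m)} = \sum_{i} \bigl(\mathbf{g}^{(k)} \boldsymbol{\phi}^{(k)}\bigr)_{i}\, \phi_{i}^{(m)} = \sum_{i,j} g_{ij}^{(k)} \phi_{j}^{(k)} \phi_{i}^{(m)}.
\]
Since $\mathbf{g}^{(k)}$ is symmetric (the projection $\pi_{\mathcal{L}_{k-1}}$ preserves symmetry, as noted before Remark \ref{Rem_Proj}, and $\mathbf{g}^{(1)}=\mathbf{g}$ is symmetric), I can rewrite this double sum as $\sum_{j} \phi_{j}^{(k)} \bigl(\mathbf{g}^{(k)} \boldsymbol{\phi}^{(m)}\bigr)_{j}$. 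Because $m<k$, Lemma \ref{Le_Orth1} gives $\mathbf{g}^{(k)} \boldsymbol{\phi}^{(m)} = 0$, and the conclusion follows.

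There is essentially no obstacle here; the only subtlety worth checking is that the matrices $\mathbf{g}^{(k)}$ remain symmetric throughout the iterative construction, but this is exactly the observation made right after the definition of $\pi_{\mathcal{L}}$ (if $\mathbf{y}$ is symmetric then so is $\pi_{\mathcal{L}}(\mathbf{y})$), so symmetry is preserved inductively from $\mathbf{g}^{(1)} = \mathbf{g}$.
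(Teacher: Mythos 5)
Your proof is correct and is essentially identical to the paper's: both expand $\xi_i^{(k)}=\sum_j g_{ij}^{(k)}\phi_j^{(k)}$, swap the order of summation, use the symmetry of $\mathbf{g}^{(k)}$, and conclude via Lemma \ref{Le_Orth1}. The remark about symmetry being preserved by the projection is the same observation the paper relies on.
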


\begin{proof}%
\begin{align*}
\sum_{i}\xi_{i}^{\left(  k\right)  }\phi_{i}^{\left(  m\right)  }  & =\sum
_{i}\sum_{j}g_{ij}^{\left(  k\right)  }\phi_{j}^{\left(  k\right)  }\phi
_{i}^{\left(  m\right)  }\\
& =\sum_{j}\phi_{j}^{\left(  k\right)  }\sum_{i}g_{ij}^{\left(  k\right)
}\phi_{i}^{\left(  m\right)  }\\
& =\sum_{j}\phi_{j}^{\left(  k\right)  }\sum_{i}g_{ji}^{\left(  k\right)
}\phi_{i}^{\left(  m\right)  }=0
\end{align*}
for $m<k,$ by the previous lemma.
\end{proof}

\section{Computation of the conditional covariances of $\mathbf{g}^{\left(
k\right)  }.$\label{Sect_Gauss}}

We introduce some more notations.

We write $O_{k}\left(  N^{-r}\right)  $ for a generic $\mathcal{F}_{k}%
$-measurable random variable $X$ which satisfies%
\[
\mathbb{P}\left(  N^{r}X\geq K\right)  \leq C\exp\left[  -N/C\right]  ,
\]
for some $K>0.$ The constants $C,K>0$ here may depend on $h,\beta,$ and the
level $k,$ and on the formula where they appear, but on nothing else, in
particular not on $N,$ and any further indices. For instance, if we write%
\[
X_{ij}=Y_{ij}+O_{k}\left(  N^{-5}\right)  ,
\]
we mean that there exists $C\left(  \beta,h,k\right)  ,\ K\left(
\beta,h,k\right)  >0$ with%
\[
\sup_{ij}\mathbb{P}\left(  N^{5}\left\vert X_{ij}-Y_{ij}\right\vert \geq
K\right)  \leq C\exp\left[  -N/C\right]  .
\]
Furthermore, in such a case, it is tacitly assumed that $X_{ij}-Y_{ij}$ are
$\mathcal{F}_{k}$-measurable

Evidently, if $X,Y$ are $O_{k}\left(  N^{-r}\right)  ,$ then $X+Y$ is
$O_{k}\left(  N^{-r}\right)  ,$ and if $X$ is $O_{k}\left(  N^{-r}\right)  ,$
and $Y$ is $O_{k}\left(  N^{-s}\right)  ,$ then $XY$ is $O_{k}\left(
N^{-r-s}\right)  .$

We write $\mathbb{E}_{k}$ for the conditional expectation, given
$\mathcal{F}_{k}.$

We will finally prove the validity of the following relations:%
\begin{equation}
\mathbb{E}_{k-2}g_{ij}^{\left(  k\right)  2}=\frac{1}{N}+O_{k-2}\left(
N^{-2}\right)  .\label{GaussMain1}%
\end{equation}%
\begin{equation}
\mathbb{E}_{k-2}g_{ij}^{\left(  k\right)  }g_{jt}^{\left(  k\right)  }%
=-\sum_{m=1}^{k-1}\frac{\phi_{i}^{\left(  m\right)  }\phi_{t}^{\left(
m\right)  }}{N^{2}}+O_{k-2}\left(  N^{-3}\right)  ,\ \forall t\neq
i,j\label{GaussMain2}%
\end{equation}%
\begin{equation}
\mathbb{E}_{k-2}g_{ij}^{\left(  k\right)  }g_{st}^{\left(  k\right)  }%
=\frac{\alpha_{ijst}^{\left(  k\right)  }}{N^{3}}+O_{k-2}\left(
N^{-4}\right)  ,\ \mathrm{if\ }\left\{  s,t\right\}  \cap\left\{  i,j\right\}
=\emptyset\label{GaussMain3}%
\end{equation}
where%
\[
\alpha_{ijst}^{\left(  k\right)  }=\sum_{m=1}^{k-1}\sum_{A\subset\left\{
i,j,s,t\right\}  }\lambda_{m,A}^{\left(  k\right)  }\phi_{A}^{\left(
m\right)  }%
\]
with%
\[
\phi_{A}^{\left(  m\right)  }\overset{\mathrm{def}}{=}\prod_{u\in A}\phi
_{u}^{\left(  m\right)  }.
\]
The $\lambda_{m,A}^{\left(  k\right)  }$ are real numbers, not random
variables, which depend on $A$ only through the type of subset which is taken.
For instance, there is only one number (for every $m,k$) if all four indices
are taken.

The main result of this section is:

\begin{proposition}
\label{Th_Gauss_Main}Let $J\in\mathbb{N}$, assume $\operatorname*{COND}\left(
J\right)  ,$ and assume the validity of (\ref{GaussMain1}) - (\ref{GaussMain3}%
) hold for $k\leq J.$ Then they hold for $k=J+1.$
\end{proposition}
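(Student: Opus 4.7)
By property (C1') with $k=J+1$, the conditional law of $\mathbf{g}^{(J+1)}$ given $\mathcal{F}_{J-1}$ is centered Gaussian and therefore determined by its second moments. Using the orthogonal decomposition $g_{ij}^{(J+1)}=g_{ij}^{(J)}-\pi_{\mathcal{L}_J}(g_{ij}^{(J)})$ in $L^{2}(\mathbb{P}\mid\mathcal{F}_{J-1})$,
\[
\mathbb{E}_{J-1}\bigl[g_{ij}^{(J+1)}g_{st}^{(J+1)}\bigr]=\mathbb{E}_{J-1}\bigl[g_{ij}^{(J)}g_{st}^{(J)}\bigr]-\mathbb{E}_{J-1}\bigl[\pi_{\mathcal{L}_J}(g_{ij}^{(J)})\,\pi_{\mathcal{L}_J}(g_{st}^{(J)})\bigr].
\]
The first term on the right is supplied by the induction hypothesis at level $k=J$ (using that by (C1') the conditional laws of $\mathbf{g}^{(J)}$ under $\mathbb{E}_{J-1}$ and $\mathbb{E}_{J-2}$ coincide), so everything reduces to computing the projection covariance to precision $N^{-2}$ for (\ref{GaussMain1})--(\ref{GaussMain2}) and $N^{-3}$ for (\ref{GaussMain3}).

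I would carry out the projection as a conditional Gaussian regression. Put $V_{l}=(\mathbf{g}^{(J)}\mathbf{M}^{(J)})_{l}=\|\mathbf{M}^{(J)}\|\,\xi_{l}^{(J)}$, and define the $\mathcal{F}_{J-1}$-measurable objects $G_{lp}=\mathbb{E}_{J-1}[V_{l}V_{p}]$ and $b_{l}^{(ij)}=\mathbb{E}_{J-1}[g_{ij}^{(J)}V_{l}]$, so that $\mathbb{E}_{J-1}[\pi(g_{ij}^{(J)})\pi(g_{st}^{(J)})]=(b^{(ij)})^{\top}G^{-1}b^{(st)}$. Plugging (\ref{GaussMain1})--(\ref{GaussMain3}) at level $k=J$ into these sums yields
\[
G_{ll}=\|\mathbf{M}^{(J)}\|^{2}+O_{J-1}(N^{-1}),\qquad G_{lp}=\tfrac{1}{N}M_{l}^{(J)}M_{p}^{(J)}+O_{J-1}(N^{-2})\ (l\ne p),
\]
\[
b_{l}^{(ij)}=\tfrac{1}{N}\bigl(M_{j}^{(J)}\delta_{li}+M_{i}^{(J)}\delta_{lj}\bigr)+O_{J-1}(N^{-2}).
\]
The crucial input here is that the exact orthogonalities $\langle\mathbf{M}^{(J)},\mathbf{\phi}^{(m)}\rangle=0$ for $m<J$ (built into the definition of $\mathbf{M}^{(J)}$) collapse what would otherwise be $O(1)$ contributions to $G_{lp}$ and $b_l^{(ij)}$ coming from the $-\sum_{m}\phi\phi/N^{2}$ piece of (\ref{GaussMain2}) when summed against $M^{(J)}_s M^{(J)}_t$. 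Inverting $G$ by Sherman--Morrison on its explicit rank-one piece $MM^{\top}/N$, which is well-conditioned because $\|\mathbf{M}^{(J)}\|\geq\delta_J$ on $A_{J}$, the leading projection covariance becomes
\[
\bigl(b^{(ij)}\bigr)^{\!\top}G^{-1}b^{(st)}=\frac{\phi_{i}^{(J)}\phi_{s}^{(J)}\delta_{jt}+\phi_{i}^{(J)}\phi_{t}^{(J)}\delta_{js}+\phi_{j}^{(J)}\phi_{s}^{(J)}\delta_{it}+\phi_{j}^{(J)}\phi_{t}^{(J)}\delta_{is}}{N^{2}}+\text{(subleading)}.
\]

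Subtracting then splits cleanly into the three announced cases. When $(s,t)=(i,j)$ all four deltas fire and the correction is $-\bigl[(\phi_{i}^{(J)})^{2}+(\phi_{j}^{(J)})^{2}\bigr]/N^{2}=O_{J-1}(N^{-2})$, giving (\ref{GaussMain1}) at $k=J+1$. In the shared-index pattern of (\ref{GaussMain2}) with common index $j$ and $t\ne i,j$, exactly one delta survives and contributes $-\phi_{i}^{(J)}\phi_{t}^{(J)}/N^{2}$, extending the sum $\sum_{m=1}^{J-1}$ to $\sum_{m=1}^{J}$ as required. In the disjoint case of (\ref{GaussMain3}) all four deltas vanish at leading order and only $O(N^{-3})$ terms survive: the $\alpha_{ijst}^{(J)}/N^{3}$ inherited from the hypothesis, the Sherman--Morrison correction to $G^{-1}$, and the subleading parts of $b_{l}^{(ij)}$ for $l\notin\{i,j\}$ produced by feeding (\ref{GaussMain2}) into $\sum_{s}M_{s}^{(J)}\mathbb{E}_{J-1}[g_{ij}^{(J)}g_{ls}^{(J)}]$. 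One checks these regroup as $\alpha_{ijst}^{(J+1)}/N^{3}+O(N^{-4})$ in which the $\lambda_{m,A}^{(J+1)}$ depend only on the \emph{type} of $A\subseteq\{i,j,s,t\}$; this index symmetry is automatic from the symmetry of $\mathbf{g}$ and the symmetric roles of $(i,j)$ and $(s,t)$ in the regression.

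The main obstacle is the combinatorial bookkeeping at order $N^{-3}$ in the disjoint case. One has to expand $b^{(ij)}$ and $G^{-1}$ one order beyond their leading terms, repeatedly use the exact orthogonalities $\langle\mathbf{M}^{(J)},\mathbf{\phi}^{(m)}\rangle=0$ for $m<J$ and $\sum_{i}M_{i}^{(J)}=0$ (since $\mathbf{\phi}^{(1)}=\mathbf{1}$) to kill sums that would otherwise be of the wrong scale, and verify that every surviving index polynomial reorganizes as $\phi_{A}^{(m)}$ with a type-only coefficient. Finally, all approximations must be promoted to the exponential ($\simeq$) precision, which is achieved by restricting throughout to the high-probability event $A_{J}$ of (\ref{AJ_Def}); this is where $\operatorname{COND}(J)$ enters the argument.
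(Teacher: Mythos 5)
Your proposal replaces the paper's device (an explicit guess $\hat{g}^{(J)}$ for the projection plus a small correction $\sum_s x^{(J)}_{ij,s}\xi_s^{(J)}$, with the $x$'s constrained to be orthogonal to the $\phi^{(m)}$, $m<J$) by a direct Gaussian-regression computation $(b^{(ij)})^{\top}G^{-1}b^{(st)}$. This is a legitimate reformulation, but your computation of the Gram matrix $G$ is wrong and the error is structural, not a bookkeeping slip. For $l\neq p$ one has
\[
G_{lp}=\sum_{s,t}M_s^{(J)}M_t^{(J)}\,\mathbb{E}_{J-1}\bigl[g_{ls}^{(J)}g_{pt}^{(J)}\bigr],
\]
and the diagonal slice $s=t$ contributes
\[
\sum_{s\neq l,p}M_s^{(J)2}\,\mathbb{E}_{J-1}\bigl[g_{ls}^{(J)}g_{ps}^{(J)}\bigr]
=-\|\mathbf{M}^{(J)}\|^{2}\sum_{m=1}^{J-1}\frac{\phi_l^{(m)}\phi_p^{(m)}}{N}+O_{J-1}(N^{-2}),
\]
which is $O(N^{-1})$, the same order as the rank-one piece you retain. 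It is \emph{not} killed by $\langle\mathbf{M}^{(J)},\boldsymbol{\phi}^{(m)}\rangle=0$: that orthogonality eliminates sums of the form $\sum_s M_s^{(J)}\phi_s^{(m)}$, but the $s=t$ slice produces $\sum_s M_s^{(J)2}=N\|\mathbf{M}^{(J)}\|^{2}$. Consequently $G/\|\mathbf{M}^{(J)}\|^{2}\approx I+\Phi^{(J)}-\sum_{m<J}\Phi^{(m)}$ with $\Phi^{(m)}=N^{-1}\boldsymbol{\phi}^{(m)}\boldsymbol{\phi}^{(m)\top}$, and $I-\sum_{m<J}\Phi^{(m)}$ is the orthogonal projection away from $\mathrm{span}(\boldsymbol{\phi}^{(1)},\ldots,\boldsymbol{\phi}^{(J-1)})$, so $G$ is in fact \emph{singular} (rank $N-J+1$ exactly, by Lemma \ref{Li_Orth2}); a Sherman--Morrison update on the single rank-one piece $MM^{\top}/N$ therefore cannot produce $G^{-1}$.

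The situation is rescuable, because $b^{(ij)}$ automatically satisfies $\sum_l b_l^{(ij)}\phi_l^{(m)}=0$ for $m<J$ (again Lemma \ref{Li_Orth2}), so the regression is well posed via the pseudoinverse on the range of $G$, where the operator is $I+\Phi^{(J)}+\text{small}$ and your leading answer $(\phi_i^{(J)}\phi_s^{(J)}\delta_{jt}+\cdots)/N^2$ does come out. But you need to say this explicitly; as written, your claim $G_{lp}=N^{-1}M_l^{(J)}M_p^{(J)}+O_{J-1}(N^{-2})$ and the Sherman--Morrison step are false, and the restriction to the range is precisely the nontrivial input. The paper encodes exactly this restriction by demanding $\sum_s x_{ij,s}^{(J)}\phi_s^{(m)}=0$ before inverting $I+\Phi+R$ (after which the $-\sum_m\phi\phi/N$ term can be dropped from the linear system, since it vanishes when paired with $x$). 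You should incorporate this step, or else derive directly that $b^{(ij)}\in\mathrm{range}(G)$ and carry out the pseudoinverse; in either case the fix also matters for the $O(N^{-3})$ bookkeeping in the disjoint case of (\ref{GaussMain3}), where the subleading part of $G^{+}$ contributes at the level of the $\lambda_{m,A}^{(J+1)}$ coefficients.
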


The main point with assuming $\operatorname*{COND}\left(  J\right)  $ is
(\ref{AJ_est}). On $A_{J},$ the variables $\mathbf{\phi}^{\left(  k\right)  }$
are bounded for $k\leq J.$

\begin{lemma}
\label{Le_G1}Assume (\ref{GaussMain1}) - (\ref{GaussMain3}) for $k=J,$ and
(\ref{AJ_est})$.$ Then

\begin{enumerate}
\item[a)]
\begin{equation}
\mathbb{E}_{J-1}\xi_{i}^{\left(  J\right)  2}=1+O_{J-1}\left(  N^{-1}\right)
.\label{var_xi}%
\end{equation}

\item[b)]
\begin{equation}
\mathbb{E}_{J-1}\xi_{i}^{\left(  J\right)  }\xi_{j}^{\left(  J\right)  }%
=\frac{1}{N}\phi_{i}^{\left(  J\right)  }\phi_{j}^{\left(  J\right)  }%
-\frac{1}{N}\sum_{r=1}^{J-1}\phi_{i}^{\left(  r\right)  }\phi_{j}^{\left(
r\right)  }+O_{J-1}\left(  N^{-2}\right) \label{Corr_xi}%
\end{equation}

\item[c)]
\begin{equation}
\mathbb{E}_{J-1}g_{ij}^{\left(  J\right)  }\xi_{i}^{\left(  J\right)  }%
=\frac{\phi_{j}^{\left(  J\right)  }}{N}+O_{J-1}\left(  N^{-2}\right)
.\label{corr_xi_g_diag}%
\end{equation}

\item[d)] For $s\neq i,j$%
\begin{equation}
\mathbb{E}_{J-1}g_{ij}^{\left(  J\right)  }\xi_{s}^{\left(  J\right)  }%
=-\frac{\phi_{i}^{\left(  J\right)  }}{N^{2}}\sum_{m=1}^{J-1}\phi_{j}^{\left(
m\right)  }\phi_{s}^{\left(  m\right)  }-\frac{\phi_{j}^{\left(  J\right)  }%
}{N^{2}}\sum_{m=1}^{J-1}\phi_{i}^{\left(  m\right)  }\phi_{s}^{\left(
m\right)  }+O_{J-1}\left(  N^{-3}\right)  .\label{corr_xi_g_offdiag}%
\end{equation}

\end{enumerate}
\end{lemma}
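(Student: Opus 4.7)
The approach is to expand each quantity as a double sum in the entries of $\mathbf{g}^{(J)}$, weighted by the $\mathcal{F}_{J-1}$-measurable vector $\mathbf{\phi}^{(J)}$, and then evaluate term-by-term using the assumed formulas (\ref{GaussMain1})--(\ref{GaussMain3}). These formulas, although stated under $\mathbb{E}_{J-2}$, are valid under $\mathbb{E}_{J-1}$ as well, because by (C1') the conditional law of $\mathbf{g}^{(J)}$ given $\mathcal{F}_{J-1}$ coincides with its conditional law given $\mathcal{F}_{J-2}$. For part (a) I would write
\[
\mathbb{E}_{J-1}\xi_i^{(J)2} = \sum_{k,l}\phi_k^{(J)}\phi_l^{(J)}\,\mathbb{E}_{J-1}g_{ik}^{(J)}g_{il}^{(J)},
\]
and analogously expand the quantities in (b)--(d). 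The resulting double sums are partitioned according to the overlap pattern of the two pairs of indices (equal, sharing one outer index, or disjoint), and to each piece the appropriate one of (\ref{GaussMain1})--(\ref{GaussMain3}) is applied.

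The cancellations rest on one algebraic identity: $\mathbf{\phi}^{(J)}$ is, by construction, the unit vector orthogonal to $\operatorname{span}(\mathbf{m}^{(1)},\ldots,\mathbf{m}^{(J-1)})$, which contains every $\mathbf{\phi}^{(m)}$ with $m<J$. Hence $\langle \mathbf{\phi}^{(J)},\mathbf{\phi}^{(m)}\rangle=0$ for $m<J$ and $\|\mathbf{\phi}^{(J)}\|^{2}=1$; in particular, since $\mathbf{\phi}^{(1)}=\mathbf{1}$, one has $\sum_{i}\phi_{i}^{(J)}=0$ whenever $J\geq 2$. For (a), the diagonal $k=l$ terms contribute $(1/N)\|\mathbf{\phi}^{(J)}\|^{2}=1$ via (\ref{GaussMain1}); the $k\neq l$ terms, evaluated with (\ref{GaussMain2}), telescope to $-\sum_{m=1}^{J-1}\langle\mathbf{\phi}^{(J)},\mathbf{\phi}^{(m)}\rangle^{2}=0$. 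For (c), the $k=j$ term yields the claimed $\phi_{j}^{(J)}/N$, while the remaining $k\neq i,j$ terms reduce via (\ref{GaussMain2}) to $-(1/N)\sum_{m=1}^{J-1}\phi_{j}^{(m)}\langle\mathbf{\phi}^{(J)},\mathbf{\phi}^{(m)}\rangle = 0$. Part (b) goes the same way: the term $(k,l)=(j,i)$ gives the advertised $\phi_{i}^{(J)}\phi_{j}^{(J)}/N$, the case $k=l\notin\{i,j\}$ gives $-(1/N)\sum_{m=1}^{J-1}\phi_{i}^{(m)}\phi_{j}^{(m)}$ via (\ref{GaussMain2}), and all mixed cases vanish by orthogonality.

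For the error bounds I would use (\ref{AJ_est}): on the high-probability event $A_{J}$, every $\mathbf{\phi}^{(m)}$ with $m\leq J$ is uniformly bounded, so a $\phi$-weighted sum of $N^{s}$ terms each of size $O_{J-2}(N^{-r})$ contributes $O_{J-1}(N^{s-r})$. Boundary terms from the removal of $O(1)$ forbidden indices (e.g.\ $k\in\{i,j\}$, where the covariance formulas do not apply) contribute at negligible order, as do the diagonal terms $g_{ii}^{(J)}$ appearing in the expansion.

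The main obstacle is the combinatorial bookkeeping in part (d): identity (\ref{GaussMain3}) writes $\mathbb{E}_{J-2}g_{ij}^{(J)}g_{sk}^{(J)}$ as a sum over subsets $A\subset\{i,j,s,k\}$ of monomials $\phi_{A}^{(m)}$, and one must verify, subset-type by subset-type, that after multiplying by $\phi_{k}^{(J)}$ and summing over $k\notin\{i,j,s\}$, every resulting monomial is killed either by $\sum_{k}\phi_{k}^{(J)}=0$ (when $k\notin A$) or by $\sum_{k}\phi_{k}^{(J)}\phi_{k}^{(m)}=0$ (when $k\in A$). Only after this case analysis does one see that the full (\ref{GaussMain3})-contribution to (d) is absorbed into the $O_{J-1}(N^{-3})$ error, leaving exactly the two leading terms corresponding to $k=i$ and $k=j$ that appear in (\ref{corr_xi_g_offdiag}).
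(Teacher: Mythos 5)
Your proposal follows essentially the same route as the paper: expand $\xi^{(J)}$ in the entries of $\mathbf{g}^{(J)}$, use (C1') to replace $\mathbb{E}_{J-1}$ by $\mathbb{E}_{J-2}$ so that (\ref{GaussMain1})--(\ref{GaussMain3}) apply, split the double sums by the overlap pattern of the index pairs, and kill the subleading pieces via $\langle\mathbf{\phi}^{(J)},\mathbf{\phi}^{(m)}\rangle=0$, $\Vert\mathbf{\phi}^{(J)}\Vert=1$ and boundedness of the $\mathbf{\phi}$'s on $A_J$. The subset-by-subset cancellation you describe for the (\ref{GaussMain3})-terms is exactly the mechanism the paper uses (explicitly in part b), implicitly in part d)), so the proof is correct and matches the paper's argument.
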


\begin{proof}
a) As $\mathbf{\phi}^{\left(  J\right)  }$ is $\mathcal{F}_{J-1}$-measurable,
and $\mathbf{g}^{\left(  J\right)  }$ is independent of $\mathcal{F}_{J-1},$
conditionally on $\mathcal{F}_{J-2},$ we get%
\begin{align*}
\mathbb{E}_{J-1}\xi_{i}^{\left(  J\right)  2}  & =\sum_{s,t\neq i}\phi
_{s}^{\left(  J\right)  }\phi_{t}^{\left(  J\right)  }\mathbb{E}_{J-1}\left(
g_{is}^{\left(  J\right)  }g_{it}^{\left(  J\right)  }\right)  =\sum_{s,t\neq
i}\phi_{s}^{\left(  J\right)  }\phi_{t}^{\left(  J\right)  }\mathbb{E}%
_{J-2}\left(  g_{is}^{\left(  J\right)  }g_{it}^{\left(  J\right)  }\right) \\
& =\sum_{s\neq i}\phi_{s}^{\left(  J\right)  2}\mathbb{E}_{J-2}\left(
g_{is}^{\left(  J\right)  2}\right)  +\sum_{\substack{s,t\neq i \\s\neq
t}}\phi_{s}^{\left(  J\right)  }\phi_{t}^{\left(  J\right)  }\mathbb{E}%
_{J-2}\left(  g_{is}^{\left(  J\right)  }g_{it}^{\left(  J\right)  }\right)  .
\end{align*}
Using (\ref{GaussMain1}), (\ref{GaussMain2}), and the boundedness of the
$\phi$'s on $A_{J}$, and $N^{-1}\sum_{i}\phi_{i}^{\left(  J\right)
2}=1,\ \sum_{i}\phi_{i}^{\left(  J\right)  }\phi_{i}^{\left(  m\right)  }=0$
for $m<J$, we get%
\[
\mathbb{E}_{J-1}\xi_{i}^{\left(  J\right)  2}=1+O_{J-1}\left(  N^{-1}\right)
.
\]

b)%
\[
\mathbb{E}_{J-1}\xi_{i}^{\left(  J\right)  }\xi_{j}^{\left(  J\right)  }%
=\sum_{s\neq i,t\neq j}\phi_{s}^{\left(  J\right)  }\phi_{t}^{\left(
J\right)  }\mathbb{E}_{J-2}\left(  g_{is}^{\left(  J\right)  }g_{jt}^{\left(
J\right)  }\right)
\]
We split the sum over $\left(  s,t\right)  $ into the one summand $s=j,t=i$,
in $A=\left\{  \left(  s,s\right)  :s\neq i,j\right\}  ,\ B=\left\{  \left(
j,t\right)  :t\neq i,j\right\}  ,\ C=\left\{  \left(  s,i\right)  :s\neq
i,j\right\}  ,$ and $D=\left\{  \left(  s,t\right)  :\left\{  s,t\right\}
\cap\left\{  i,j\right\}  =\emptyset\right\}  .$ The one summand $s=j,t=i$
gives $\phi_{i}^{\left(  J\right)  }\phi_{j}^{\left(  J\right)  }%
/N+O_{J-1}\left(  N^{-2}\right)  .$%
\begin{align*}
\sum_{A}  & =\sum_{s\neq i,j}\phi_{s}^{\left(  J\right)  2}\mathbb{E}%
_{J-2}\left(  g_{is}^{\left(  J\right)  }g_{js}^{\left(  J\right)  }\right)
=\sum_{s\neq i,j}\phi_{s}^{\left(  J\right)  2}\left\{  -\sum_{m=1}^{J-1}%
\frac{\phi_{i}^{\left(  m\right)  }\phi_{j}^{\left(  m\right)  }}{N^{2}%
}+O_{J-2}\left(  N^{-3}\right)  \right\} \\
& =-\sum_{m=1}^{J-1}\frac{\phi_{i}^{\left(  m\right)  }\phi_{j}^{\left(
m\right)  }}{N}+O_{J-1}\left(  N^{-2}\right)  .
\end{align*}%
\begin{align*}
\sum_{B}  & =\sum_{t\neq i,j}\phi_{j}^{\left(  J\right)  }\phi_{t}^{\left(
J\right)  }\mathbb{E}_{J-2}\left(  g_{ij}^{\left(  J\right)  }g_{jt}^{\left(
J\right)  }\right) \\
& =\sum_{t\neq i,j}\phi_{j}^{\left(  J\right)  }\phi_{t}^{\left(  J\right)
}\left\{  -\sum_{m=1}^{J-1}\frac{\phi_{i}^{\left(  m\right)  }\phi
_{t}^{\left(  m\right)  }}{N^{2}}+O_{J-2}\left(  N^{-3}\right)  \right\}  .
\end{align*}
Because $\left\langle \mathbf{\phi}^{\left(  J\right)  },\mathbf{\phi
}^{\left(  m\right)  }\right\rangle =0$ for $m<J,$ this is seen to be
$O_{J-1}\left(  N^{-2}\right)  .$ The same applies to $\sum_{C}.$

It remains to consider the last part $\sum_{D}.$ Here we have to use the
expression for $\mathbb{E}_{J-2}\left(  g_{ij}^{\left(  J\right)  }%
g_{st}^{\left(  J\right)  }\right)  $ where $\left\{  i,j\right\}
\cap\left\{  s,t\right\}  =\emptyset$ given in c) of Theorem
\ref{Th_Gauss_Main}.%
\begin{align*}
& \sum_{s,t:\left\{  s,t\right\}  \cap\left\{  i,j\right\}  =\emptyset}%
\phi_{s}^{\left(  J\right)  }\phi_{t}^{\left(  J\right)  }\left[  \frac
{1}{N^{3}}\sum_{m=1}^{J-1}\sum_{A\subset\left\{  i,j,s,t\right\}  }%
\lambda_{m,A}^{\left(  J\right)  }\phi_{A}^{\left(  m\right)  }+O_{J-2}\left(
N^{-4}\right)  \right] \\
& =\frac{1}{N^{3}}\sum_{s,t:\left\{  s,t\right\}  \cap\left\{  i,j\right\}
=\emptyset}\phi_{s}^{\left(  J\right)  }\phi_{t}^{\left(  J\right)  }%
\sum_{m=1}^{J-1}\sum_{A\subset\left\{  i,j,s,t\right\}  }\lambda
_{m,A}^{\left(  J\right)  }\phi_{A}^{\left(  m\right)  }+O_{J-2}\left(
N^{-2}\right)  .
\end{align*}
Take e.g. $A=\left\{  i,j,s\right\}  .$ Then $\lambda_{m,A}^{\left(  J\right)
}=\lambda_{m,3}^{\left(  J\right)  }$ with no further dependence of this
number on $i,j,s.$ So we get for this part for any summand on $m$ with $m<J$%
\[
\frac{1}{N^{3}}\lambda_{m,3}^{\left(  J\right)  }\sum_{s,t:\left\{
s,t\right\}  \cap\left\{  i,j\right\}  =\emptyset}\phi_{s}^{\left(  J\right)
}\phi_{t}^{\left(  J\right)  }\phi_{s}^{\left(  m\right)  }\phi_{t}^{\left(
m\right)  }\phi_{i}^{\left(  m\right)  }.
\]
Using again $\left\langle \mathbf{\phi}^{\left(  J\right)  },\mathbf{\phi
}^{\left(  m\right)  }\right\rangle =0,$ we get that this is $O_{J-1}\left(
N^{-2}\right)  .$ This applies in the same way to all the parts. Therefore b) follows.

c)%
\begin{align*}
\mathbb{E}_{J-1}g_{ij}^{\left(  J\right)  }\xi_{i}^{\left(  J\right)  }  &
=\sum_{t\neq i}\phi_{t}^{\left(  J\right)  }\mathbb{E}_{J-2}\left(
g_{ij}^{\left(  J\right)  }g_{it}^{\left(  J\right)  }\right)  =\frac{\phi
_{j}^{\left(  J\right)  }}{N}+O_{J-1}\left(  N^{-2}\right) \\
& +\sum_{t\neq i,j}\phi_{t}^{\left(  J\right)  }\left[  -\sum_{m=1}^{J-1}%
\frac{\phi_{j}^{\left(  m\right)  }\phi_{t}^{\left(  m\right)  }}{N^{2}%
}\right]  +O_{J-1}\left(  N^{-2}\right) \\
& =\frac{\phi_{j}^{\left(  J\right)  }}{N}+O_{J-1}\left(  N^{-2}\right)  ,
\end{align*}
due to the orthogonality of the $\mathbf{\phi}^{\left(  m\right)  }.$

d)%
\begin{align*}
\mathbb{E}g_{ij}^{\left(  J\right)  }\xi_{s}^{\left(  J\right)  }  &
=\sum_{t\neq s}\phi_{t}^{\left(  J\right)  }\mathbb{E}g_{ij}^{\left(
J\right)  }g_{st}^{\left(  J\right)  }\\
& =\phi_{i}^{\left(  J\right)  }\mathbb{E}g_{ij}^{\left(  J\right)  }%
g_{si}^{\left(  J\right)  }+\phi_{j}^{\left(  J\right)  }\mathbb{E}%
g_{ij}^{\left(  J\right)  }g_{sj}^{\left(  J\right)  }+O_{J-1}\left(
N^{-3}\right)  ,
\end{align*}
due again to (\ref{GaussMain3}). We therefore get%
\[
\mathbb{E}g_{ij}^{\left(  J\right)  }\xi_{s}^{\left(  J\right)  }=-\frac
{1}{N^{2}}\sum_{m=1}^{J-1}\phi_{s}^{\left(  m\right)  }\left[  \phi
_{i}^{\left(  J\right)  }\phi_{j}^{\left(  m\right)  }+\phi_{j}^{\left(
J\right)  }\phi_{i}^{\left(  m\right)  }\right]  +O_{J-1}\left(
N^{-3}\right)
\]

\end{proof}

\begin{lemma}
\label{Le_G2}We assume the same as in Lemma \ref{Le_G1}. Put%
\[
\hat{g}_{ij}^{\left(  J\right)  }\overset{\mathrm{def}}{=}g_{ij}^{\left(
J\right)  }-\frac{\phi_{i}^{\left(  J\right)  }\xi_{j}^{\left(  J\right)
}+\phi_{j}^{\left(  J\right)  }\xi_{i}^{\left(  J\right)  }}{N}+\phi
_{i}^{\left(  J\right)  }\phi_{j}^{\left(  J\right)  }\frac{1}{N^{2}}%
\sum_{r=1}^{N}\phi_{r}^{\left(  J\right)  }\xi_{r}^{\left(  J\right)  }.
\]
Then%
\begin{equation}
g_{ij}^{\left(  J+1\right)  }=\hat{g}_{ij}^{\left(  J\right)  }-\sum
_{s}x_{ij,s}^{\left(  J\right)  }\xi_{s}^{\left(  J\right)  }%
,\label{final_correction}%
\end{equation}
where the $\mathcal{F}_{J-1}$-measurable coefficients $x_{ij,s}^{\left(
J\right)  }$ satisfy%
\begin{equation}
\sum_{s}x_{ij,s}^{\left(  J\right)  }\phi_{s}^{\left(  m\right)  }=0,\ \forall
i,j,\ \forall m<J,\label{Ortho1}%
\end{equation}
with%
\begin{align*}
x_{ij,s}^{\left(  J\right)  }  & =O_{J-1}\left(  N^{-2}\right)  ,\ s\in
\left\{  i,j\right\}  ,\\
x_{ij,s}^{\left(  J\right)  }  & =O_{J-1}\left(  N^{-3}\right)  ,\ s\notin
\left\{  i,j\right\}  ,
\end{align*}

\end{lemma}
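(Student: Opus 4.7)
The plan is to solve the normal equations defining the projection explicitly, and match the leading terms with the proposed formula. Write $\pi_{\mathcal{L}_J}(g_{ij}^{(J)}) = \sum_s c_{ij,s}\xi_s^{(J)}$; then $c$ satisfies
\[
\sum_s c_{ij,s} C^{(J)}_{st} = b^{(J)}_{ij,t}\qquad\text{for all }t,
\]
with conditional second moments $C^{(J)}_{st} = \mathbb{E}_{J-1}[\xi_s^{(J)}\xi_t^{(J)}]$ and $b^{(J)}_{ij,t} = \mathbb{E}_{J-1}[g_{ij}^{(J)}\xi_t^{(J)}]$ given explicitly by Lemma \ref{Le_G1}. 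By Lemma \ref{Li_Orth2}, the $J-1$ linear relations $\sum_s\phi_s^{(r)}\xi_s^{(J)} = 0$ for $r<J$ make the representation non-unique modulo $V := \operatorname{span}\{\mathbf{\phi}^{(r)}:r<J\} \subset \mathbb{R}^N$ (with the ordinary, unweighted inner product); hence it suffices to exhibit some $c$ of the form $c^{\mathrm{main}} + x$ with $x \in W := V^\perp$, which is exactly the orthogonality claim on $x_{ij,s}$.

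A key observation is that $b^{(J)}_{ij,\cdot}\in W$, since $\sum_t\phi_t^{(m)}b^{(J)}_{ij,t} = \mathbb{E}_{J-1}[g_{ij}^{(J)}\sum_t\phi_t^{(m)}\xi_t^{(J)}] = 0$ for $m<J$. Restricted to $W$, the rank-$(J-1)$ piece $-\frac{1}{N}\sum_{r<J}\mathbf{\phi}^{(r)}(\mathbf{\phi}^{(r)})^\top$ of $C^{(J)}$ coming from Lemma \ref{Le_G1}~b) vanishes, leaving
\[
C^{(J)}\big|_W = I\big|_W + \tfrac{1}{N}\mathbf{\phi}^{(J)}(\mathbf{\phi}^{(J)})^\top + E,
\]
where $E$ collects the $O_{J-1}(N^{-1})$ diagonal and $O_{J-1}(N^{-2})$ off-diagonal remainders from Lemma \ref{Le_G1}. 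Because $\mathbf{\phi}^{(J)}\in W$ (by $\sum_s\phi_s^{(J)}\phi_s^{(m)} = 0$ for $m<J$) and $\sum_s(\phi_s^{(J)})^2 = N$, Sherman--Morrison gives $(I|_W + \tfrac{1}{N}\mathbf{\phi}^{(J)}(\mathbf{\phi}^{(J)})^\top)^{-1} = I|_W - \tfrac{1}{2N}\mathbf{\phi}^{(J)}(\mathbf{\phi}^{(J)})^\top$. Therefore the unique $c_0\in W$ solving the normal equations is
\[
c_0 = b^{(J)}_{ij,\cdot} - \tfrac{1}{2N}\Bigl(\sum_s\phi_s^{(J)}b^{(J)}_{ij,s}\Bigr)\mathbf{\phi}^{(J)} + (\text{$E$-correction}),
\]
and Lemma \ref{Le_G1}~c),~d) yield $\sum_s\phi_s^{(J)}b^{(J)}_{ij,s} = 2\phi_i^{(J)}\phi_j^{(J)}/N + O_{J-1}(N^{-2})$ (the sum over $s\neq i,j$ being subleading thanks to $\sum_s\phi_s^{(J)}\phi_s^{(m)} = 0$ for $m<J$).

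Setting $x_{ij,s} := c_0^{(s)} - (\operatorname{proj}_W c^{\mathrm{main}})^{(s)}$ gives $x\in W$ by construction, and $c^{\mathrm{main}}+x \equiv c_0 \pmod V$ yields (\ref{final_correction}). For $s\in\{i,j\}$, matching leading terms gives $x_{ij,s} = O_{J-1}(N^{-2})$, the residue coming from $b^{(J)}_{ij,i}$. For $s\notin\{i,j\}$ the improved bound $O_{J-1}(N^{-3})$ results from a clean cancellation: the $O(N^{-2})$ terms $-\frac{\phi_i^{(J)}}{N^2}\sum_{m<J}\phi_j^{(m)}\phi_s^{(m)} - \frac{\phi_j^{(J)}}{N^2}\sum_{m<J}\phi_i^{(m)}\phi_s^{(m)}$ inside $c_0^{(s)}$ (from $b^{(J)}_{ij,s}$) cancel exactly against the $V$-component of $c^{\mathrm{main}}$, which equals $\sum_{m<J}\frac{\phi_i^{(J)}\phi_j^{(m)}+\phi_j^{(J)}\phi_i^{(m)}}{N^2}\phi_s^{(m)}$. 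The main obstacle is handling the $E$-correction: $E$ has only per-entry size $O(N^{-2})$, but its operator norm may be as large as $O(N^{-1})$, so a naive bound on $Eb$ is too weak for the coordinate-wise $O(N^{-3})$ claim. Controlling it forces one to exploit the explicit structural form of the $O(N^{-2})$ remainders in Lemma \ref{Le_G1} (not merely their entry-wise sizes), together with the orthogonalities $\langle\mathbf{\phi}^{(J)},\mathbf{\phi}^{(m)}\rangle = 0$ for $m<J$, rather than to treat $E$ as a generic bounded perturbation.
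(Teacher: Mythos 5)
Your proposal follows, in essence, the paper's own route: the paper too reduces the claim to the normal equations for the projection of $g_{ij}^{\left(J\right)}$ onto $\mathcal{L}_{J}$, uses $\sum_{s}\xi_{s}^{\left(J\right)}\phi_{s}^{\left(m\right)}=0$ for $m<J$ (Lemma \ref{Li_Orth2}) to fix the representative satisfying (\ref{Ortho1}), takes the conditional covariances from Lemma \ref{Le_G1}, inverts $I+\Phi$ with $\Phi=\left(N^{-1}\phi_{i}^{\left(J\right)}\phi_{j}^{\left(J\right)}\right)$ by the same rank-one identity $\left(I+\Phi\right)^{-1}=I-\Phi/2$ that you get from Sherman--Morrison, and treats the remainder matrix by a Neumann series. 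The only genuine difference is bookkeeping: the paper subtracts the explicit main term first, i.e.\ it solves the system with right-hand side $y_{t}=\mathbb{E}_{J-1}\left(\hat{g}_{ij}^{\left(J\right)}\xi_{t}^{\left(J\right)}\right)$ and shows $y_{t}=O_{J-1}\left(N^{-2}\right)$ for $t\in\left\{i,j\right\}$ and $O_{J-1}\left(N^{-3}\right)$ otherwise; that computation is exactly your cancellation between the leading part of $b_{ij,\cdot}$, the $V$-component of $c^{\mathrm{main}}$, and the Sherman--Morrison correction, carried out before inverting rather than after, which makes the smallness of $x$ immediate.

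One correction: the ``main obstacle'' you flag at the end is not an obstacle, and your claim that the entry-wise sizes of $E$ do not suffice is mistaken. If $v$ is a vector with $v_{i},v_{j}=O_{J-1}\left(N^{-1}\right)$ and $v_{t}=O_{J-1}\left(N^{-2}\right)$ for $t\notin\left\{i,j\right\}$, then for $s\notin\left\{i,j\right\}$
\[
\left(Ev\right)_{s}=E_{ss}v_{s}+E_{si}v_{i}+E_{sj}v_{j}+\sum_{t\neq s,i,j}E_{st}v_{t}
=O_{J-1}\left(N^{-3}\right),
\]
using only the diagonal bound $O_{J-1}\left(N^{-1}\right)$, the off-diagonal bound $O_{J-1}\left(N^{-2}\right)$, and the fact that the $O_{J-1}$ constants are uniform in the indices (so the union bound over polynomially many exceptional events keeps the probability exponentially small); similarly $\left(Ev\right)_{s}=O_{J-1}\left(N^{-2}\right)$ for $s\in\left\{i,j\right\}$, and the same entry profile (with a gain) is reproduced at every further order of the Neumann series, the $\Phi$-parts being controlled by $\left\langle \mathbf{\phi}^{\left(J\right)},\mathbf{\phi}^{\left(m\right)}\right\rangle=0$ and $\sum_{s}\phi_{s}^{\left(J\right)2}=N$ as you already use. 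This coordinate-wise propagation is precisely how the paper handles its remainder matrix $R$; only a pure operator-norm estimate would be too weak, and no finer structural information about the Lemma \ref{Le_G1} error terms is required. So you should simply finish your last step by this routine entry-wise computation rather than asserting that additional structure is needed.
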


\begin{proof}
The existence of $\mathcal{F}_{J-1}$-measurable coefficients $x_{ij,s}%
^{\left(  J\right)  }$ comes from linear algebra.

Remark that%
\[
\sum_{s}\xi_{s}^{\left(  J\right)  }\phi_{s}^{\left(  m\right)  }=\sum
_{s,j}\phi_{s}^{\left(  m\right)  }g_{sj}^{\left(  J\right)  }\phi
_{j}^{\left(  J\right)  }=\sum_{j}\phi_{j}^{\left(  J\right)  }\left[
\sum\nolimits_{s}g_{js}^{\left(  J\right)  }\phi_{s}^{\left(  m\right)
}\right]  =0.
\]
Therefore, we can replace the $x_{ij,\cdot}^{\left(  J\right)  }$ by%
\[
x_{ij,\cdot}^{\left(  J\right)  }-\sum_{m=1}^{J-1}\left\langle x_{ij,\cdot
}^{\left(  J\right)  },\mathbf{\phi}^{\left(  m\right)  }\right\rangle
\mathbf{\phi}^{\left(  m\right)  }%
\]
which satisfy the desired property (\ref{Ortho1}).

We keep $i,j$ fixed for the moment and write $x_{s}$ for $x_{ij,s}^{\left(
J\right)  }.$ The requirement for them is that for all $t$%
\[
\mathbb{E}_{J-1}\left(  \left(  \hat{g}_{ij}^{\left(  J\right)  }%
-\sum\nolimits_{s}x_{s}\xi_{s}^{\left(  J\right)  }\right)  \xi_{t}^{\left(
J\right)  }\right)  =0.
\]

From Lemma \ref{Le_G2}, we get%
\[
\mathbb{E}_{J-1}\left(  \hat{g}_{ij}^{\left(  J\right)  }\xi_{i}^{\left(
J\right)  }\right)  =O_{J-1}\left(  N^{-2}\right)  ,
\]
and the same for $\mathbb{E}_{J-1}\left(  \hat{g}_{ij}^{\left(  J\right)  }%
\xi_{j}^{\left(  J\right)  }\right)  .$ For $t\notin\left\{  i,j\right\}  ,$
we have%
\begin{align*}
\mathbb{E}_{J-1}\left(  \hat{g}_{ij}^{\left(  J\right)  }\xi_{t}^{\left(
J\right)  }\right)   & =-\frac{\phi_{i}^{\left(  J\right)  }}{N^{2}}\sum
_{m=1}^{J-1}\phi_{j}^{\left(  m\right)  }\phi_{t}^{\left(  m\right)  }%
-\frac{\phi_{j}^{\left(  J\right)  }}{N^{2}}\sum_{m=1}^{J-1}\phi_{i}^{\left(
m\right)  }\phi_{t}^{\left(  m\right)  }+O_{J-1}\left(  N^{-3}\right) \\
& -\frac{\phi_{i}^{\left(  J\right)  }}{N}\left\{  \frac{1}{N}\phi
_{j}^{\left(  J\right)  }\phi_{t}^{\left(  J\right)  }-\frac{1}{N}\sum
_{m=1}^{J-1}\phi_{j}^{\left(  m\right)  }\phi_{t}^{\left(  m\right)  }%
+O_{J-1}\left(  N^{-2}\right)  \right\} \\
& -\frac{\phi_{j}^{\left(  J\right)  }}{N}\left\{  \frac{1}{N}\phi
_{i}^{\left(  J\right)  }\phi_{t}^{\left(  J\right)  }-\frac{1}{N}\sum
_{m=1}^{J-1}\phi_{i}^{\left(  m\right)  }\phi_{t}^{\left(  m\right)  }%
+O_{J-1}\left(  N^{-2}\right)  \right\} \\
& +\phi_{i}^{\left(  J\right)  }\phi_{j}^{\left(  J\right)  }\frac{1}{N^{2}%
}\sum_{r}\phi_{r}^{\left(  J\right)  }\mathbb{E}_{J-1}\xi_{r}^{\left(
J\right)  }\xi_{t}^{\left(  J\right)  }\\
& =-\frac{2}{N^{2}}\phi_{i}^{\left(  J\right)  }\phi_{j}^{\left(  J\right)
}\phi_{t}^{\left(  J\right)  }+\frac{1}{N^{2}}\phi_{i}^{\left(  J\right)
}\phi_{j}^{\left(  J\right)  }\phi_{t}^{\left(  J\right)  }\\
& +\frac{1}{N^{2}}\phi_{i}^{\left(  J\right)  }\phi_{j}^{\left(  J\right)
}\sum_{r\neq t}\phi_{r}^{\left(  J\right)  }\left\{  \frac{1}{N}\phi
_{r}^{\left(  J\right)  }\phi_{t}^{\left(  J\right)  }-\frac{1}{N}\sum
_{m=1}^{J-1}\phi_{r}^{\left(  m\right)  }\phi_{t}^{\left(  m\right)  }\right\}
\\
& +O_{J-1}\left(  N^{-3}\right)  .
\end{align*}
Due to the orthonormality of the $\mathbf{\phi},$ one gets%
\begin{align*}
\frac{1}{N}\sum_{r\neq t}\phi_{r}^{\left(  J\right)  2}  & =1+O_{J-1}\left(
N^{-1}\right)  ,\\
\sum_{r\neq t}\phi_{r}^{\left(  J\right)  }\phi_{r}^{\left(  m\right)  }  &
=O_{J-1}\left(  N^{-1}\right)  .
\end{align*}
So we get%
\[
\mathbb{E}_{J-1}\left(  \hat{g}_{ij}^{\left(  J\right)  }\xi_{t}^{\left(
J\right)  }\right)  =O_{J-1}\left(  N^{-3}\right)  .
\]
We write for the moment $y_{t}\overset{\mathrm{def}}{=}\mathbb{E}_{J-1}\left(
\hat{g}_{ij}^{\left(  J\right)  }\xi_{t}^{\left(  J\right)  }\right)  .$ The
equations for $\left\{  x_{s}\right\}  $ are%
\[
\sum_{s}x_{s}\mathbb{E}_{J-1}\xi_{s}^{\left(  J\right)  }\xi_{t}^{\left(
J\right)  }=y_{t},\ \forall t.
\]
Writing $r_{ij}$ for the $O_{J-1}\left(  N^{-2}\right)  $ error term in
(\ref{Corr_xi}), and for $j=i,$ the $O_{J-1}\left(  N^{-1}\right)  $ error
term in (\ref{var_xi}), we arrive at%
\[
\sum_{s\neq t}x_{s}\left\{  \frac{1}{N}\phi_{s}^{\left(  J\right)  }\phi
_{t}^{\left(  J\right)  }-\frac{1}{N}\sum_{m=1}^{J-1}\phi_{s}^{\left(
m\right)  }\phi_{t}^{\left(  m\right)  }+r_{st}\right\}  +x_{t}\left(
1+r_{tt}\right)  =y_{t}.
\]
In the first summand, we sum now over all $s,$ remarking that we have assumed
that $\sum_{s}x_{s}\phi_{s}^{\left(  m\right)  }=0$ for $m<J.$ The error for
not summing over the single $t$ can be incorporated into $r_{tt}.$ We
therefore arrive at%
\[
x_{t}+\phi_{t}^{\left(  J\right)  }\frac{1}{N}\sum_{s}x_{s}\phi_{s}^{\left(
J\right)  }+\sum_{s}x_{s}r_{st}=y_{t}.
\]
Write $\Phi$ for the matrix $\left(  N^{-1}\phi_{i}^{\left(  J\right)  }%
\phi_{j}^{\left(  J\right)  }\right)  $ and $R$ for $\left(  r_{ij}\right)  .$
Then we have to invert the matrix $\left(  I+\Phi+R\right)  .$ Remark that
$\left(  I+\Phi\right)  ^{-1}=I-\Phi/2.$ Therefore%
\[
\left(  I-\Phi/2\right)  \left(  I+\Phi+R\right)  =I+\left(  I-\Phi/2\right)
R.
\]
The right hand side, we can develop as a Neumann series:%
\begin{align*}
\left(  I+\Phi+R\right)  ^{-1}\left(  I+\Phi\right)   & =\left(  I+\left(
I-\Phi/2\right)  R\right)  ^{-1}\\
& =I-\left(  I-\Phi/2\right)  R+\left[  \left(  I-\Phi/2\right)  R\right]
^{2}-\cdots
\end{align*}

\[
\left(  I+\Phi+R\right)  ^{-1}=I-\frac{\Phi}{2}-\left(  I-\frac{\Phi}%
{2}\right)  R\left(  I-\frac{\Phi}{2}\right)  +\cdots.
\]
As $\left(  \Phi\mathbf{y}\right)  _{i}=O_{J-1}\left(  N^{-3}\right)  ,$ we
get the desired conclusion.
\end{proof}

\begin{proof}
[Proof of Proposition \ref{Th_Gauss_Main}]%
\begin{align}
\mathbb{E}_{J-1}\left(  g_{ij}^{\left(  J+1\right)  }g_{st}^{\left(
J+1\right)  }\right)   & =\mathbb{E}_{J-1}\left(  \hat{g}_{ij}^{\left(
J+1\right)  }\hat{g}_{st}^{\left(  J+1\right)  }\right)  +\sum_{u}%
x_{st,u}^{\left(  J\right)  }\mathbb{E}_{J-1}\left(  \xi_{u}^{\left(
J\right)  }\hat{g}_{ij}^{\left(  J\right)  }\right) \nonumber\\
& +\sum_{u}x_{ij,u}^{\left(  J\right)  }\mathbb{E}_{J-1}\left(  \xi
_{u}^{\left(  J\right)  }\hat{g}_{st}^{\left(  J\right)  }\right)
\label{Ind_split}\\
& +\sum_{u,v}x_{ij,u}^{\left(  J\right)  }x_{st,v}^{\left(  J\right)
}\mathbb{E}_{J-1}\left(  \xi_{u}^{\left(  J\right)  }\xi_{v}^{\left(
J\right)  }\right)  .\nonumber
\end{align}
The summands involving the $x^{\left(  J\right)  }$ all only give
contributions which enter the $O_{J-1}$-terms. Take for instance $s=j,\ t\neq
i,j.$ In that case, the claimed $O_{J-1}$-term is $O_{J-1}\left(
N^{-3}\right)  .$ In the last summand of (\ref{Ind_split}), there is one
summand, namely $u=v=j,$ where the $x^{\left(  J\right)  }$ are $O_{J-1}%
\left(  N^{-2}\right)  ,$ so this summand is only $O_{J-1}\left(
N^{-4}\right)  .$%
\begin{align*}
\sum_{u}x_{jt,u}^{\left(  J\right)  }\mathbb{E}_{J-1}\left(  \xi_{u}^{\left(
J\right)  }\hat{g}_{ij}^{\left(  J\right)  }\right)   & =x_{jt,i}^{\left(
J\right)  }\mathbb{E}_{J-1}\left(  \xi_{i}^{\left(  J\right)  }\hat{g}%
_{ij}^{\left(  J\right)  }\right)  +x_{jt,j}^{\left(  J\right)  }%
\mathbb{E}_{J-1}\left(  \xi_{j}^{\left(  J\right)  }\hat{g}_{ij}^{\left(
J\right)  }\right) \\
& +x_{jt,t}^{\left(  J\right)  }\mathbb{E}_{J-1}\left(  \xi_{t}^{\left(
J\right)  }\hat{g}_{ij}^{\left(  J\right)  }\right) \\
& +\sum_{u\neq i,j,t}x_{jt,u}^{\left(  J\right)  }\mathbb{E}_{J-1}\left(
\xi_{u}^{\left(  J\right)  }\hat{g}_{ij}^{\left(  J\right)  }\right)  .
\end{align*}
From Lemma \ref{Le_G1}, we get $\mathbb{E}_{J-1}\left(  \xi_{u}^{\left(
J\right)  }\hat{g}_{ij}^{\left(  J\right)  }\right)  =O_{J-1}\left(
N^{-1}\right)  $ for $u\in\left\{  i,j\right\}  ,$ and $O_{J-1}\left(
N^{-2}\right)  $ otherwise. So the above sum gives%
\begin{align*}
& O_{J-1}\left(  N^{-3}\right)  O_{J-1}\left(  N^{-1}\right)  +O_{J-1}\left(
N^{-2}\right)  O_{J-1}\left(  N^{-1}\right) \\
& +O_{J-1}\left(  N^{-2}\right)  O_{J-1}\left(  N^{-2}\right)  +NO_{J-1}%
\left(  N^{-3}\right)  O_{J-1}\left(  N^{-2}\right) \\
& =O_{J-1}\left(  N^{-3}\right)  .
\end{align*}
The other summands behave similarly. The third and fourth summand in
(\ref{Ind_split}) behave similarly.

As another case, take $\left\{  i,j\right\}  \cap\left\{  s,t\right\}
=\emptyset,$ where we have to get $O_{J-1}\left(  N^{-4}\right)  $ for the
second to fourth summand in (\ref{Ind_split}).%
\begin{align*}
\sum_{u}x_{st,u}^{\left(  J\right)  }\mathbb{E}_{J-1}\left(  \xi_{u}^{\left(
J\right)  }\hat{g}_{ij}^{\left(  J\right)  }\right)   & =\sum_{u=i,j}%
+\sum_{u=s,t}+\sum_{u\notin\left\{  i,j,s,t\right\}  }\\
& =O_{J-1}\left(  N^{-3}\right)  O_{J-1}\left(  N^{-1}\right)  +O_{J-1}\left(
N^{-2}\right)  O_{J-1}\left(  N^{-2}\right) \\
& +NO_{J-1}\left(  N^{-3}\right)  O_{J-1}\left(  N^{-2}\right) \\
& =O_{J-1}\left(  N^{-4}\right)  .
\end{align*}%
\begin{align*}
\sum_{u,v}x_{ij,u}^{\left(  J\right)  }x_{st,v}^{\left(  J\right)  }%
\mathbb{E}_{J-1}\left(  \xi_{u}^{\left(  J\right)  }\xi_{v}^{\left(  J\right)
}\right)   & =\sum_{u=v\in\left\{  i,j,s,t\right\}  }+\sum_{u=v\notin\left\{
i,j,s,t\right\}  }+\sum_{u\in\left\{  i,j\right\}  }\sum_{v\in\left\{
s,t\right\}  }\\
& +\sum_{u\in\left\{  i,j\right\}  }\sum_{v\notin\left\{  s,t\right\}  ,\neq
u}+\sum_{v\in\left\{  s,t\right\}  }\sum_{u\notin\left\{  i,j\right\}  ,\neq
v}\\
& +\sum_{u\neq v}\sum_{u\notin\left\{  i,j\right\}  }\sum_{v\notin\left\{
s,t\right\}  }\\
& =O_{J-1}\left(  N^{-5}\right)  +NO_{J-1}\left(  N^{-6}\right) \\
& +O_{J-1}\left(  N^{-2}\right)  O_{J-1}\left(  N^{-2}\right)  O_{J-1}\left(
N^{-1}\right) \\
& +NO_{J-1}\left(  N^{-2}\right)  O_{J-1}\left(  N^{-3}\right)  O_{J-1}\left(
N^{-1}\right) \\
& +NO_{J-1}\left(  N^{-2}\right)  O_{J-1}\left(  N^{-3}\right)  O_{J-1}\left(
N^{-1}\right) \\
& +N^{2}O_{J-1}\left(  N^{-3}\right)  O_{J-1}\left(  N^{-3}\right)
O_{J-1}\left(  N^{-1}\right) \\
& =O_{J-1}\left(  N^{-5}\right)  ,
\end{align*}
which is better than required.

It therefore remains to investigate $\mathbb{E}_{J-1}\left(  \hat{g}%
_{ij}^{\left(  J+1\right)  }\hat{g}_{st}^{\left(  J+1\right)  }\right)  .$

a)%
\[
\mathbb{E}_{J-1}\left(  \hat{g}_{ij}^{\left(  J\right)  2}\right)
=\mathbb{E}_{J-1}\left[  \left(  g_{ij}^{\left(  J\right)  }-\frac{\phi
_{i}^{\left(  J\right)  }\xi_{j}^{\left(  J\right)  }+\phi_{j}^{\left(
J\right)  }\xi_{i}^{\left(  J\right)  }}{N}+\frac{\phi_{i}^{\left(  J\right)
}\phi_{j}^{\left(  J\right)  }}{N^{2}}\sum_{t}\phi_{t}^{\left(  J\right)  }%
\xi_{t}^{\left(  J\right)  }\right)  ^{2}\right]  .
\]
Using Lemma \ref{Le_G1}, one easily gets that anything except $\mathbb{E}%
_{J-1}\left(  g_{ij}^{\left(  J\right)  2}\right)  $ is $O_{J-1}\left(
N^{-2}\right)  .$ $\mathbb{E}_{J-1}\left(  g_{ij}^{\left(  J\right)
2}\right)  =\mathbb{E}_{J-2}\left(  g_{ij}^{\left(  J\right)  2}\right)  $
from the conditional independence of $\mathbf{g}^{\left(  J\right)  }$ of
$\mathcal{F}_{J-1},$ given $\mathcal{F}_{J-2}.$ So the claim follows.

b)%
\begin{align*}
\mathbb{E}_{J-1}\left(  \hat{g}_{ij}^{\left(  J\right)  }\hat{g}_{jt}^{\left(
J\right)  }\right)   & =\mathbb{E}_{J-1}\Bigg[\left(  g_{ij}^{\left(
J\right)  }-\frac{\phi_{i}^{\left(  J\right)  }\xi_{j}^{\left(  J\right)
}+\phi_{j}^{\left(  J\right)  }\xi_{i}^{\left(  J\right)  }}{N}+\frac{\phi
_{i}^{\left(  J\right)  }\phi_{j}^{\left(  J\right)  }}{N^{2}}\sum_{u}\phi
_{u}^{\left(  J\right)  }\xi_{u}^{\left(  J\right)  }\right) \\
& \times\left(  g_{jt}^{\left(  J\right)  }-\frac{\phi_{j}^{\left(  J\right)
}\xi_{t}^{\left(  J\right)  }+\phi_{t}^{\left(  J\right)  }\xi_{j}^{\left(
J\right)  }}{N}+\frac{\phi_{j}^{\left(  J\right)  }\phi_{t}^{\left(  J\right)
}}{N^{2}}\sum_{u}\phi_{u}^{\left(  J\right)  }\xi_{u}^{\left(  J\right)
}\right)  \Bigg].
\end{align*}

We write $m\times n$ for the summand, we get by multiplying the $m$-th summand
in the first bracket with the $n$-th in the second. By induction hypothesis,
we get%
\[
1\times1=-\sum_{m=1}^{J-1}\frac{\phi_{i}^{\left(  m\right)  }\phi_{t}^{\left(
m\right)  }}{N^{2}}+O_{J-2}\left(  N^{-3}\right)  .
\]
In the $1\times2$-term, only the multiplication of $g_{ij}^{\left(  J\right)
}$ with $\xi_{j}^{\left(  J\right)  }$ counts, the other part giving
$O_{J-2}\left(  N^{-3}\right)  .$ Therefore%
\begin{align*}
1\times2  & =-\frac{\phi_{t}^{\left(  J\right)  }}{N}\mathbb{E}_{J-1}%
g_{ij}^{\left(  J\right)  }\xi_{j}^{\left(  J\right)  }+O_{J-1}\left(
N^{-3}\right) \\
& =-\frac{\phi_{i}^{\left(  J\right)  }\phi_{t}^{\left(  J\right)  }}{N^{2}%
}+O_{J-1}\left(  N^{-3}\right)  .
\end{align*}
$2\times1$ gives the same. In $2\times2,$ again only the matching of $\xi
_{j}^{\left(  J\right)  }$ with $\xi_{j}^{\left(  J\right)  }$ counts, so we
get%
\[
2\times s=\frac{\phi_{i}^{\left(  J\right)  }\phi_{t}^{\left(  J\right)  }%
}{N^{2}}+O_{J-1}\left(  N^{-3}\right)  .
\]
The other parts are easily seen to give $O_{J-1}\left(  N^{-3}\right)  .$ We
we have proved that%
\[
\mathbb{E}_{J-1}\left(  \hat{g}_{ij}^{\left(  J\right)  }\hat{g}_{jt}^{\left(
J\right)  }\right)  =-\sum_{m=1}^{J}\frac{\phi_{i}^{\left(  m\right)  }%
\phi_{t}^{\left(  m\right)  }}{N^{2}}+O_{J-1}\left(  N^{-3}\right)  .
\]

c) We have here $\left\{  i,j\right\}  \cap\left\{  s,t\right\}  =\emptyset.$%
\begin{align*}
\mathbb{E}_{J-1}\left(  \hat{g}_{ij}^{\left(  J\right)  }\hat{g}_{jt}^{\left(
J\right)  }\right)   & =\mathbb{E}_{J-1}\left(  g_{ij}^{\left(  J\right)
}-\frac{\phi_{i}^{\left(  J\right)  }\xi_{j}^{\left(  J\right)  }+\phi
_{j}^{\left(  J\right)  }\xi_{i}^{\left(  J\right)  }}{N}+\frac{\phi
_{i}^{\left(  J\right)  }\phi_{j}^{\left(  J\right)  }}{N^{2}}\sum_{u}\phi
_{u}^{\left(  J\right)  }\xi_{u}^{\left(  J\right)  }\right) \\
& \times\left(  g_{st}^{\left(  J\right)  }-\frac{\phi_{s}^{\left(  J\right)
}\xi_{t}^{\left(  J\right)  }+\phi_{t}^{\left(  J\right)  }\xi_{s}^{\left(
J\right)  }}{N}+\frac{\phi_{s}^{\left(  J\right)  }\phi_{t}^{\left(  J\right)
}}{N^{2}}\sum_{u}\phi_{u}^{\left(  J\right)  }\xi_{u}^{\left(  J\right)
}\right)  .
\end{align*}
The $1\times1,1\times2,2\times1,$ and $2\times2$-terms are clearly of the
desired form, either from induction hypothesis or Lemma \ref{Le_G1}.%
\[
1\times3=\frac{\phi_{s}^{\left(  J\right)  }\phi_{t}^{\left(  J\right)  }%
}{N^{2}}\sum_{u}\mathbb{E}_{J-1}\left(  g_{ij}^{\left(  J\right)  }\phi
_{u}^{\left(  J\right)  }\xi_{u}^{\left(  J\right)  }\right)  .
\]
For $u=i$ we get for the expectation $\phi_{i}^{\left(  J\right)  }\phi
_{j}^{\left(  J\right)  }/N+O_{J-1}\left(  N^{-2}\right)  ,$ so this is of the
desired form. The same applies to $u=j.$ It therefore remains%
\begin{align*}
& \frac{\phi_{s}^{\left(  J\right)  }\phi_{t}^{\left(  J\right)  }}{N^{2}}%
\sum_{u\neq i,j}\phi_{u}^{\left(  J\right)  }\mathbb{E}_{J-1}\left(
g_{ij}^{\left(  J\right)  }\xi_{u}^{\left(  J\right)  }\right) \\
& =\frac{\phi_{s}^{\left(  J\right)  }\phi_{t}^{\left(  J\right)  }}{N^{2}%
}\sum_{u\neq i,j}\phi_{u}^{\left(  J\right)  }\left\{  -\frac{1}{N^{2}}%
\sum_{m=1}^{J-1}\phi_{u}^{\left(  m\right)  }\left[  \phi_{i}^{\left(
J\right)  }\phi_{j}^{\left(  m\right)  }+\phi_{j}^{\left(  J\right)  }\phi
_{i}^{\left(  m\right)  }\right]  \right\}  +O_{J-1}\left(  N^{-4}\right)  .
\end{align*}
As $\sum_{u}\phi_{u}^{\left(  J\right)  }\phi_{u}^{\left(  m\right)  }=0,$ the
whole expression is $O_{J-1}\left(  N^{-4}\right)  .$ The other cases are
handled similarly.
\end{proof}

\section{Proof of Proposition \ref{Prop_Mhat_main}}

We assume $\operatorname{COND}\left(  J\right)  ,$ and (\ref{GaussMain1}) -
(\ref{GaussMain3}) for $k\leq J.$ By Proposition \ref{Th_Gauss_Main} of the
last section, this implies (\ref{GaussMain1}) - (\ref{GaussMain3}) for $k\leq
J+1. $ Using this, we prove now (\ref{Main1}) and (\ref{Main2}) for $k=J+1,$
so that we have proved $\operatorname{COND}\left(  J+1\right)  .$ Having
achieved this, the proof of Proposition \ref{Prop_Mhat_main} is complete.

For $j<k,$ define $\mathbf{X}^{\left(  k,0\right)  }\overset{\mathrm{def}}%
{=}0,\ \mathbf{X}^{\left(  k,j\right)  }\overset{\mathrm{def}}{=}\sum
_{t=1}^{j}\gamma_{t}\mathbf{\phi}^{\left(  t\right)  },$ and $\mathbf{X}%
^{\left(  k,k\right)  }\overset{\mathrm{def}}{=}\sum_{t=1}^{k-1}\gamma
_{t}\mathbf{\phi}^{\left(  t\right)  }+\sqrt{q-\Gamma_{k-1}^{2}}\mathbf{\phi
}^{\left(  k\right)  }.$

Remark that under $\operatorname{COND}\left(  J\right)  $%
\begin{equation}
\mathbf{m}^{\left(  k\right)  }\approx\mathbf{X}^{\left(  k,k\right)
}.\label{m_to_X}%
\end{equation}
for $k\leq J.$ Indeed%
\begin{align*}
& \left\Vert \mathbf{m}^{\left(  k\right)  }-\sum\nolimits_{m=1}%
^{k-1}\left\langle \mathbf{m}^{\left(  k\right)  },\mathbf{\phi}^{\left(
m\right)  }\right\rangle \mathbf{\phi}^{\left(  m\right)  }\right\Vert
\mathbf{\phi}^{\left(  k\right)  }\\
& =\mathbf{m}^{\left(  k\right)  }-\sum\nolimits_{m=1}^{k-1}\left\langle
\mathbf{m}^{\left(  k\right)  },\mathbf{\phi}^{\left(  m\right)
}\right\rangle \mathbf{\phi}^{\left(  m\right)  }\\
& \approx\mathbf{m}^{\left(  k\right)  }-\sum\nolimits_{m=1}^{k-1}\gamma
_{m}\mathbf{\phi}^{\left(  m\right)  }.
\end{align*}
From $q>\Gamma_{k-1}^{2},$ by (\ref{Main1}) and (\ref{Main2}) for $k\leq J,$
and the fact that the $\phi_{j}^{\left(  k\right)  }$ are uniformly bounded on
$A_{J},$ we have%
\[
\left\Vert \mathbf{m}^{\left(  k\right)  }-\sum\nolimits_{m=1}^{k-1}%
\left\langle \mathbf{m}^{\left(  k\right)  },\mathbf{\phi}^{\left(  m\right)
}\right\rangle \mathbf{\phi}^{\left(  m\right)  }\right\Vert \simeq
\sqrt{q-\Gamma_{k-1}^{2}},
\]
So the claim (\ref{m_to_X}) follows.

We define for $1\leq s<k$%
\[
\mathbf{m}^{\left(  k,s\right)  }\overset{\mathrm{def}}{=}\operatorname*{Th}%
\left(  \mathbf{g}^{\left(  s\right)  }\mathbf{M}^{\left(  k-1,s-1\right)
}+\sum\nolimits_{t=1}^{s-1}\gamma_{t}\mathbf{\xi}^{\left(  t\right)  }%
+\beta\left(  1-q\right)  \left\{  \mathbf{X}^{\left(  k-2,s-1\right)
}-\mathbf{m}^{\left(  k-2\right)  }\right\}  \right)  .
\]
Remark that by Lemma \ref{Le_Orth1}, we have $\mathbf{g}^{\left(  s\right)
}\mathbf{M}^{\left(  k-1,s-1\right)  }=\mathbf{g}^{\left(  s\right)
}\mathbf{m}^{\left(  k\right)  }.$ Evidently%
\[
\mathbf{m}^{\left(  k,1\right)  }=\mathbf{m}^{\left(  k\right)  },
\]
and we define%
\[
\mathbf{\hat{m}}^{\left(  1\right)  }\overset{\mathrm{def}}{=}\mathbf{m}%
^{\left(  1\right)  }=\sqrt{q}\mathbf{1,\ \hat{m}}^{\left(  k\right)
}\overset{\mathrm{def}}{=}\mathbf{m}^{\left(  k,k-1\right)  },\ k\geq2.
\]
By (\ref{m_to_X})%
\begin{align*}
\mathbf{\hat{m}}^{\left(  k\right)  }  & =\operatorname*{Th}\left(
\mathbf{g}^{\left(  k-1\right)  }\mathbf{m}^{\left(  k-1\right)  }%
+\sum\nolimits_{t=1}^{k-2}\gamma_{t}\mathbf{\xi}^{\left(  t\right)  }\right)
\\
& =\operatorname*{Th}\left(  \mathbf{g}^{\left(  k-1\right)  }\mathbf{M}%
^{\left(  k-1\right)  }+\sum\nolimits_{t=1}^{k-2}\gamma_{t}\mathbf{\xi
}^{\left(  t\right)  }\right) \\
& =\operatorname*{Th}\left(  \left\Vert \mathbf{M}^{\left(  k-1\right)
}\right\Vert \mathbf{\xi}^{\left(  k-1\right)  }+\sum\nolimits_{t=1}%
^{k-2}\gamma_{t}\mathbf{\xi}^{\left(  t\right)  }\right)  .
\end{align*}

The key result of our paper is

\begin{proposition}
\label{Prop_key}%
\begin{equation}
\mathbf{m}^{\left(  k\right)  }\approx\mathbf{\hat{m}}^{\left(  k\right)
}\label{key}%
\end{equation}
holds for all $k.$
\end{proposition}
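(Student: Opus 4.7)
The plan is induction on $s$: show $\mathbf{m}^{(k,s)} \approx \mathbf{m}^{(k,s+1)}$ for $s = 1, \ldots, k-2$. Since $\approx$ is preserved under finite sums and $k$ is fixed, chaining these $k-2$ approximations yields $\mathbf{m}^{(k)} = \mathbf{m}^{(k,1)} \approx \mathbf{m}^{(k,k-1)} = \hat{\mathbf{m}}^{(k)}$. For the inductive step, the Lipschitz bound $|\operatorname*{Th}(x) - \operatorname*{Th}(y)| \leq \beta |x - y|$ reduces matters to showing $A_{s+1} - A_s \approx 0$, where $A_s$ denotes the argument of $\operatorname*{Th}$ in the definition of $\mathbf{m}^{(k,s)}$.

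To compute this difference, I would expand $\mathbf{g}^{(s+1)} = \mathbf{g}^{(s)} - \pi_{\mathcal{L}_s}(\mathbf{g}^{(s)})$, use the Gram--Schmidt identity $\mathbf{M}^{(k-1,s-1)} - \mathbf{M}^{(k-1,s)} = \langle \mathbf{m}^{(k-1)}, \boldsymbol{\phi}^{(s)}\rangle \boldsymbol{\phi}^{(s)}$, and invoke $\operatorname{COND}(J)$ in the form $\langle \mathbf{m}^{(k-1)}, \boldsymbol{\phi}^{(s)}\rangle \simeq \gamma_s$ so that the $\gamma_s \boldsymbol{\xi}^{(s)}$ term appearing in $\mathbf{m}^{(k,s+1)}$ cancels against the extra piece of $\mathbf{g}^{(s)} \mathbf{M}^{(k-1,s-1)}$ produced by the Gram--Schmidt identity. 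What remains is
\[
A_{s+1} - A_s \approx -\pi_{\mathcal{L}_s}(\mathbf{g}^{(s)})\mathbf{M}^{(k-1,s)} + \beta(1-q)\,\langle \boldsymbol{\phi}^{(s)}, \mathbf{m}^{(k-2)}\rangle\, \boldsymbol{\phi}^{(s)},
\]
where the second summand covers both $s < k-2$ and $s = k-2$ uniformly via $\operatorname{COND}(J)$ (since $\langle \boldsymbol{\phi}^{(s)}, \mathbf{m}^{(k-2)}\rangle \simeq \gamma_s$ in the former case and $\simeq \|\mathbf{M}^{(k-2)}\| \simeq \sqrt{q - \Gamma_{k-3}^2}$ in the latter). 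Applying the explicit formula for $\pi_{\mathcal{L}_s}(\mathbf{g}^{(s)})$ supplied by Lemma \ref{Le_G2}, together with the orthogonality $\langle \boldsymbol{\phi}^{(s)}, \mathbf{M}^{(k-1,s)}\rangle = 0$, collapses the first summand to $\langle \boldsymbol{\xi}^{(s)}, \mathbf{M}^{(k-1,s)}\rangle \boldsymbol{\phi}^{(s)}$ modulo lower-order terms.

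Everything therefore reduces to the Onsager-type identity $\langle \boldsymbol{\xi}^{(s)}, \mathbf{M}^{(k-1,s)}\rangle \simeq \beta(1-q)\langle \boldsymbol{\phi}^{(s)}, \mathbf{m}^{(k-2)}\rangle$. The replacement of $\mathbf{M}^{(k-1,s)}$ by $\mathbf{m}^{(k-1)}$ is free: Lemma \ref{Li_Orth2} kills $\langle \boldsymbol{\xi}^{(s)}, \boldsymbol{\phi}^{(m)}\rangle$ for $m < s$, while $\langle \boldsymbol{\xi}^{(s)}, \boldsymbol{\phi}^{(s)}\rangle$ is a centered quadratic form in $\mathbf{g}^{(s)}$ with vanishing diagonal and hence $\simeq 0$ by standard concentration. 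Using the alternative representation (\ref{AltRep2}) at level $j = s$, combined with Lemma \ref{Le_Orth1}, gives $\mathbf{m}^{(k-1)} = \operatorname*{Th}(\mathbf{g}^{(s)}\mathbf{m}^{(k-2)} + \mathbf{R}^{(k-3)})$. A Gaussian integration by parts on $\mathbf{g}^{(s)}$, driven by the conditional covariances (\ref{GaussMain1})--(\ref{GaussMain3}), then produces
\[
\langle \boldsymbol{\xi}^{(s)}, \mathbf{m}^{(k-1)}\rangle \simeq \langle \boldsymbol{\phi}^{(s)}, \mathbf{m}^{(k-2)}\rangle \cdot \frac{1}{N}\sum_{i=1}^{N} \operatorname*{Th}\nolimits^{\prime}\bigl((\mathbf{g}^{(s)}\mathbf{m}^{(k-2)})_i + R_i^{(k-3)}\bigr),
\]
and the empirical average of $\operatorname*{Th}\nolimits^{\prime}$ concentrates around $E\operatorname*{Th}\nolimits^{\prime}(\sqrt{q}Z) = \beta(1-q)$ (Lemma \ref{Le_psi}\,b)) because the argument of $\operatorname*{Th}\nolimits^{\prime}$ has asymptotic variance $q$ by $\operatorname{COND}(J)$.

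The main technical obstacle is this Gaussian IBP: in (\ref{AltRep2}) the remainder $\mathbf{R}^{(k-3)}$ is only $\mathcal{F}_{k-3}$-measurable, so when $k > s + 2$ it still carries hidden dependence on $\mathbf{g}^{(s)}$ through the iterated projections defining the later $\boldsymbol{\phi}^{(t)}$. One must verify that these implicit dependencies contribute only $O_{J-1}(N^{-r})$ corrections -- this is precisely where the careful covariance bookkeeping of Section \ref{Sect_Gauss} does the real work. Once the IBP identity is established, the concentration of $\frac{1}{N}\sum_i \operatorname*{Th}\nolimits^{\prime}(\cdot)$ around its mean is routine, since the summands are bounded and Lipschitz in the $g_{ij}^{(s)}$.
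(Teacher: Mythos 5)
Your overall architecture matches the paper's: induction on $s$ via the chain $\mathbf{m}^{(J+1,s)}\approx\mathbf{m}^{(J+1,s+1)}$, using the Lipschitz property of $\operatorname*{Th}$, the explicit decomposition of $\mathbf{g}^{(s+1)}$ from Lemma \ref{Le_G2}, the orthogonality Lemmas \ref{Le_Orth1} and \ref{Li_Orth2}, and an Onsager-type evaluation of an inner product of $\boldsymbol{\xi}^{(s)}$ against the current iterate. Where you diverge, and where there is a genuine gap, is in how you propose to establish that Onsager identity $\langle\boldsymbol{\xi}^{(s)},\mathbf{m}^{(k-1)}\rangle\simeq\beta(1-q)\langle\boldsymbol{\phi}^{(s)},\mathbf{m}^{(k-2)}\rangle$.

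You try to compute $\langle\boldsymbol{\xi}^{(s)},\mathbf{m}^{(k-1)}\rangle$ by a direct Gaussian integration by parts in $\mathbf{g}^{(s)}$, feeding the representation $\mathbf{m}^{(k-1)}=\operatorname*{Th}\bigl(\mathbf{g}^{(s)}\mathbf{m}^{(k-2)}+\mathbf{R}^{(k-3)}\bigr)$. You flag the obstacle yourself: for $k>s+2$ both $\mathbf{m}^{(k-2)}$ and $\mathbf{R}^{(k-3)}$ are merely $\mathcal{F}_{k-3}$-measurable, and $\mathcal{F}_{k-3}$ contains $\boldsymbol{\xi}^{(s)}=\mathbf{g}^{(s)}\boldsymbol{\phi}^{(s)}$, so these quantities have genuine (not just lower-order) dependence on $\mathbf{g}^{(s)}$. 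The implicit derivative terms produced by the IBP are therefore not controlled by the covariance estimates of Section \ref{Sect_Gauss} alone, and you do not supply an argument that they are negligible; you only assert it is ``precisely where the careful covariance bookkeeping does the real work'', but Section \ref{Sect_Gauss} never performs an IBP of this kind. This is the crux, and it is left open.

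The paper circumvents exactly this difficulty by a structural trick you don't use: it proves, \emph{in the same induction}, the companion statement (\ref{Second_Ind}), namely $\langle\boldsymbol{\xi}^{(m)},\mathbf{m}^{(k)}\rangle\simeq\langle\boldsymbol{\xi}^{(m)},\hat{\mathbf{m}}^{(k)}\rangle$ for $m<k$. This is not automatic from $\mathbf{m}^{(k)}\approx\hat{\mathbf{m}}^{(k)}$ because the $\xi_i^{(m)}$ are unbounded, so it has to be carried along as an additional induction hypothesis, and the paper tracks it explicitly through the five intermediate vectors $\boldsymbol{\mu}^{(0)},\ldots,\boldsymbol{\mu}^{(5)}$. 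With (\ref{Second_Ind}) in hand, the Onsager coefficient is computed as $\langle\boldsymbol{\xi}^{(s)},\hat{\mathbf{m}}^{(J)}\rangle$ rather than $\langle\boldsymbol{\xi}^{(s)},\mathbf{m}^{(J)}\rangle$, and $\hat{\mathbf{m}}^{(J)}=\operatorname*{Th}\bigl(\|\mathbf{M}^{(J-1)}\|\boldsymbol{\xi}^{(J-1)}+\sum_{t<J-1}\gamma_t\boldsymbol{\xi}^{(t)}\bigr)$ is an \emph{explicit} function of the approximately-Gaussian vectors $\boldsymbol{\xi}^{(t)}$. Lemma \ref{Le_Main} a) then conditions successively on $\mathcal{F}_{J-2},\mathcal{F}_{J-3},\ldots$ and applies the LLN of Lemma \ref{Le_MainLLN}, where the only ``IBP'' needed is the elementary $EZf(Z)=Ef'(Z)$ applied to a genuinely independent standard Gaussian at each stage. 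No partial integration in $\mathbf{g}^{(s)}$ against an implicitly dependent functional is ever performed. To close your argument you would need to either (i) introduce and prove (\ref{Second_Ind}) and then compute via $\hat{\mathbf{m}}^{(J)}$ as the paper does, or (ii) rigorously bound the implicit-dependence terms in your IBP, which is a substantial additional piece of work that the present proposal does not contain.
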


This proposition is correct for all $\beta.$ The key point with (\ref{AT}) is
that the first summand $\left\Vert \mathbf{M}^{\left(  k-1\right)
}\right\Vert \mathbf{\xi}^{\left(  k-1\right)  }$ disappears for
$k\rightarrow\infty$ as $\left\Vert \mathbf{M}^{\left(  k-1\right)
}\right\Vert \simeq\sqrt{q-\Gamma_{k-2}^{2}},$ so that for large $k,$
$\mathbf{\hat{m}}^{\left(  k\right)  }$ stabilizes to $\operatorname*{Th}%
\left(  \sum\nolimits_{t=1}^{k-2}\gamma_{t}\mathbf{\xi}^{\left(  t\right)
}\right)  , $ but above the AT-line $q-\Gamma_{k-2}^{2}$ does not converge to
$0.$ Therefore, above the AT-line, in every iteration, new conditionally
independent contributions appear.

The above proposition is proved by showing that $\operatorname{COND}\left(
J\right)  $ implies%
\begin{equation}
\mathbf{m}^{\left(  J+1\right)  }\approx\mathbf{\hat{m}}^{\left(  J+1\right)
}.\label{First_Ind}%
\end{equation}
As $\operatorname{COND}\left(  J\right)  $ implies trivially
$\operatorname{COND}\left(  J^{\prime}\right)  $ for $J^{\prime}<J,$ it is
then clear that $\operatorname{COND}\left(  J\right)  $ implies $\mathbf{m}%
^{\left(  k\right)  }\approx\mathbf{\hat{m}}^{\left(  k\right)  }$ for all
$k\leq J+1.$ As the $m_{j}^{\left(  k\right)  } $ are uniformly bounded by
$1,$ we get from that%
\[
\left\langle \mathbf{m}^{\left(  J+1\right)  },\mathbf{m}^{\left(  j\right)
}\right\rangle \simeq\left\langle \mathbf{\hat{m}}^{\left(  J+1\right)
},\mathbf{\hat{m}}^{\left(  j\right)  }\right\rangle ,
\]
for all $j\leq J+1.$ We will then prove (Lemma \ref{Le_Main}) that%
\[
\left\langle \mathbf{\hat{m}}^{\left(  J+1\right)  },\mathbf{\hat{m}}^{\left(
j\right)  }\right\rangle \simeq\rho_{j}%
\]
for $j\leq J,$ and%
\[
\left\Vert \mathbf{\hat{m}}^{\left(  J+1\right)  }\right\Vert ^{2}\simeq q.
\]
This will prove $\operatorname{COND}\left(  J+1\right)  ,$ and therefore, this
will have finished the whole induction procedure.

Together with proving (\ref{First_Ind}), we also show%
\begin{equation}
\left\langle \mathbf{\xi}^{\left(  m\right)  },\mathbf{m}^{\left(  k\right)
}\right\rangle \simeq\left\langle \mathbf{\xi}^{\left(  m\right)
},\mathbf{\hat{m}}^{\left(  k\right)  }\right\rangle ,\ \forall
m<k\label{Second_Ind}%
\end{equation}
for $k=J+1$ which is not evident from (\ref{First_Ind}) as the $\xi
_{i}^{\left(  m\right)  }$ are not bounded.

\begin{lemma}
Assume the validity of (\ref{Main1})-(\ref{Main3}) and (\ref{Second_Ind}) for
$k\leq J.$ Then for $s=1,\ldots,J-1$%
\[
\mathbf{m}^{\left(  J+1,s\right)  }\approx\mathbf{m}^{\left(  J+1,s+1\right)
}.
\]
In particular, it follows%
\[
\mathbf{m}^{\left(  J+1\right)  }\approx\mathbf{\hat{m}}^{\left(  J+1\right)
}.
\]
Furthermore (\ref{Second_Ind}) holds for $k=J+1.$
\end{lemma}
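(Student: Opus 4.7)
My approach is to induct on $s$, comparing the arguments inside $\operatorname*{Th}$ in $\mathbf{m}^{(J+1,s)}$ and $\mathbf{m}^{(J+1,s+1)}$. The two arguments differ in three pieces: the change $\mathbf{g}^{(s)}\mathbf{M}^{(J,s-1)}\mapsto\mathbf{g}^{(s+1)}\mathbf{M}^{(J,s)}$ in the linear term, the added $\gamma_s\mathbf{\xi}^{(s)}$ in the $\xi$-sum, and the added $\beta(1-q)\gamma_s\mathbf{\phi}^{(s)}$ in the Onsager piece. Using $\mathbf{g}^{(s+1)} = \mathbf{g}^{(s)} - \pi_{\mathcal{L}_s}(\mathbf{g}^{(s)})$ together with $\mathbf{g}^{(s+1)}\mathbf{\phi}^{(s)} = 0$ from Lemma \ref{Le_Orth1} gives $\mathbf{g}^{(s+1)}\mathbf{M}^{(J,s-1)} = \mathbf{g}^{(s+1)}\mathbf{M}^{(J,s)}$ (since $\mathbf{M}^{(J,s)} - \mathbf{M}^{(J,s-1)}$ is a scalar multiple of $\mathbf{\phi}^{(s)}$), and so the total difference of arguments collapses to
\[
\pi_{\mathcal{L}_s}\!\bigl(\mathbf{g}^{(s)}\bigr)\mathbf{M}^{(J,s-1)} \;-\; \gamma_s\mathbf{\xi}^{(s)} \;-\; \beta(1-q)\gamma_s\mathbf{\phi}^{(s)}.
\]

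The main computation is to expand $\pi_{\mathcal{L}_s}(\mathbf{g}^{(s)})_{ij}$ via Lemma \ref{Le_G2}, whose leading part reads $N^{-1}(\phi^{(s)}_i\xi^{(s)}_j + \phi^{(s)}_j\xi^{(s)}_i) - N^{-2}\phi^{(s)}_i\phi^{(s)}_j\sum_r\phi^{(s)}_r\xi^{(s)}_r$ plus the small remainder $\sum_u x^{(s)}_{ij,u}\xi^{(s)}_u$. Applied to $\mathbf{M}^{(J,s-1)}$ this produces $\phi^{(s)}_i\langle\mathbf{\xi}^{(s)},\mathbf{M}^{(J,s-1)}\rangle + \xi^{(s)}_i\langle\mathbf{\phi}^{(s)},\mathbf{M}^{(J,s-1)}\rangle$ modulo $\simeq 0$. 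By orthogonality of $\mathbf{\phi}^{(s)}$ to $\mathbf{\phi}^{(1)},\ldots,\mathbf{\phi}^{(s-1)}$ together with (\ref{Main3}), $\langle\mathbf{\phi}^{(s)},\mathbf{M}^{(J,s-1)}\rangle = \langle\mathbf{\phi}^{(s)},\mathbf{m}^{(J)}\rangle \simeq \gamma_s$, producing the $\gamma_s\mathbf{\xi}^{(s)}$ that kills the second piece. Similarly Lemma \ref{Li_Orth2} gives $\langle\mathbf{\xi}^{(s)},\mathbf{M}^{(J,s-1)}\rangle = \langle\mathbf{\xi}^{(s)},\mathbf{m}^{(J)}\rangle$, which by the inductive hypothesis (\ref{Second_Ind}) equals $\langle\mathbf{\xi}^{(s)},\hat{\mathbf{m}}^{(J)}\rangle$ up to $\simeq 0$. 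A Gaussian partial integration applied to $\hat{\mathbf{m}}^{(J)} = \operatorname*{Th}(\sum_t\gamma_t\mathbf{\xi}^{(t)} + \|\mathbf{M}^{(J-1)}\|\mathbf{\xi}^{(J-1)})$, using the conditional covariances (\ref{GaussMain1})--(\ref{GaussMain3}), must produce the Onsager constant $\beta(1-q)\gamma_s$ and so cancel the third piece.

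With the difference of arguments made $L^2$-small, Lipschitz continuity of $\operatorname*{Th}$ (constant $\beta$) yields $\mathbf{m}^{(J+1,s)}\approx\mathbf{m}^{(J+1,s+1)}$. Chaining $s=1,\ldots,J-1$ and recalling $\mathbf{m}^{(J+1,1)} = \mathbf{m}^{(J+1)}$ and $\mathbf{m}^{(J+1,J-1+1)} = \hat{\mathbf{m}}^{(J+1)}$ then gives $\mathbf{m}^{(J+1)}\approx\hat{\mathbf{m}}^{(J+1)}$. For (\ref{Second_Ind}) at $k=J+1$, since $\mathbf{\xi}^{(m)}$ is unbounded I apply Cauchy--Schwarz: $|\langle\mathbf{\xi}^{(m)},\mathbf{m}^{(J+1,s)}-\mathbf{m}^{(J+1,s+1)}\rangle| \leq \|\mathbf{\xi}^{(m)}\|\cdot\|\mathbf{m}^{(J+1,s)}-\mathbf{m}^{(J+1,s+1)}\|$. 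Lemma \ref{Le_G1}a combined with concentration gives $\|\mathbf{\xi}^{(m)}\|^2\simeq 1$, so the $L^2$-smallness already obtained propagates through the chain and delivers $\langle\mathbf{\xi}^{(m)},\mathbf{m}^{(J+1)}\rangle\simeq\langle\mathbf{\xi}^{(m)},\hat{\mathbf{m}}^{(J+1)}\rangle$.

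The main obstacle is the Gaussian integration by parts that must produce \emph{exactly} the Onsager constant $\beta(1-q)\gamma_s$ from $\langle\mathbf{\xi}^{(s)},\hat{\mathbf{m}}^{(J)}\rangle$. One differentiates the argument of $\operatorname*{Th}$ against the conditional covariance of $\xi^{(s)}_i$ with the $\mathcal{F}_{s-1}$-measurable Gaussians $\mathbf{\xi}^{(t)}$, $t<s$, uses $\operatorname*{Th}\nolimits'(x) = \beta\cosh^{-2}(h+\beta x)$, and must then match the resulting tadpole against the recursion $\rho_k=\psi(\rho_{k-1})$, $\gamma_k=(\rho_k-\Gamma_{k-1}^2)/\sqrt{q-\Gamma_{k-1}^2}$ from Section 2, together with the normalization $\|\mathbf{m}^{(J)}\|^2\simeq q$ from (\ref{Main1}). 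Performing this matching — which is the algebraic heart of the recursion making $\gamma_k$ the correct coefficient — and keeping track that all sub-leading contributions of $\pi_{\mathcal{L}_s}(\mathbf{g}^{(s)})$ remain $\simeq 0$ after hitting $\mathbf{M}^{(J,s-1)}$, is where the delicate bookkeeping lies.
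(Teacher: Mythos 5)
Your outline follows essentially the same route as the paper: induction on $s$, expanding $\mathbf{g}^{\left(  s\right)  }-\mathbf{g}^{\left(  s+1\right)  }=\pi_{\mathcal{L}_{s}}\left(  \mathbf{g}^{\left(  s\right)  }\right)  $ via Lemma \ref{Le_G2}, replacing $\left\langle \mathbf{\phi}^{\left(  s\right)  },\mathbf{m}^{\left(  J\right)  }\right\rangle $ by $\gamma_{s}$ through (\ref{Main3}), $\left\langle \mathbf{\xi}^{\left(  s\right)  },\mathbf{m}^{\left(  J\right)  }\right\rangle $ by $\left\langle \mathbf{\xi}^{\left(  s\right)  },\mathbf{\hat{m}}^{\left(  J\right)  }\right\rangle $ through (\ref{Second_Ind}), and finally by a constant; the paper organizes exactly this as the chain $\mathbf{\mu}^{\left(  0\right)  },\ldots,\mathbf{\mu}^{\left(  5\right)  }$ with Corollary \ref{Le_LD}, and your Cauchy--Schwarz treatment of (\ref{Second_Ind}) at $k=J+1$ is a legitimate variant of the paper's use of Corollary \ref{Le_LD} b), c). However, the step you yourself label the ``main obstacle'', namely $\left\langle \mathbf{\xi}^{\left(  s\right)  },\mathbf{\hat{m}}^{\left(  J\right)  }\right\rangle \simeq\beta\left(  1-q\right)  \gamma_{s}$, is left as a described mechanism rather than proved; it is precisely Lemma \ref{Le_Main} a), and its proof is not a one-line integration by parts: one must condition successively on $\mathcal{F}_{J-2},\mathcal{F}_{J-3},\ldots$ and use the conditional law of large numbers (Lemma \ref{Le_MainLLN}, with the covariance structure of Lemma \ref{Le_G1}) to replace the correlated vectors $\mathbf{\xi}^{\left(  t\right)  }$ one at a time by independent standard Gaussians, after which $\Gamma_{J-2}^{2}+\left\Vert \mathbf{M}^{\left(  J-1\right)  }\right\Vert ^{2}\simeq q$ and the fixed-point identity $E\operatorname*{Th}\nolimits^{2}\left(  \sqrt{q}Z\right)  =q$ produce the factor $1-q$. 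Deferring this ``matching'' leaves the algebraic core of the induction unproved.

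There is also a concrete slip at the endpoint $s=J-1$. The increment of the Onsager piece there is $\beta\left(  1-q\right)  \left\{  \mathbf{X}^{\left(  J-1,J-1\right)  }-\mathbf{X}^{\left(  J-1,J-2\right)  }\right\}  =\beta\left(  1-q\right)  \sqrt{q-\Gamma_{J-2}^{2}}\,\mathbf{\phi}^{\left(  J-1\right)  }$, not $\beta\left(  1-q\right)  \gamma_{J-1}\mathbf{\phi}^{\left(  J-1\right)  }$, because $\mathbf{X}^{\left(  k,k\right)  }$ carries the coefficient $\sqrt{q-\Gamma_{k-1}^{2}}$ on its last vector; correspondingly $\left\langle \mathbf{\xi}^{\left(  J-1\right)  },\mathbf{\hat{m}}^{\left(  J\right)  }\right\rangle \simeq\beta\left(  1-q\right)  \sqrt{q-\Gamma_{J-2}^{2}}$ (second line of (\ref{Aprox3})), since $\mathbf{\xi}^{\left(  J-1\right)  }$ enters $\mathbf{\hat{m}}^{\left(  J\right)  }$ with coefficient $\left\Vert \mathbf{M}^{\left(  J-1\right)  }\right\Vert $ rather than $\gamma_{J-1}$. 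With the constants as you state them, the cancellation at this last link fails by $\beta\left(  1-q\right)  \left(  \sqrt{q-\Gamma_{J-2}^{2}}-\gamma_{J-1}\right)  =\beta\left(  1-q\right)  \left(  q-\rho_{J-1}\right)  /\sqrt{q-\Gamma_{J-2}^{2}}$, which is strictly positive by Lemma \ref{Le_ConvergenceConstants} a), so $\mathbf{m}^{\left(  J+1,J-1\right)  }\approx\mathbf{m}^{\left(  J+1,J\right)  }=\mathbf{\hat{m}}^{\left(  J+1\right)  }$ would not follow as written; your own integration-by-parts framework would reveal the correct coefficient, but the case distinction must be made explicitly.
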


\begin{proof}
We prove by induction on $s,\ 1\leq s\leq J-1,$ that%
\begin{equation}
\mathbf{m}^{\left(  J+1,s\right)  }\approx\mathbf{m}^{\left(  J+1,s+1\right)
},\label{equ1}%
\end{equation}
and%
\begin{equation}
\left\langle \mathbf{\xi}^{\left(  m\right)  },\mathbf{m}^{\left(
J+1,s\right)  }\right\rangle \simeq\left\langle \mathbf{\xi}^{\left(
m\right)  },\mathbf{m}^{\left(  J+1,s+1\right)  }\right\rangle ,\ m\leq
J.\label{equ2}%
\end{equation}

We have%
\[
\mathbf{g}^{\left(  s+1\right)  }=\mathbf{g}^{\left(  s\right)  }-\mathbf{\xi
}^{\left(  s\right)  }\otimes_{\mathrm{s}}\mathbf{\phi}^{\left(  s\right)
}+\left\langle \mathbf{\xi}^{\left(  s\right)  },\mathbf{\phi}^{\left(
s\right)  }\right\rangle \left(  \mathbf{\phi}^{\left(  s\right)  }%
\otimes\mathbf{\phi}^{\left(  s\right)  }\right)  +\mathbf{c}^{\left(
s\right)  }%
\]
where%
\[
c_{ij}^{\left(  s\right)  }=\sum_{r}x_{ij,r}^{\left(  s\right)  }\xi
_{r}^{\left(  s\right)  },
\]
see Lemma \ref{Le_G2}. Therefore%
\[
\mathbf{m}^{\left(  J+1,s\right)  }=\operatorname*{Th}\left(  \mathbf{g}%
^{\left(  s+1\right)  }\mathbf{m}^{\left(  J\right)  }+\mathbf{y+}\beta\left(
1-q\right)  \left\{  \mathbf{X}^{\left(  J-1,s-1\right)  }-\mathbf{m}^{\left(
J-1\right)  }\right\}  \right)
\]
where%
\[
\mathbf{y}\overset{\mathrm{def}}{=}\left\langle \mathbf{\phi}^{\left(
s\right)  },\mathbf{m}^{\left(  J\right)  }\right\rangle \mathbf{\xi}^{\left(
s\right)  }+\left\langle \mathbf{\xi}^{\left(  s\right)  },\mathbf{m}^{\left(
J\right)  }\right\rangle \mathbf{\phi}^{\left(  s\right)  }+\left\langle
\mathbf{\phi}^{\left(  s\right)  },\mathbf{m}^{\left(  J\right)
}\right\rangle \left\langle \mathbf{\phi}^{\left(  s\right)  },\mathbf{\xi
}^{\left(  J\right)  }\right\rangle \mathbf{\phi}^{\left(  s\right)
}+\mathbf{c}^{\left(  s\right)  }\mathbf{m}^{\left(  J\right)  }.
\]
We write%
\[
\mathbf{y}^{\left(  1\right)  }\overset{\mathrm{def}}{=}\left\langle
\mathbf{\phi}^{\left(  s\right)  },\mathbf{m}^{\left(  J\right)
}\right\rangle \mathbf{\xi}^{\left(  s\right)  }+\left\langle \mathbf{\xi
}^{\left(  s\right)  },\mathbf{m}^{\left(  J\right)  }\right\rangle
\mathbf{\phi}^{\left(  s\right)  }+\left\langle \mathbf{\phi}^{\left(
s\right)  },\mathbf{m}^{\left(  J\right)  }\right\rangle \left\langle
\mathbf{\phi}^{\left(  s\right)  },\mathbf{\xi}^{\left(  J\right)
}\right\rangle \mathbf{\phi}^{\left(  s\right)  },
\]%
\[
\mathbf{y}^{\left(  2\right)  }\overset{\mathrm{def}}{=}\left\langle
\mathbf{\phi}^{\left(  s\right)  },\mathbf{m}^{\left(  J\right)
}\right\rangle \mathbf{\xi}^{\left(  s\right)  }+\left\langle \mathbf{\xi
}^{\left(  s\right)  },\mathbf{m}^{\left(  J\right)  }\right\rangle
\mathbf{\phi}^{\left(  s\right)  },
\]%
\[
\mathbf{y}^{\left(  3\right)  }\overset{\mathrm{def}}{=}\gamma_{s}\mathbf{\xi
}^{\left(  s\right)  }+\left\langle \mathbf{\xi}^{\left(  s\right)
},\mathbf{m}^{\left(  J\right)  }\right\rangle \mathbf{\phi}^{\left(
s\right)  },
\]%
\[
\mathbf{y}^{\left(  4\right)  }\overset{\mathrm{def}}{=}\gamma_{s}\mathbf{\xi
}^{\left(  s\right)  }+\left\langle \mathbf{\xi}^{\left(  s\right)
},\mathbf{\hat{m}}^{\left(  J\right)  }\right\rangle \mathbf{\phi}^{\left(
s\right)  },
\]%
\[
\mathbf{y}^{\left(  5\right)  }\overset{\mathrm{def}}{=}\gamma_{s}\mathbf{\xi
}^{\left(  s\right)  }+\beta\left(  1-q\right)  \gamma_{s}\mathbf{\phi
}^{\left(  s\right)  },
\]
and then set ad hoc%
\[
\mathbf{\mu}^{\left(  0\right)  }\overset{\mathrm{def}}{=}\mathbf{m}^{\left(
J+1,s\right)  },
\]
and define $\mathbf{\mu}^{\left(  n\right)  }$ where $\mathbf{y}$ is replaced
by $\mathbf{y}^{\left(  n\right)  },$ $n=1,\ldots,5.$ Remark that%
\[
\mathbf{\mu}^{\left(  5\right)  }=\mathbf{m}^{\left(  J+1,s+1\right)  }.
\]
We will prove%
\begin{equation}
\mathbf{\mu}^{\left(  n-1\right)  }\approx\mathbf{\mu}^{\left(  n\right)
},\ n=1,\ldots,5,\label{equiv1}%
\end{equation}
and%
\begin{equation}
\left\langle \mathbf{\xi}^{\left(  m\right)  },\mathbf{\mu}^{\left(
n-1\right)  }\right\rangle \simeq\left\langle \mathbf{\xi}^{\left(  m\right)
},\mathbf{\mu}^{\left(  n\right)  }\right\rangle ,\ n=1,\ldots
,5.\label{equiv2}%
\end{equation}
which prove the desired induction in $s.$

To switch from $\mathbf{\mu}^{\left(  0\right)  }$ to $\mathbf{\mu}^{\left(
1\right)  },$ we observe that by the estimates of Lemma \ref{Le_G2}, one has%
\[
\left\vert \left(  \mathbf{c}^{\left(  s\right)  }\mathbf{m}^{\left(
J\right)  }\right)  _{i}\right\vert \leq O_{s-1}\left(  1\right)  \left[
\frac{1}{N}\left\vert \xi_{i}^{\left(  s\right)  }\right\vert +\frac{1}{N^{2}%
}\sum\nolimits_{j}\left\vert \xi_{j}^{\left(  s\right)  }\right\vert \right]
.
\]
Therefore%
\[
\frac{1}{N}\sum_{i}\left\vert \mu_{i}^{\left(  0\right)  }-\mu_{i}^{\left(
1\right)  }\right\vert \leq\frac{O_{s-1}\left(  1\right)  }{N^{2}}%
\sum\nolimits_{j}\left\vert \xi_{j}^{\left(  s\right)  }\right\vert ,
\]
and%
\[
\frac{1}{N}\sum_{i}\left\vert \xi_{i}^{\left(  m\right)  }\left(  \mu
_{i}^{\left(  0\right)  }-\mu_{i}^{\left(  1\right)  }\right)  \right\vert
\leq\frac{O_{s-1}\left(  1\right)  }{N}\left\{  \frac{1}{N}\sum_{i}\left\vert
\xi_{i}^{\left(  m\right)  }\xi_{i}^{\left(  s\right)  }\right\vert +\frac
{1}{N}\sum_{i}\left\vert \xi_{i}^{\left(  m\right)  }\right\vert \frac{1}%
{N}\sum_{i}\left\vert \xi_{i}^{\left(  s\right)  }\right\vert \right\}  .
\]
By choosing $K$ large enough, we get for $1/\sqrt{N}\leq t\leq1$ by Corollary
\ref{Le_LD} a)%
\begin{align*}
\mathbb{P}\left(  \frac{1}{N}\sum_{i}\left\vert \mu_{i}^{\left(  0\right)
}-\mu_{i}^{\left(  1\right)  }\right\vert \geq t\right)   & \leq
\mathbb{P}\left(  \frac{K}{N}\sum\nolimits_{j}\left\vert \xi_{j}^{\left(
s\right)  }\right\vert \geq tN\right)  +\mathbb{P}\left(  O_{s-1}\left(
1\right)  \geq K\right) \\
& \leq C\exp\left[  -N/C\right]  \leq C\exp\left[  -Nt^{2}/C\right]  .
\end{align*}
For $t\leq1/\sqrt{N},$ the bound is trivial anyway. This proves (\ref{equiv1})
for $n=1.$ (\ref{equiv2}) follows in the same way using Corollary \ref{Le_LD}
b).%
\begin{align*}
\frac{1}{N}\sum_{i}\left\vert \mu_{i}^{\left(  1\right)  }-\mu_{i}^{\left(
2\right)  }\right\vert  & \leq C\left\vert \left\langle \mathbf{\phi}^{\left(
s\right)  },\mathbf{m}^{\left(  J\right)  }\right\rangle \left\langle
\mathbf{\phi}^{\left(  s\right)  },\mathbf{\xi}^{\left(  J\right)
}\right\rangle \left\langle \mathbf{\phi}^{\left(  s\right)  },\mathbf{1}%
\right\rangle \right\vert \\
& \leq C\left\vert \left\langle \mathbf{\phi}^{\left(  s\right)  }%
,\mathbf{\xi}^{\left(  J\right)  }\right\rangle \right\vert
\end{align*}
on $A_{J}.$ (\ref{equiv1}) for $n=2$ then follows from Corollary \ref{Le_LD}
c). As for (\ref{equiv2}), we remark that%
\[
\frac{1}{N}\sum_{i}\left\vert \xi_{i}^{\left(  m\right)  }\left(  \mu
_{i}^{\left(  1\right)  }-\mu_{i}^{\left(  2\right)  }\right)  \right\vert
\leq C\left\vert \left\langle \mathbf{\phi}^{\left(  s\right)  },\mathbf{\xi
}^{\left(  J\right)  }\right\rangle \right\vert \left\vert \left\langle
\mathbf{\phi}^{\left(  s\right)  },\mathbf{\xi}^{\left(  m\right)
}\right\rangle \right\vert .
\]
We can then again use Corollary \ref{Le_LD} c) remarking that $\exp\left[
-Nt/C\right]  \leq\exp\left[  -Nt^{2}/C\right]  $ for $t\leq1.$%
\[
\frac{1}{N}\sum_{i}\left\vert \mu_{i}^{\left(  2\right)  }-\mu_{i}^{\left(
3\right)  }\right\vert \leq C\left\vert \left\langle \mathbf{\phi}^{\left(
s\right)  },\mathbf{m}^{\left(  J\right)  }\right\rangle -\gamma
_{s}\right\vert \frac{1}{N}\sum_{i}\left\vert \xi_{i}^{\left(  s\right)
}\right\vert .
\]
(\ref{equiv1}) for $n=3$ follows from the induction hypothesis (\ref{Main3}),
and Corollary \ref{Le_LD} a). Similarly with (\ref{equiv2}) but here, one has
to use part b) of Corollary \ref{Le_LD}.%
\[
\frac{1}{N}\sum_{i}\left\vert \mu_{i}^{\left(  3\right)  }-\mu_{i}^{\left(
4\right)  }\right\vert \leq C\left\vert \left\langle \mathbf{\xi}^{\left(
s\right)  },\mathbf{m}^{\left(  J\right)  }-\mathbf{\hat{m}}^{\left(
J\right)  }\right\rangle \right\vert
\]
on $A_{k},$ and one uses the induction hypothesis (\ref{Second_Ind}) for $J$
to get (\ref{equiv1}) for $n=4.$ Remark that actually, one has a bound uniform
in $i:$%
\[
\left\vert \mu_{i}^{\left(  3\right)  }-\mu_{i}^{\left(  4\right)
}\right\vert \leq C\left\vert \left\langle \mathbf{\xi}^{\left(  s\right)
},\mathbf{m}^{\left(  J\right)  }-\mathbf{\hat{m}}^{\left(  J\right)
}\right\rangle \right\vert .
\]
Therefore, one also gets (\ref{equiv2}) using Corollary \ref{Le_LD}. Up to
now, we have obtained%
\begin{align*}
\mathbf{m}^{\left(  J+1,s\right)  }  & \approx\operatorname*{Th}%
\Big(\mathbf{g}^{\left(  s\right)  }\mathbf{M}^{\left(  k-1,s-1\right)  }%
+\sum\nolimits_{t=1}^{s}\gamma_{t}\mathbf{\xi}^{\left(  t\right)  }\\
& +\left\langle \mathbf{\xi}^{\left(  s\right)  },\mathbf{\hat{m}}^{\left(
J\right)  }\right\rangle \mathbf{\phi}^{\left(  s\right)  }+\beta\left(
1-q\right)  \left\{  \mathbf{X}^{\left(  J-1,s-1\right)  }-\mathbf{m}^{\left(
J-1\right)  }\right\}  \Big)
\end{align*}
and%
\begin{align*}
\left\langle \mathbf{\xi}^{\left(  m\right)  },\mathbf{m}^{\left(
J+1,s\right)  }\right\rangle  & \simeq\left\langle \mathbf{\xi}^{\left(
m\right)  },\operatorname*{Th}\Big(\mathbf{g}^{\left(  s\right)  }%
\mathbf{M}^{\left(  k-1,s-1\right)  }+\sum\nolimits_{t=1}^{s}\gamma
_{t}\mathbf{\xi}^{\left(  t\right)  }\right. \\
& \left.  +\left\langle \mathbf{\xi}^{\left(  s\right)  },\mathbf{\hat{m}%
}^{\left(  J\right)  }\right\rangle \mathbf{\phi}^{\left(  s\right)  }%
+\beta\left(  1-q\right)  \left\{  \mathbf{X}^{\left(  J-1,s-1\right)
}-\mathbf{m}^{\left(  J-1\right)  }\right\}  \Big)\right\rangle
\end{align*}
By Lemma \ref{Le_Main} a) below, we have%
\begin{equation}
\left\langle \mathbf{\xi}^{\left(  s\right)  },\mathbf{\hat{m}}^{\left(
J\right)  }\right\rangle \simeq\left\{
\begin{array}
[c]{cc}%
\beta\left(  1-q\right)  \gamma_{s} & \mathrm{for\ }s<J-1\\
\beta\left(  1-q\right)  \sqrt{q-\Gamma_{J-2}^{2}} & \mathrm{for\ }s=J-1
\end{array}
\right.  ,\label{Aprox3}%
\end{equation}
and we can therefore replace $\left\langle \mathbf{\xi}^{\left(  s\right)
},\mathbf{\hat{m}}^{\left(  J\right)  }\right\rangle \mathbf{\phi}^{\left(
s\right)  }$ on the right hand side, by $\beta\left(  1-q\right)  \gamma
_{s}\mathbf{\phi}^{\left(  s\right)  }$ for $s<J-1,$ or $\beta\left(
1-q\right)  \sqrt{q-\Gamma_{J-2}^{2}}\mathbf{\phi}^{\left(  J-1\right)  }$ for
$s=J-1,$ which is the same as replacing $\mathbf{X}^{\left(  J-1,s-1\right)
}$ by $\mathbf{X}^{\left(  J-1,s\right)  }.$ Therefore, the lemma is proved.
\end{proof}

\begin{lemma}
\label{Le_Main} We assume $\operatorname{COND}\left(  J\right)  $.

\begin{enumerate}
\item[a)]
\[
\left\langle \mathbf{\xi}^{\left(  s\right)  },\mathbf{\hat{m}}^{\left(
J\right)  }\right\rangle \simeq\left\{
\begin{array}
[c]{cc}%
\beta\left(  1-q\right)  \gamma_{s} & \mathrm{for\ }s<J-1\\
\beta\left(  1-q\right)  \sqrt{q-\Gamma_{J-2}^{2}} & \mathrm{for\ }s=J-1
\end{array}
\right.  .
\]

\item[b)]
\[
\left\langle \mathbf{\hat{m}}^{\left(  J+1\right)  },\mathbf{\hat{m}}^{\left(
j\right)  }\right\rangle \simeq\rho_{j}%
\]
for $j\leq J,$ and%
\[
\left\langle \mathbf{\hat{m}}^{\left(  J+1\right)  },\mathbf{\hat{m}}^{\left(
J+1\right)  }\right\rangle \simeq q.
\]

\end{enumerate}
\end{lemma}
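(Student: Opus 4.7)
The plan is to exploit the approximate joint Gaussianity of $(\xi^{(1)}, \ldots, \xi^{(J)})$ established by Lemma \ref{Le_G1} and Proposition \ref{Th_Gauss_Main}, together with Gaussian integration by parts. Write $\mathbf{\hat{m}}^{(J)} = \operatorname*{Th}(\mathbf{X}^{(J)})$ with
\[
X^{(J)}_i = \sum_{t=1}^{J-2} \gamma_t \xi^{(t)}_i + \|\mathbf{M}^{(J-1)}\| \xi^{(J-1)}_i,
\]
and let $c^{(J)}_s$ denote the coefficient of $\xi^{(s)}$ in $\mathbf{X}^{(J)}$, so $c^{(J)}_s = \gamma_s$ for $s \leq J-2$ and $c^{(J)}_{J-1} = \|\mathbf{M}^{(J-1)}\| \simeq \sqrt{q - \Gamma^2_{J-2}}$. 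Because $\sum_s (c^{(J)}_s)^2 \simeq \Gamma^2_{J-2} + (q - \Gamma^2_{J-2}) = q$, the empirical distribution of the coordinates $X^{(J)}_i$ should be close to $\mathcal{N}(0,q)$, so that $N^{-1}\sum_i F(X^{(J)}_i) \simeq E[F(\sqrt{q}Z)]$ for bounded smooth $F$.

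For part a), the conditional Gaussianity of $\mathbf{g}^{(s)}$ given $\mathcal{F}_{s-2}$ lets us apply Gaussian integration by parts in $\xi^{(s)}$. Since $\partial X^{(J)}_i / \partial \xi^{(s)}_j = c^{(J)}_s \delta_{ij}$ up to the indirect dependence of $\xi^{(t)}$, $t>s$, on $\xi^{(s)}$, and since $\mathbb{E}_{s-1}\xi^{(s)}_i\xi^{(s)}_j \simeq \delta_{ij}$ after the $\phi$-orthogonality cancellations in (\ref{Corr_xi}), IBP yields
\[
\frac{1}{N}\sum_i \xi^{(s)}_i \operatorname*{Th}(X^{(J)}_i) \simeq c^{(J)}_s \cdot \frac{1}{N}\sum_i \operatorname*{Th}\nolimits'(X^{(J)}_i) \simeq c^{(J)}_s \cdot E\operatorname*{Th}\nolimits'(\sqrt{q}Z).
\]
Using $\operatorname*{Th}\nolimits'(x) = \beta/\cosh^2(h+\beta x)$ and the fixed-point identity $E\cosh^{-2}(h+\beta\sqrt{q}Z) = 1-q$, the right-hand side equals $c^{(J)}_s \beta(1-q)$. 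Inserting the two possible values of $c^{(J)}_s$ produces the two cases in a).

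For part b), the same picture is applied to the pair $(\mathbf{X}^{(J+1)}, \mathbf{X}^{(j)})$. The empirical joint coordinate distribution is approximately bivariate centered Gaussian with both marginal variances $\simeq q$ and covariance
\[
\sum_{t=1}^{j-2} \gamma_t^2 + \gamma_{j-1}\|\mathbf{M}^{(j-1)}\| \simeq \Gamma^2_{j-2} + \sqrt{q-\Gamma^2_{j-2}}\,\gamma_{j-1} = \rho_{j-1},
\]
by the defining recursion $\gamma_{j-1}\sqrt{q-\Gamma^2_{j-2}} = \rho_{j-1} - \Gamma^2_{j-2}$. Concentration of $N^{-1}\sum_i \operatorname*{Th}(X^{(J+1)}_i)\operatorname*{Th}(X^{(j)}_i)$ then identifies the limit with the corresponding Gaussian expectation, which is $\psi(\rho_{j-1}) = \rho_j$ by the very definition of $\psi$. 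The diagonal bound $\|\mathbf{\hat{m}}^{(J+1)}\|^2 \simeq q$ follows identically, using $E[\operatorname*{Th}(\sqrt{q}Z)^2] = q$ from the fixed-point equation (\ref{fixed_point}). The boundary case $j=1$ is handled directly: $\mathbf{\hat{m}}^{(1)} = \sqrt{q}\mathbf{1}$, and $\langle \mathbf{\hat{m}}^{(J+1)}, \sqrt{q}\mathbf{1}\rangle \simeq \sqrt{q}\,E\operatorname*{Th}(\sqrt{q}Z) = \sqrt{q}\,\alpha = \rho_1$.

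The main technical obstacle is upgrading these heuristic Gaussian computations to the exponential-concentration statement $\simeq$. The conditional covariances from Lemma \ref{Le_G1} carry $O_{J-1}(N^{-1})$ and $O_{J-1}(N^{-2})$ correction terms, and controlling the contribution of these corrections once summed against smooth functions such as $\operatorname*{Th}$ and $\operatorname*{Th}\nolimits'$ requires the concentration tools of Corollary \ref{Le_LD}, in the same way these were used in the preceding lemma to estimate the $\mathbf{c}^{(s)}\mathbf{m}^{(J)}$-type remainders. The orthogonality $\langle \phi^{(t)}, \phi^{(t')}\rangle = \delta_{tt'}$ has to be invoked repeatedly to kill the leading correction terms after contraction with the $\phi$'s. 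A secondary subtlety is the indirect dependence of $\xi^{(t)}$ on $\xi^{(s)}$ for $t>s$ in the integration-by-parts step; one must show that its contribution enters only through the small covariance corrections and is absorbed into the same $O_{J-1}$-terms.
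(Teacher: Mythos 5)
Your proposal is correct and follows essentially the same route as the paper: Gaussian integration by parts giving the factor $\beta(1-q)$ times the coefficient of $\xi^{(s)}$ for part a), and identification of the limiting bivariate Gaussian expectation (marginal variances $q$, covariance $\Gamma_{j-2}^{2}+\gamma_{j-1}\sqrt{q-\Gamma_{j-2}^{2}}=\rho_{j-1}$) with $\psi(\rho_{j-1})=\rho_{j}$ for part b). The concentration upgrade you flag is exactly what the paper supplies, by conditioning on $\mathcal{F}_{J-2},\mathcal{F}_{J-3},\ldots$ and applying Lemma \ref{Le_MainLLN} to replace one $\xi^{(t)}$ at a time by an independent Gaussian, which also disposes of the dependence issue you note in the integration-by-parts step.
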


\begin{proof}
a) Consider first the case $s=J-1.$%
\[
\mathbf{\hat{m}}^{\left(  J\right)  }=\operatorname*{Th}\left(  \left\Vert
\mathbf{M}^{\left(  J-1\right)  }\right\Vert \mathbf{\xi}^{\left(  J-1\right)
}+\sum\nolimits_{t=1}^{J-2}\gamma_{t}\mathbf{\xi}^{\left(  t\right)  }\right)
.
\]%
\[
\frac{1}{N}\sum_{i=1}^{N}\xi_{i}^{\left(  J-1\right)  }\hat{m}_{i}^{\left(
J\right)  }=\frac{1}{N}\sum_{i=1}^{N}\xi_{i}^{\left(  J-1\right)
}\operatorname*{Th}\left(  \left\Vert \mathbf{M}^{\left(  J-1\right)
}\right\Vert \xi_{i}^{\left(  J-1\right)  }+\sum\nolimits_{t=1}^{J-2}%
\gamma_{t}\xi_{i}^{\left(  t\right)  }\right)  .
\]
We condition on $\mathcal{F}_{J-2}.$ Then $\mathbf{\xi}^{\left(  J-1\right)  }
$ is conditionally Gaussian with covariances given in Lemma \ref{Le_G1} a),
b). We can therefore apply Lemma \ref{Le_MainLLN} which gives, conditionally
on $\mathcal{F}_{J-2},$ on an event $B_{J-2}\in\mathcal{F}_{J-2}$ which has
probability $\geq1-C\exp\left[  -N/C\right]  ,$%
\begin{align*}
\frac{1}{N}\sum_{i=1}^{N}\xi_{i}^{\left(  J-1\right)  }\hat{m}_{i}^{\left(
J\right)  }  & \simeq\frac{1}{N}\sum_{i=1}^{N}EZ_{J-1}\operatorname*{Th}%
\left(  \left\Vert \mathbf{M}^{\left(  J-1\right)  }\right\Vert Z_{J-1}%
+\sum\nolimits_{t=1}^{J-2}\gamma_{t}\xi_{i}^{\left(  t\right)  }\right) \\
& =\frac{1}{N}\sum_{i=1}^{N}\beta\left\Vert \mathbf{M}^{\left(  J-1\right)
}\right\Vert \left[  1-E\operatorname*{Th}\nolimits^{2}\left(  \left\Vert
\mathbf{M}^{\left(  J-1\right)  }\right\Vert Z_{J-1}+\sum\nolimits_{t=1}%
^{J-2}\gamma_{t}\xi_{i}^{\left(  t\right)  }\right)  \right] \\
& \simeq\beta\sqrt{q-\Gamma_{J-1}^{2}}\frac{1}{N}\sum_{i=1}^{N}\left[
1-E\operatorname*{Th}\nolimits^{2}\left(  \sqrt{q-\Gamma_{J-1}^{2}}%
Z_{J-1}+\sum\nolimits_{t=1}^{J-2}\gamma_{t}\xi_{i}^{\left(  t\right)
}\right)  \right]  .
\end{align*}

Applying now Lemma \ref{Le_MainLLN} successively to $\mathbf{\xi}^{\left(
J-2\right)  },\mathbf{\xi}^{\left(  J-2\right)  },\ldots$ , we get%
\begin{align*}
\frac{1}{N}\sum_{i=1}^{N}\xi_{i}^{\left(  J-1\right)  }\hat{m}_{i}^{\left(
J\right)  }  & \simeq\beta\sqrt{q-\Gamma_{J-1}^{2}}\left[
1-E\operatorname*{Th}\nolimits^{2}\left(  \sqrt{q-\Gamma_{J-1}^{2}}%
Z_{J-1}+\sum\nolimits_{t=1}^{J-2}\gamma_{t}Z_{t}\right)  \right] \\
& =\beta\sqrt{q-\Gamma_{J-1}^{2}}\left(  1-q\right)  .
\end{align*}

The case $s<J-1$ uses a minor modification of the argument. One first uses
Lemma \ref{Le_MainLLN} successively to get%
\begin{align*}
\frac{1}{N}\sum_{i=1}^{N}\xi_{i}^{\left(  s\right)  }\hat{m}_{i}^{\left(
J\right)  }  & \simeq\frac{1}{N}\sum_{i=1}^{N}\xi_{i}^{\left(  s\right)
}E\operatorname*{Th}\left(  \left\Vert \mathbf{M}^{\left(  J-1\right)
}\right\Vert Z_{J-1}+\sum\nolimits_{t=s+1}^{J-2}\gamma_{t}Z_{t}+\gamma_{s}%
\xi_{i}^{\left(  s\right)  }+\sum\nolimits_{t=1}^{s-1}\gamma_{t}\xi
_{i}^{\left(  t\right)  }\right) \\
& \simeq EZ_{s}\operatorname*{Th}\left(  \left\Vert \mathbf{M}^{\left(
J-1\right)  }\right\Vert Z_{J-1}+\sum\nolimits_{t=1}^{J-2}\gamma_{t}%
Z_{t}\right) \\
& =\beta\gamma_{s}\left[  1-E\operatorname*{Th}\nolimits^{2}\left(  \left\Vert
\mathbf{M}^{\left(  J-1\right)  }\right\Vert Z_{J-1}+\sum\nolimits_{t=1}%
^{J-2}\gamma_{t}Z_{t}\right)  \right]  =\beta\gamma_{s}\left(  1-q\right)  .
\end{align*}

b) This also comes with a modification of the reasoning in a).

Assume first $j\leq J.$%
\begin{align*}
\frac{1}{N}\sum_{i=1}^{N}\hat{m}_{i}^{\left(  J+1\right)  }\hat{m}%
_{i}^{\left(  j\right)  }  & \simeq\frac{1}{N}\sum_{i=1}^{N}%
\Big[E\operatorname*{Th}\left(  \left\Vert \mathbf{M}^{\left(  J\right)
}\right\Vert Z_{J}+\sum\nolimits_{t=1}^{J-1}\gamma_{t}\xi_{i}^{\left(
t\right)  }\right) \\
& \times\operatorname*{Th}\left(  \left\Vert \mathbf{M}^{\left(  j-1\right)
}\right\Vert \xi_{i}^{\left(  j-1\right)  }+\sum\nolimits_{t=1}^{j-2}%
\gamma_{t}\xi_{i}^{\left(  t\right)  }\right)  \Big].
\end{align*}
In the case $j=J+1,$ the outcome is similar, one only has to replace the
second factor by $\operatorname*{Th}\left(  \left\Vert \mathbf{M}^{\left(
J\right)  }\right\Vert Z_{J}+\sum\nolimits_{t=1}^{J-1}\gamma_{t}\mathbf{\xi
}_{i}^{\left(  t\right)  }\right)  .$

The next observation is that by the induction hypothesis, one can replace
$\left\Vert \mathbf{M}^{\left(  J\right)  }\right\Vert $ by $\sqrt
{q-\Gamma_{J-1}^{2}}$ and we get%
\begin{align*}
\frac{1}{N}\sum_{i=1}^{N}\hat{m}_{i}^{\left(  J+1\right)  }\hat{m}%
_{i}^{\left(  j\right)  }  & \simeq\frac{1}{N}\sum_{i=1}^{N}%
\Big[E\operatorname*{Th}\left(  \sqrt{q-\Gamma_{J-1}^{2}}Z_{J}+\sum
\nolimits_{t=1}^{J-1}\gamma_{t}\xi_{i}^{\left(  t\right)  }\right) \\
& \times\operatorname*{Th}\left(  \left\Vert \mathbf{M}^{\left(  j-1\right)
}\right\Vert \xi_{i}^{\left(  j-1\right)  }+\sum\nolimits_{t=1}^{j-2}%
\gamma_{t}\mathbf{\xi}_{i}^{\left(  t\right)  }\right)  \Big]
\end{align*}
in the $j\leq J$ case, and%
\[
\frac{1}{N}\sum_{i=1}^{N}\hat{m}_{i}^{\left(  J+1\right)  2}\simeq\frac{1}%
{N}\sum_{i=1}^{N}E\operatorname*{Th}\nolimits^{2}\left(  \sqrt{q-\Gamma
_{J-1}^{2}}Z_{J}+\sum\nolimits_{t=1}^{J-1}\gamma_{t}\xi_{i}^{\left(  t\right)
}\right)  .
\]

The important point is that the factor before $Z_{J}$ is replaced by a
constant, which is due to the induction hypothesis. We can now proceed in the
same way with $\mathbf{\xi}^{\left(  J-1\right)  },$ applying again Lemma
\ref{Le_MainLLN}, conditioned on $\mathcal{F}_{J-2},$ and the induction
hypothesis. The final outcome is%
\begin{align*}
\frac{1}{N}\sum_{i=1}^{N}\hat{m}_{i}^{\left(  J+1\right)  }\hat{m}%
_{i}^{\left(  j\right)  }  & \simeq E\Big[\operatorname*{Th}\left(
\sqrt{q-\Gamma_{J-1}^{2}}Z_{J}+\sum\nolimits_{r=j}^{J-1}\gamma_{r}Z_{r}%
+\sum\nolimits_{r=1}^{j-1}\gamma_{r}Z_{r}\right) \\
& \times\operatorname*{Th}\left(  \sqrt{q-\Gamma_{j-1}^{2}}Z_{j}%
+\sum\nolimits_{r=1}^{j-2}\gamma_{r}Z_{r}\right)  \Big],
\end{align*}
in the case $j\leq J,$ and%
\[
\frac{1}{N}\sum_{i=1}^{N}\hat{m}_{i}^{\left(  J+1\right)  2}\simeq
E\operatorname*{Th}\nolimits^{2}\left(  \sqrt{q-\Gamma_{J-1}^{2}}Z_{J}%
+\sum\nolimits_{r=j}^{J-1}\gamma_{r}Z_{r}+\sum\nolimits_{r=1}^{j-1}\gamma
_{r}Z_{r}\right)  .
\]
For the latter case, the right hand side is simply $q.$ For the case $j\leq
J,$ we can rewrite the expression on the right hand side as%
\begin{equation}
E\operatorname*{Th}\left(  \sqrt{q-\Gamma_{j-1}^{2}}Z^{\prime\prime}%
+\gamma_{j-1}Z^{\prime}+\Gamma_{j-2}Z\right)  \operatorname*{Th}\left(
\sqrt{q-\Gamma_{j-2}^{2}}Z^{\prime}+\Gamma_{j-2}Z\right)  .\label{repr1}%
\end{equation}
We represent%
\begin{align*}
\sqrt{q-\Gamma_{j-1}^{2}}Z^{\prime\prime}+\gamma_{j-1}Z^{\prime}  &
=aZ_{1}+bZ_{2}\\
\sqrt{q-\Gamma_{j-2}^{2}}Z^{\prime}  & =aZ_{1}+bZ_{3}.
\end{align*}
Solving, we get $a^{2}+b^{2}=q-\Gamma_{j-2}^{2},$ and%
\[
a^{2}=\gamma_{j-1}\sqrt{q-\Gamma_{j-2}^{2}}.
\]
Using this, we get that (\ref{repr1}) equals%
\[
E\operatorname*{Th}\left(  \Gamma_{j-2}Z+aZ_{1}+bZ_{2}\right)
\operatorname*{Th}\left(  \Gamma_{j-2}+aZ_{1}+bZ_{2}\right)  =\psi\left(
\Gamma_{j-2}^{2}+a^{2}\right)  .
\]%
\[
\Gamma_{j-2}^{2}+a^{2}=\Gamma_{j-2}^{2}+\gamma_{j-1}\sqrt{q-\Gamma_{j-2}^{2}%
}=\rho_{j-1}.
\]
Therefore, for $j\leq J,$ we get%
\[
\frac{1}{N}\sum_{i=1}^{N}\hat{m}_{i}^{\left(  J+1\right)  }\hat{m}%
_{i}^{\left(  j\right)  }\simeq\psi\left(  \rho_{j-1}\right)  =\rho_{j}.
\]

\end{proof}

\appendix

\section{Appendix}

\begin{lemma}
\label{Le_Gauss_App1}Let $\mathbf{\zeta}=\left(  \zeta_{i}\right)
_{i=1,\ldots,N}$ be Gaussian vectors with $\sup_{N,i}\mathbb{E}\left(
\zeta_{i}^{2}\right)  <\infty,$ and $\sup_{N,i\neq j}N\left\vert
\mathbb{E}\left(  \zeta_{i}\zeta_{j}\right)  \right\vert <\infty.$ Then there
exist $K,C>0$ such that%
\begin{equation}
\mathbb{P}\left(  \frac{1}{N}\sum\nolimits_{i=1}^{N}\left\vert \zeta
_{i}\right\vert \geq K\right)  \leq C\exp\left[  -N/C\right] \label{LD1}%
\end{equation}
and%
\begin{equation}
\mathbb{P}\left(  \frac{1}{N}\sum\nolimits_{i=1}^{N}\zeta_{i}^{2}\geq
K\right)  \leq C\exp\left[  -N/C\right]  .\label{LD2}%
\end{equation}

\end{lemma}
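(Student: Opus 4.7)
The strategy is a straightforward application of Gaussian concentration, after first establishing that the covariance matrix $\Sigma$ of $\mathbf{\zeta}$ has operator norm bounded uniformly in $N$. Let $C_{1}\overset{\mathrm{def}}{=}\sup_{N,i}\mathbb{E}\zeta_{i}^{2}$ and $c\overset{\mathrm{def}}{=}\sup_{N,i\neq j}N|\mathbb{E}\zeta_{i}\zeta_{j}|$. For any unit vector $x\in\mathbb{R}^{N}$, split $\langle x,\Sigma x\rangle$ into its diagonal and off-diagonal parts; the diagonal contributes at most $C_{1}\sum x_{i}^{2}=C_{1}$, while for the off-diagonal one uses $|\mathbb{E}\zeta_{i}\zeta_{j}|\leq c/N$ together with $(\sum|x_{i}|)^{2}\leq N\sum x_{i}^{2}=N$, giving at most $c$. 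Hence $\|\Sigma\|_{\mathrm{op}}\leq C_{1}+c$ independently of $N$.

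For (\ref{LD1}), consider $f(\mathbf{\zeta})\overset{\mathrm{def}}{=}\frac{1}{N}\sum_{i=1}^{N}|\zeta_{i}|$. Since $||\zeta_{i}|-|\zeta_{i}'||\leq|\zeta_{i}-\zeta_{i}'|$ and Cauchy--Schwarz, $f$ is Lipschitz with constant $1/\sqrt{N}$ in the Euclidean norm. By Jensen,
\[
\mathbb{E}f(\mathbf{\zeta})\leq\frac{1}{N}\sum_{i=1}^{N}\sqrt{\mathbb{E}\zeta_{i}^{2}}\leq\sqrt{C_{1}}.
\]
The standard Gaussian concentration inequality (see e.g. Borell's inequality) then yields
\[
\mathbb{P}\bigl(|f(\mathbf{\zeta})-\mathbb{E}f(\mathbf{\zeta})|\geq t\bigr)\leq 2\exp\bigl[-Nt^{2}/(2(C_{1}+c))\bigr],
\]
and choosing $K>\sqrt{C_{1}}$ (say $K=2\sqrt{C_{1}}$) gives (\ref{LD1}) with a bound of the form $2\exp[-N/C]$.

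For (\ref{LD2}), I apply the same idea to the functional $g(\mathbf{\zeta})\overset{\mathrm{def}}{=}\|\mathbf{\zeta}\|_{2}/\sqrt{N}=\bigl(\frac{1}{N}\sum\zeta_{i}^{2}\bigr)^{1/2}$. This $g$ is again $1/\sqrt{N}$-Lipschitz (triangle inequality for $\|\cdot\|_{2}$), and
\[
\mathbb{E}g(\mathbf{\zeta})\leq\sqrt{\mathbb{E}g(\mathbf{\zeta})^{2}}=\sqrt{\frac{1}{N}\sum_{i=1}^{N}\mathbb{E}\zeta_{i}^{2}}\leq\sqrt{C_{1}}.
\]
Gaussian concentration gives an identical tail bound for $g$, and since $\frac{1}{N}\sum\zeta_{i}^{2}\geq K$ is equivalent to $g(\mathbf{\zeta})\geq\sqrt{K}$, choosing $K$ large (e.g. $K=4C_{1}$) yields (\ref{LD2}).

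There is really no substantive obstacle here: the only point requiring a moment's thought is the uniform bound on $\|\Sigma\|_{\mathrm{op}}$, which is precisely where the two hypotheses $\sup\mathbb{E}\zeta_{i}^{2}<\infty$ and $\sup N|\mathbb{E}\zeta_{i}\zeta_{j}|<\infty$ enter, with the $1/N$ scaling in the second hypothesis exactly compensating the $N^{2}$ off-diagonal entries of the matrix.
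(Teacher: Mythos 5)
Your proof is correct, but it runs along a genuinely different route from the paper's. The paper first adds independent Gaussian noise to normalize all variances to $1$, writes the covariance as $I+\varepsilon$ with $\vert\varepsilon_{ij}\vert\leq 1/4N$, takes the symmetric square root $I+\alpha=\sqrt{I+\varepsilon}$ with the entrywise bound $\sup_{ij}\vert\alpha_{ij}\vert\leq C/N$, and then represents $\zeta_{i}=Z_{i}+\sum_{j}\alpha_{ij}Z_{j}$ with i.i.d.\ standard Gaussians $Z_{i}$; the bounds (\ref{LD1}) and (\ref{LD2}) are then reduced to elementary large-deviation estimates for $\frac{1}{N}\sum\vert Z_{i}\vert$ and $\frac{1}{N}\sum Z_{i}^{2}$. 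You instead bound the operator norm, $\Vert\Sigma\Vert_{\mathrm{op}}\leq C_{1}+c$ uniformly in $N$ (using exactly the two hypotheses, with the off-diagonal handled by $(\sum\vert x_{i}\vert)^{2}\leq N$), and then apply Borell-type Gaussian concentration to the $1/\sqrt{N}$-Lipschitz functionals $\frac{1}{N}\sum\vert\zeta_{i}\vert$ and $(\frac{1}{N}\sum\zeta_{i}^{2})^{1/2}$, together with the Jensen bounds on their means. Your argument is shorter and sidesteps the entrywise control of the matrix square root, which is the one step the paper asserts with little justification; the paper's explicit i.i.d.-plus-small-perturbation representation is more hands-on and needs only one-dimensional Gaussian tail estimates rather than the isoperimetric concentration inequality (which the paper does invoke, but only later, in Lemma \ref{Le_MainLLN}). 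One cosmetic remark: your choices $K=2\sqrt{C_{1}}$ and $K=4C_{1}$ silently assume $C_{1}>0$; in the degenerate case $C_{1}=0$ the vector vanishes almost surely and the claim is trivial, so nothing is lost, but it is worth a word.
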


\begin{proof}
We can multiply the $\zeta_{i}$ by a fixed positive real number. Therefore, we
may assume that $\sup_{N,i\neq j}N\left\vert \mathbb{E}\left(  \zeta_{i}%
\zeta_{j}\right)  \right\vert \leq1/4,\ \sup_{N,i}\mathbb{E}\left(  \zeta
_{i}^{2}\right)  \leq1.$ Put $\alpha_{i}\overset{\mathrm{def}}{=}%
1-\mathbb{E}\left(  \zeta_{i}^{2}\right)  ,$ and choose independent Gaussians
$U_{i}$ with $\mathbb{E}U_{i}^{2}=\alpha_{i}.$ If we prove the statements
(\ref{LD1}) and (\ref{LD2}) for the sequence $\left\{  \zeta_{i}%
+U_{i}\right\}  ,$ then it follows for the $\zeta_{i}$ itself, simply because
(\ref{LD1}) and (\ref{LD2}) hold for the $U_{i}.$ Therefore we may assume that
$\mathbb{E}\left(  \zeta_{i}^{2}\right)  =1,$ and $\left\vert \mathbb{E}%
\left(  \zeta_{i}\zeta_{j}\right)  \right\vert \leq1/4N$ for $i\neq j.$ Write
$\Sigma$ for the covariance matrix of $\left\{  \zeta_{i}\right\}  .$
$\Sigma=I+\mathbf{\varepsilon},$ where $\left\vert \varepsilon_{ij}\right\vert
\leq1/4N.$ Taking the symmetric square root%
\[
I+\mathbf{\alpha}=\sqrt{I+\mathbf{\varepsilon}},
\]
then $\sup_{i,j\leq N}\left\vert \alpha_{ij}\right\vert \leq C/N.$ Therefore,
we can represent the $\zeta_{i}$ as%
\[
\zeta_{i}=Z_{i}+\sum_{j}\alpha_{ij}Z_{j}%
\]
where the $Z_{i}$ are i.i.d. standard Gaussians$.$ Then%
\[
\mathbb{P}\left(  \frac{1}{N}\sum\nolimits_{i=1}^{N}\left\vert \zeta
_{i}\right\vert \geq K\right)  \leq\mathbb{P}\left(  \frac{1}{N}%
\sum\nolimits_{i=1}^{N}\left\vert Z_{i}\right\vert \geq K/2\right)
+\mathbb{P}\left(  \frac{1}{N}\sum\nolimits_{i=1}^{N}\left\vert Z_{i}%
\right\vert \geq\sqrt{K/2C}\right)  .
\]
By choosing $K$ appropriate, we get the desired estimate.

To prove (\ref{LD2}), we use the same representation. As%
\begin{align*}
\frac{1}{N}\sum\nolimits_{i=1}^{N}\zeta_{i}^{2}  & \leq\frac{2}{N}%
\sum\nolimits_{i=1}^{N}Z_{i}^{2}+\frac{2}{N}\sum\nolimits_{i=1}^{N}\left(
\sum\nolimits_{j}\alpha_{ij}Z_{j}\right)  ^{2}\\
& \leq\frac{2}{N}\sum\nolimits_{i=1}^{N}Z_{i}^{2}+\frac{C}{N}\left(
\sum\nolimits_{i=1}^{N}\left\vert Z_{i}\right\vert \right)  ^{2}%
\end{align*}
and%
\[
\mathbb{P}\left(  \frac{1}{N}\sum\nolimits_{i=1}^{N}Z_{i}^{2}\geq K\right)
\leq C\exp\left[  -N/C\right]
\]
for large enough $K,$ we get the desired conclusion.
\end{proof}

\begin{corollary}
\label{Le_LD} Assume $\operatorname{COND}\left(  J\right)  $ and $k\leq J.$

\begin{enumerate}
\item[a)] For any $m\leq k$ there exist $C,K>0$ such that%
\[
\mathbb{P}\left(  \frac{1}{N}\sum\nolimits_{i}\left\vert \xi_{i}^{\left(
m\right)  }\right\vert \geq K\right)  \leq C\exp\left[  -N/C\right]  .
\]

\item[b)] For any $m,l,$ there exist $C,K>0$ such that%
\[
\mathbb{P}\left(  \frac{1}{N}\sum\nolimits_{i}\left\vert \xi_{i}^{\left(
m\right)  }\xi_{i}^{\left(  l\right)  }\right\vert \geq K\right)  \leq
C\exp\left[  -N/C\right]  .
\]

\item[c)] If $Y_{i}$ are $\mathcal{F}_{m-1}$-measurable with%
\[
\mathbb{P}\left(  \sup\nolimits_{i}\left\vert Y_{i}\right\vert \geq K\right)
\leq C\exp\left[  -N/C\right]
\]
for some $K,$ then%
\[
\mathbb{P}\left(  \left\vert \left\langle \mathbf{\xi}^{\left(  m\right)
},\mathbf{Y}\right\rangle \right\vert \geq t\right)  \leq C\exp\left[
-t^{2}N/C\right]  ,\ t\leq1.
\]

\end{enumerate}
\end{corollary}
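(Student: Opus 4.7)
The plan is to exploit the conditional Gaussianity of $\mathbf{\xi}^{(m)}$. By construction (property (C1')), conditional on $\mathcal{F}_{m-2}$ the matrix $\mathbf{g}^{(m)}$ is Gaussian and independent of $\mathcal{F}_{m-1}$; since $\mathbf{\phi}^{(m)}$ is $\mathcal{F}_{m-1}$-measurable, the vector $\mathbf{\xi}^{(m)}=\mathbf{g}^{(m)}\mathbf{\phi}^{(m)}$ is therefore Gaussian conditionally on $\mathcal{F}_{m-1}$, with covariances described by parts a) and b) of Lemma \ref{Le_G1}. In all three parts, I would condition on $\mathcal{F}_{m-1}$, restrict to the event $A_J$ from (\ref{AJ_Def}) on which the $\mathbf{\phi}^{(k)}$, $k\leq J$, are uniformly bounded, and use (\ref{AJ_est}) to absorb the complementary event into the exp-small error.

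For part a), on this good event Lemma \ref{Le_G1} gives $\mathbb{E}_{m-1}(\xi_i^{(m)2})\leq 2$ and $N|\mathbb{E}_{m-1}(\xi_i^{(m)}\xi_j^{(m)})|\leq C$ for $i\neq j$, which are precisely the hypotheses of Lemma \ref{Le_Gauss_App1}. Applying (\ref{LD1}) conditionally on $\mathcal{F}_{m-1}$ and integrating yields a). Part b) is then immediate from $|\xi_i^{(m)}\xi_i^{(l)}|\leq \tfrac{1}{2}(\xi_i^{(m)2}+\xi_i^{(l)2})$ combined with (\ref{LD2}) applied separately to $\mathbf{\xi}^{(m)}$ and $\mathbf{\xi}^{(l)}$.

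For part c), conditional on $\mathcal{F}_{m-1}$ the variable $\langle \mathbf{\xi}^{(m)},\mathbf{Y}\rangle=\tfrac{1}{N}\sum_i Y_i\xi_i^{(m)}$ is a deterministic linear combination of a Gaussian vector, hence centred Gaussian with conditional variance
\[
\sigma^{2} \;=\; \frac{1}{N^{2}}\sum_{i,j} Y_i Y_j\,\mathbb{E}_{m-1}\bigl(\xi_i^{(m)}\xi_j^{(m)}\bigr).
\]
Using $\sup_i|Y_i|\leq K$ and the boundedness of the $\mathbf{\phi}^{(r)}$ on $A_J$, the diagonal contribution is at most $CK^{2}/N$, and the off-diagonal contribution, after substituting the expression from Lemma \ref{Le_G1} b), reduces to terms of the form $N^{-3}\bigl(\sum_i Y_i\phi_i^{(r)}\bigr)^{2}$, each of which is also $O(1/N)$ by the uniform bounds. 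Thus $\sigma^{2}\leq C/N$ on the good event, and the standard Gaussian tail bound gives $\mathbb{P}\bigl(|\langle \mathbf{\xi}^{(m)},\mathbf{Y}\rangle|\geq t\mid \mathcal{F}_{m-1}\bigr)\leq 2\exp[-Nt^{2}/C]$. Adding the probability of the good event's complement finishes c).

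The main subtlety to keep track of is that the ``$O_{m-1}$'' error terms in Lemma \ref{Le_G1} are themselves random, not deterministic, and must be controlled on a further good event before the deterministic Gaussian argument above is legal; however, by the very definition of the $O_{k}$-notation these events have complement of probability at most $C\exp[-N/C]$, so they can be merged with $A_J$ without degrading the final estimate. All other manipulations are routine consequences of Gaussian concentration and the orthogonality identity $\sum_s \phi_s^{(m)}\xi_s^{(k)}=0$ for $m<k$ already used in the proof of Lemma \ref{Le_G1} b).
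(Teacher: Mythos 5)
Your proposal is correct and follows essentially the same route as the paper: condition on $\mathcal{F}_{m-1}$, restrict to a high-probability event on which the covariance estimates of Lemma \ref{Le_G1} give bounded variances and off-diagonal correlations of order $1/N$, then invoke Lemma \ref{Le_Gauss_App1} for a) and b) and a direct Gaussian tail bound for the conditionally Gaussian variable $\left\langle \mathbf{\xi}^{(m)},\mathbf{Y}\right\rangle$ (whose conditional variance is $O(1/N)$) for c). The only cosmetic difference is that for b) you reduce to part (\ref{LD2}) via $\left\vert \xi_i^{(m)}\xi_i^{(l)}\right\vert \leq \tfrac12\bigl(\xi_i^{(m)2}+\xi_i^{(l)2}\bigr)$ while the paper uses Cauchy--Schwarz, which is immaterial.
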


\begin{proof}
Conditioned on $\mathcal{F}_{m-1},\ \mathbf{\xi}^{\left(  m\right)  }$ is
Gaussian with covariances given by Lemma \ref{Le_G1}. On $\mathcal{F}_{m-1}%
$-measurable events $B_{N}$ with $\mathbb{P}\left(  B_{N}\right)  \geq
1-C\exp\left[  -N/C\right]  ,$ the variables appearing in this lemma on the
right hand sides are appropriately bounded. So, on $B_{N},$ the $\xi
_{i}^{\left(  m\right)  }$ are Gaussians which satisfy the conditions of the
previous lemma. So a) follows from that lemma. For b), we estimate%
\[
\frac{1}{N}\sum\nolimits_{i}\left\vert \xi_{i}^{\left(  m\right)  }\xi
_{i}^{\left(  l\right)  }\right\vert \leq\sqrt{\frac{1}{N}\sum\nolimits_{i}%
\xi_{i}^{\left(  m\right)  2}}\sqrt{\frac{1}{N}\sum\nolimits_{i}\xi
_{i}^{\left(  l\right)  2}},
\]
so that we see that it suffices to consider $l=m.$ Then we apply the lemma,
part b).

As for c), we have that the conditional distribution of $\sqrt{N}\left\langle
\mathbf{\xi}^{\left(  m\right)  },\mathbf{Y}\right\rangle $, given
$\mathcal{F}_{m-1},$ is Gaussian, with bounded variance. So the statement follows.
\end{proof}

\begin{lemma}
\label{Le_MainLLN}Let $\left\{  \eta_{i}^{\left(  N\right)  }\right\}  _{i\leq
N},$ be Gaussian vectors with $\sigma_{ij}^{\left(  N\right)  }=\mathbb{E}%
\eta_{i}^{\left(  N\right)  }\eta_{j}^{\left(  N\right)  }.$ We assume that
for some sequence $\mu_{N}>0$ with $\log\mu_{N}$ being bounded, one has%
\[
\left\vert \sigma_{ii}^{\left(  N\right)  }-\mu_{N}\right\vert \leq C/N,
\]
and there are vectors $\left\{  x_{i}^{\left(  N\right)  }\right\}  _{i\leq
N},~\left\{  y_{i}^{\left(  r,N\right)  }\right\}  _{i\leq N,\ r\leq m},\ m$
fixed, which are bounded in all indices, such that%
\[
\sup_{i\neq j,N}N^{2}\left\vert \sigma_{ij}^{\left(  N\right)  }-\frac
{x_{i}^{\left(  N\right)  }x_{j}^{\left(  N\right)  }}{N}+\sum\nolimits_{r=1}%
^{m}\frac{y_{i}^{\left(  N,r\right)  }y_{j}^{\left(  N,r\right)  }}%
{N}\right\vert <\infty.
\]
Let also $F_{N,i}$,\ $i\leq N,$ be functions $\mathbb{R\rightarrow R},$ which
are bounded and Lipshitz, uniformly in $N,i.$ Then%
\[
\frac{1}{N}\sum_{i=1}^{N}F_{N,i}\left(  \eta_{i}^{\left(  N\right)  }\right)
\simeq\frac{1}{N}\sum_{i=1}^{N}EF_{N,i}\left(  \sqrt{\mu_{N}}Z\right)  .
\]

\end{lemma}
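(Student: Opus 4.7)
The plan is to combine Gaussian concentration (for the fluctuation part) with a simple one-dimensional Lipschitz argument (for the mean part).

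First I would control the mean. Since $F_{N,i}$ is uniformly Lipschitz, and $\eta_i^{(N)} \sim \mathcal{N}(0,\sigma_{ii}^{(N)})$ with $|\sigma_{ii}^{(N)}-\mu_N|\le C/N$, and $\mu_N$ is bounded and bounded away from $0$ (because $\log \mu_N$ is bounded), we have $|\sqrt{\sigma_{ii}^{(N)}}-\sqrt{\mu_N}|=O(1/N)$ uniformly in $i$. Writing $\eta_i^{(N)} \stackrel{d}{=} \sqrt{\sigma_{ii}^{(N)}}\,Z$, the uniform Lipschitz bound gives
\[
\bigl|\mathbb{E}F_{N,i}(\eta_i^{(N)})-\mathbb{E}F_{N,i}(\sqrt{\mu_N}Z)\bigr| \le L\, E|Z|\cdot |\sqrt{\sigma_{ii}^{(N)}}-\sqrt{\mu_N}| = O(1/N),
\]
so averaging over $i$ yields $\frac1N\sum_i \mathbb{E}F_{N,i}(\eta_i^{(N)}) - \frac1N\sum_i \mathbb{E}F_{N,i}(\sqrt{\mu_N}Z)=O(1/N)$.

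Next I would handle the fluctuations via the Gaussian concentration inequality for Lipschitz functions. Set $G(\eta) := \frac1N\sum_i F_{N,i}(\eta_i)$. With respect to the Euclidean norm on $\mathbb R^N$, $G$ is Lipschitz with constant $L/\sqrt N$, since $|G(\eta)-G(\eta')|\le \frac{L}{N}\sum_i|\eta_i-\eta_i'|\le \frac{L}{\sqrt N}\|\eta-\eta'\|_2$. For a centered Gaussian vector with covariance $\Sigma^{(N)}$, the concentration inequality gives
\[
\mathbb{P}\bigl(|G(\eta)-\mathbb{E}G(\eta)|\ge t\bigr)\le 2\exp\bigl(-t^2 N/(2L^2\|\Sigma^{(N)}\|_{\mathrm{op}})\bigr).
\]
So everything reduces to bounding $\|\Sigma^{(N)}\|_{\mathrm{op}}$ uniformly in $N$.

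For the operator norm, I would decompose
\[
\Sigma^{(N)} = \mu_N I + D^{(N)} + \frac{1}{N}\mathbf{x}^{(N)}(\mathbf{x}^{(N)})^T - \sum_{r=1}^m \frac{1}{N}\mathbf{y}^{(r,N)}(\mathbf{y}^{(r,N)})^T + E^{(N)},
\]
where $D^{(N)}$ is diagonal with entries $O(1/N)$ (from the hypothesis on $\sigma_{ii}^{(N)}$, after subtracting the diagonal contributions of the rank-one pieces, which are also $O(1/N)$), and $E^{(N)}$ has zero diagonal and off-diagonal entries of size $O(1/N^2)$. Each rank-one piece has operator norm $\frac{1}{N}\|\mathbf{x}^{(N)}\|_2^2 = O(1)$ because the coordinates are uniformly bounded, the diagonal $D^{(N)}$ has operator norm $O(1/N)$, and $E^{(N)}$ has Frobenius norm at most $\sqrt{N^2\cdot O(1/N^4)}=O(1/N)$, hence operator norm $O(1/N)$. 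Thus $\|\Sigma^{(N)}\|_{\mathrm{op}}\le C$ uniformly in $N$, and plugging into the concentration bound gives $\mathbb P(|G(\eta)-\mathbb E G(\eta)|\ge t)\le C\exp(-t^2 N/C)$ for $0<t\le 1$, which is precisely the definition of $G(\eta)\simeq \mathbb E G(\eta)$. Combined with the mean estimate from the first paragraph, the lemma follows.

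The only slightly delicate step is the operator norm bound; everything else is a textbook application of Gaussian concentration and a one-dimensional Lipschitz comparison. The bookkeeping with the $D^{(N)}$ piece (absorbing the diagonal part of the rank-one corrections that were only stated for $i\ne j$) is the sort of thing one has to write carefully, but it poses no real difficulty because all the corrections are of order $1/N$.
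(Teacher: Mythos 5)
Your argument is correct, and it takes a genuinely different route from the paper. The paper proceeds by explicit decomposition of the Gaussian vector: it adds small independent corrections $\sum_r \frac{y_i^{(r)}}{\sqrt N}Z_r + \sqrt{K}\frac{Z_i'}{\sqrt N}$ to $\eta_i$ to cancel the negative off-diagonal contribution and make the residual covariance positive definite, then represents the modified vector as $\sqrt{\mu_N}Z_i + \frac{x_i}{\sqrt N}Z + U_i$ with $U_i$ a small independent Gaussian, peels off the small pieces one by one using Lemma \ref{Le_Gauss_App1}, and only at the very last step (for the i.i.d. part $\sqrt{\mu_N}Z_i$) invokes Gaussian isoperimetry. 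You instead apply Gaussian concentration directly to the correlated vector, which requires nothing beyond a uniform bound on $\|\Sigma^{(N)}\|_{\mathrm{op}}$; the structural hypotheses enter only through the easy decomposition $\Sigma = \mu_N I + D + \frac1N xx^T - \sum_r\frac1N y^{(r)}(y^{(r)})^T + E$ with $\|D\|,\|E\|=O(1/N)$ and the rank-one pieces $O(1)$. Your route is shorter and conceptually cleaner, and it isolates more sharply what the covariance hypotheses are really for (just operator-norm boundedness plus diagonal closeness to $\mu_N$); the paper's route is more hands-on and yields, as a byproduct, an explicit almost-i.i.d.\ representation of $\eta$ that it reuses elsewhere. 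Both the operator-norm bound (via triangle inequality and the Frobenius estimate for $E$) and the mean comparison (one-dimensional Lipschitz plus $|\sqrt{\sigma_{ii}}-\sqrt{\mu_N}|=O(1/N)$, which uses that $\mu_N$ is bounded away from $0$) are carried out correctly, so the proof is complete.
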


\begin{proof}
We leave out $N$ in notations, as often as possible. Consider%
\[
\eta_{i}^{\prime}\overset{\mathrm{def}}{=}\eta_{i}+\sum_{r=1}^{m}\frac
{y_{i}^{\left(  r\right)  }}{\sqrt{N}}Z_{r}+\sqrt{K}\frac{Z_{i}^{\prime}%
}{\sqrt{N}}.
\]
The constant $K>0$ will be specified below. Then%
\begin{align*}
& \left\vert \frac{1}{N}\sum\nolimits_{i=1}^{N}F_{N,i}\left(  \eta_{i}\right)
-\frac{1}{N}\sum\nolimits_{i=1}^{N}F_{N,i}\left(  \eta_{i}^{\prime}\right)
\right\vert \\
& \leq Lc\sum_{r=1}^{m}\frac{1}{\sqrt{N}}\left\vert Z_{r}\right\vert
+L\frac{\sqrt{K}}{N^{3/2}}\sum_{i=1}^{N}\left\vert Z_{i}^{\prime}\right\vert ,
\end{align*}
where $L$ is a bound on the Lipshitz constants for the $F_{N,i},$ and $c$ is a
bound of the $\left\vert y_{i}^{\left(  r\right)  }\right\vert .$

As%
\begin{equation}
P\left(  \left\vert Z_{r}\right\vert \geq t\sqrt{N}\right)  \leq C\exp\left[
-t^{2}N/C\right]  ,\label{AppB_Est1}%
\end{equation}%
\[
P\left(  \frac{1}{N}\sum\nolimits_{i=1}^{N}\left\vert Z_{i}^{\prime
}\right\vert \geq t\sqrt{N}\right)  \leq C\exp\left[  -t^{2}N/C\right]  ,
\]
we get%
\[
\frac{1}{N}\sum_{i=1}^{N}F_{N,i}\left(  \eta_{i}\right)  \simeq\frac{1}{N}%
\sum_{i=1}^{N}F_{N,i}\left(  \eta_{i}^{\prime}\right)  .
\]%
\begin{align*}
E\left(  \eta_{i}^{\prime2}\right)   & =\mu_{N}+\delta_{i}+\sum_{r=1}^{m}%
\frac{y_{i}^{\left(  r\right)  2}}{N}+\frac{K}{N},\\
E\left(  \eta_{i}^{\prime}\eta_{j}^{\prime}\right)   & =\frac{x_{i}x_{j}}%
{N}+r_{ij},\ i\neq j.
\end{align*}
where%
\[
\delta_{i}\overset{\mathrm{def}}{=}\sigma_{ii}-\mu_{N},
\]%
\[
r_{ij}\overset{\mathrm{def}}{=}\sigma_{ij}-\frac{x_{i}x_{j}}{N}+\sum
\nolimits_{r=1}^{m}\frac{y_{i}^{\left(  r\right)  }y_{j}^{\left(  r\right)  }%
}{N}%
\]
We choose $K$ large enough such that the $N\times N$-matrix $\Gamma$ which is
$\left(  r_{ij}\right)  $ off diagonal, and%
\[
\sum\nolimits_{r=1}^{m}\frac{y_{i}^{\left(  r\right)  2}}{N}+\frac{K}{N}%
-\frac{x_{i}^{2}}{N}+\delta_{i}%
\]
on the diagonal is positive definite. This is possible as $\left\vert
r_{ij}\right\vert \leq CN^{-2}.$

Let $\left\{  U_{i}\right\}  $ be a Gaussian matrix with covariance matrix
$\Gamma.$ Then%
\[
\sqrt{\mu_{N}}Z_{i}+\frac{x_{i}}{\sqrt{N}}Z+U_{i}%
\]
has the same distribution as $\left\{  \eta_{i}^{\prime}\right\}  .$ Here we
assume that $\left\{  U_{i}\right\}  $ is independent of the $Z$'s. So, we
assume that the $\eta_{i}^{\prime}$ are presented in this way.%
\[
\left\vert \frac{1}{N}\sum\nolimits_{i=1}^{N}F_{N,i}\left(  \eta_{i}^{\prime
}\right)  -\frac{1}{N}\sum\nolimits_{i=1}^{N}F_{N,i}\left(  \sqrt{\mu_{N}%
}Z_{i}\right)  \right\vert \leq CL\frac{\left\vert Z\right\vert }{\sqrt{N}%
}+L\frac{1}{N}\sum_{i=1}^{N}\left\vert U_{i}\right\vert .
\]
We can apply Lemma \ref{Le_Gauss_App1} to the vector $\left(  \sqrt{N}%
U_{i}\right)  _{1\leq i\leq N}$, and (\ref{AppB_Est1}) to the first summand on
the right-hand side, obtaining%
\[
\frac{1}{N}\sum\nolimits_{i=1}^{N}F_{N,i}\left(  \eta_{i}^{\prime}\right)
\simeq\frac{1}{N}\sum\nolimits_{i=1}^{N}F_{N,i}\left(  \sqrt{\mu_{N}}%
Z_{i}\right)
\]%
\[
\frac{1}{N}\sum\nolimits_{i=1}^{N}F_{N,i}\left(  \sqrt{\mu_{N}}Z_{i}\right)
\simeq\frac{1}{N}\sum\nolimits_{i=1}^{N}EF_{N,i}\left(  \sqrt{\mu_{N}%
}Z\right)  ,
\]
follows by standard Gaussian isoperimetry (see e.g. \cite{LeTala}).
\end{proof}

\end{document}